\newtheorem{theorem}{Theorem}[section]
\newtheorem{lemma}[theorem]{Lemma}
\numberwithin{equation}{section}
\newtheorem{Theorem}{Theorem}[section]
\newtheorem{thm}[Theorem]{Theorem}
\newtheorem{proposition}[Theorem]{Proposition}
\newtheorem{prop}[Theorem]{Proposition}
\newtheorem{cor}[Theorem]{Corollary}
\newtheorem{claim}{Claim}
\theoremstyle{definition}
\newtheorem{definition}[Theorem]{Definition}
\newcommand{\Imp}{\mbox{$\Longrightarrow$}}
\newcommand{\Iff}{\mbox{$\Longleftrightarrow$}}
\def\proof{{\bf Proof.}\ }
\def\ie{\emph{i.e.}}
\def\pes{\emph{e.g.}}
\def\Pes{\emph{E.g.}}
\def\F{{\mathcal F}}
\def\L{\mathcal{L}}
\def\P{{\mathcal P}}
\def\RR{{\mathcal{R}}}
\def\RP{{\mathcal{R}^{+}}}
\def\U{{\mathcal U}}
\def\V{{\mathcal V}}
\def\QQ{{\mathcal Q}}
\def\EE{{\mathbb E}}
\def\N{{\mathbb N}}
\def\Q{{\mathbb Q}}
\def\Z{{\mathbb Z}}
\def\SSU{{\mathbb{S}}_{\U}}
\def\TTU{{\mathbb{T}}_{\U}}
\def\Pp{{\mathbb{P}}}
\def\WW{{\mathbf{F}}}
\def\TT{{\mathbf{t}}}
\def\II{{\mathbb{I}}}
\def\f{\widetilde{f}}
	\def\B{\mathcal{B}}
\def\fu{\fg_{\,\U}}
\def\ag{{\alpha}}
\def\dg{{\delta}}
\def\eg{{\varepsilon}}
\def\fg{{\varphi}}
\def\og{{\omega}}
\def\sg{{\sigma}}
\def\Sg{{\Sigma}}
\def\cov{\mbox{\bf cov}}
\def\+#1{\vec{#1}}
\def\eb{{\mathbf{e}}}
\def\ck{{\mathfrak{c}}}
\def\xk{{\mathfrak{x}}}
\def\yk{{\mathfrak{y}}}
\def\zk{{\mathfrak{z}}}
\def\wk{{\mathfrak{w}}}
\def\nk{{\mathfrak{n}}}
\def\nku{{\mathfrak{n}}_{\U}}
\def\Nku{{\mathfrak{N}}_{\U}}
\def\dk{{\mathfrak{d}}}
\def\Ik{{\mathfrak{I}}}
\def\Kk{{\mathfrak{K}}}
\def\Sk{{\mathfrak{S}}}
\def\ng{{\mathfrak{n}}}
\def\Ng{{\mathfrak{N}}}
\def\Nk{{\mathfrak{N}}}
\def\Rk{{\mathfrak{R}}}
\def\Rg{{\mathfrak{R}}}
\def\*{\times}
\def\0{\emptyset}
\def\/{\setminus}
\def\_{\overline}
\def\eq{\approx}
\def\<{\prec}
\def\Equ{\equiv_{\U}}
\def\ult#1#2{^{#1}_{\; #2}}
\def\incl{\subseteq}
\def\pincl{\subset}
\def\la{\langle}
\def\ra{\rangle}
\def\equ{\approx_{\U}}
\def\zfc{\textsf{ZFC}}
\def\CH{\textsf{CH}}
\def\qed{${}$\hfill $\Box$}
\def\dim{\textbf{Proof.}}
\def\SZ{\Z\la\!\la \TT\ra\!\ra}
\def\Sud{\Z\la\!\la t_{0},t_{1}\ra\!\ra}
\def\etc{\textit{etc.}}
\def\fu{F_{\U}}
\def\euno{\textsf{(E1)}}
\def\edue{\textsf{(E2)}}
\def\etre{\textsf{(E3)}}
\def\ecinque{\textsf{(E4)}}
\def\easy{\textsf{(E0)}}
\newenvironment{lsnum}{\begin{enumerate}}{\end{enumerate}}
\newenvironment{pf}{\begin{proof}}{\end{proof}}
\newcommand{\scr}[1]{\ensuremath{\mathcal {#1}}}
\newcommand{\emp}{\varnothing}
\renewcommand{\phi}{\varphi}
\newcommand{\sq}[1]{\ensuremath{\langle#1\rangle}}
\newcommand{\restr}{\mathop{\upharpoonright}}
\newcommand{\notarrow}{\kern .42em\not\kern -.42em\longrightarrow}
\begin{document}

\title[Quasi-selective ultrafilters and asymptotic numerosities]
{Quasi-selective ultrafilters
\\
and asymptotic numerosities}

 \author{Andreas Blass}
 \address{Mathematics Department, University of Michigan at Ann Arbor, USA. 
\tt{ablass@umich.edu}}
 \thanks{A. Blass has been partially supported by NSF grant DMS-0653696.}

\author{Mauro Di Nasso}
\address{Dipart. di Matematica ``L. Tonelli'',
Universit\`{a} di Pisa, Italy.
\tt{dinasso@dm.unipi.it}} 
\thanks{M. Di Nasso and M. Forti have been supported by MIUR grant 
PRIN2007.} 

\author{Marco Forti}
\address{Dipart. di Matematica Applicata ``U. Dini'',
Universit\`{a} di Pisa, Italy.
\tt{forti@dma.unipi.it}} 

\subjclass[2000]{03E65, 03E05, 03C20}

\begin{abstract}
We isolate a new class of ultrafilters on $\N$, called 
``quasi-selective''  because they are
 intermediate between selective ultrafilters and $P$-points.
(Under the Continuum Hypothesis these three classes are 
distinct.)
The existence of quasi-selective ultrafilters is equivalent
to the existence of ``asymptotic numerosities'' for all sets of tuples
$A\subseteq\N^k$.  Such numerosities are hypernatural numbers that 
generalize finite cardinalities 
to countable point sets. Most notably, they maintain the structure of ordered 
semiring, and, in a precise sense, they allow for a natural extension of 
 asymptotic density to all sequences of tuples of natural numbers.
\end{abstract}

\maketitle

\section*{Introduction}

Special classes of ultrafilters over $\N$ have been
introduced and variously applied in the literature,
starting from the pioneering work by G. Choquet \cite{co1,co2}
in the sixties.

In this paper we introduce a new class of ultrafilters,
namely the \emph{quasi-selective ultrafilters}, as a tool to generate
a good notion of ``equinumerosity'' on the sets of tuples
of natural numbers (or, more generally, on all point sets
$A\subseteq\L^k$ over a countable line $\L$).
By equinumerosity we mean an equivalence relation
that preserves the basic properties of
equipotency for finite sets, including the Euclidean principle that 
``the whole is greater than the part''. More precisely,
we require that -- similarly as finite numbers --
the corresponding numerosities  be the non-negative part
of a discretely ordered ring, where $0$ is the numerosity of the 
empty set,
$1$ is the numerosity of every singleton,
and sums and products of numerosities correspond
to disjoint unions and Cartesian products, respectively.

This idea of numerosities that generalize finite cardinalities
has been recently investigated by V.~Benci, M.~Di Nasso and 
M.~Forti
in a series of papers, starting from \cite{bd}, where a numerosity is 
assigned to each pair $\la A,\ell_{A}\ra$, depending on the (finite-to-one)
``labelling 
function'' $\ell_{A}:A\to\N$. 
The existence of a numerosity function for labelled sets
turns out to be equivalent to the existence of a 
selective ultrafilter.
That research was then continued by investigating a similar 
notion of numerosity  for
sets of arbitrary cardinality, namely: sets of ordinals in \cite{bdf}, 
subsets of a superstructure in \cite{logan}, point sets over the real line 
in \cite{df}. 
A related notion of ``fine density'' for sets of natural 
numbers is introduced and investigated in  \cite{dn}. 
In each of these contexts special classes of 
ultrafilters over large sets naturally arise.

Here we focus on subsets $A\subseteq\N^k$ of tuples
of natural numbers, and we show that the existence
of particularly well-behaved equinumerosity
relations (which we call "asymptotic") for such sets
is equivalent to the existence of another special kind
of ultrafilters, named quasi-selective ultrafilters.
Such ultrafilters may be
of independent interest, because they are closely
related (but not equivalent) to other well-known classes
of ultrafilters that have been extensively considered in the
literature. In fact, on the one hand, all selective 
ultrafilters
are quasi-selective and all quasi-selective ultrafilters are P-points.
 On the other hand, it is consistent that these three classes of 
 ultrafilters are distinct.

\smallskip
The paper is organized as follows. In Section \ref{qsu} we introduce 
the class of quasi-selective ultrafilters on $\N$ and we study their 
properties, in particular their relationships with $P$-points and 
selective ultrafilters. In Section \ref{qsconst}, assuming the 
Continuum Hypothesis, we present a general 
construction of quasi-selective non-selective ultrafilters, that are 
also weakly Ramsey in the sense of \cite{bl74,qswr}.
In Section \ref{equi} we introduce axiomatically a general notion 
of ``equinumerosity'' for sets of tuples of natural numbers. 
 In Section \ref{arit} 
we show that the resulting numerosities, where 
 sum, product and ordering are defined in the standard Cantorian way,
 are the non-negative part of an ordered 
ring. Namely this ring is isomorphic to the quotient of a ring of 
power-series modulo a suitable 
ideal.
In Section \ref{asy} we introduce the special notion of 
``asymptotic'' equinumerosity, which generalizes the fine density of 
\cite{dn}. We show  that there is a one-to-one correspondence between 
asymptotic equinumerosities and quasi-selective ultrafilters, where 
equinumerosity is witnessed by a special class of bijections 
depending on the ultrafilter
(``$\U$-congruences''). 
The corresponding semiring of 
numerosities  is isomorphic to an initial cut of the ultrapower 
$\N\ult{\N}{\U}$. In particular, asymptotic numerosities exist if and 
only if there exist quasi-selective ultrafilters.
Final remarks and open questions are contained in the concluding 
Section \ref{froq}.

\smallskip
In general, we refer to \cite{CK} for definitions and basic facts
concerning ultrafilters, ultrapowers, and nonstandard models, and to 
\cite{bo} for special ultrafilters over $\N$.

\smallskip
The authors are grateful to  Vieri Benci for many useful discussions
and suggestions.

\section{quasi-selective  ultrafilters}\label{qsu}

We denote by $\N$ the set of all \emph{nonnegative integers}, and by 
$\N^{+}$ the subset of all \emph{positive} integers.

Recall that, if $\F$ is a filter on $X$, then two functions $f,g:X\to Y$ 
are called \emph{$\F$-equivalent} if $\{x\in X \mid f(x)=g(x) \}\in\F$. In this 
case we write $f\equiv_{\F}g$.
\begin{definition}\label{qsel}
{A nonprincipal ultrafilter $\U$ on $\N$ is called \emph{quasi-selective}
if every function $f$ such that $f(n)\le n$ for all $n\in\N$ is $\U$-equivalent
to a nondecreasing one.
}
\end{definition}

The name `quasi-selective' recalls one of the characterizations of 
\emph{selective} (or \emph{Ramsey}) ultrafilters (see 
\pes~\cite[Prop.~4.1]{bd}), namely
\begin{itemize}
    \item   \emph{The ultrafilter $\U$ on $\N$ is selective if and only if 
every $f:\N\to\N$ is $\U$-equivalent to a nondecreasing function. } 
\end{itemize}
In particular all selective ultrafilters are quasi-selective.

Let us call  ``interval-to-one'' a function $g:\N\to\N$  such that, 
for all $n$,
$g^{-1}(n)$ is a (possibly infinite) interval of $\N$. 

\begin{prop}\label{int1}
    Let $\U$ be a quasi-selective ultrafilter. Then any partition of 
    $\N$ is an \emph{interval partition} when restricted to a suitable set 
    in $\U$. Hence every 
    $f:\N\to\N$ is 
$\U$-equivalent to an ``interval-to-one'' function.
 In particular all quasi-selective ultrafilters are P-points.

 \end{prop}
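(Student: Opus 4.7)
The plan is to reduce all three assertions to a single application of quasi-selectivity. Given a partition $\{A_n\}_{n\in\N}$ of $\N$, represent it by the function $f:\N\to\N$ with $f(k)=n$ iff $k\in A_n$, and consider the ``leftmost-representative'' function $h(n):=\min f^{-1}(f(n))$. Because $n\in f^{-1}(f(n))$, one has $h(n)\le n$, so Definition \ref{qsel} yields a nondecreasing $\bar h$ with $h\equiv_{\U}\bar h$; setting $B_0:=\{n:h(n)=\bar h(n)\}\in\U$, the restriction of $h$ to $B_0$ is nondecreasing.

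The crucial observation is that $h(m)=h(n)$ iff $f(m)=f(n)$, so $h$ and $f$ induce the very same partition of $\N$. Since a nondecreasing function has interval level sets, the blocks $A_n\cap B_0$ consist of consecutive elements of $B_0$; this proves the first claim. For the second claim, enumerate $B_0=\{b_0<b_1<\dots\}$ and define $g:\N\to\N$ by $g(n):=f(b_i)$ when $b_i\le n<b_{i+1}$, and $g(n):=f(b_0)$ for $n<b_0$. The first claim forces each value of the sequence $(f(b_i))_{i\in\N}$ to occur on a block of consecutive indices, so every preimage $g^{-1}(v)$ is an interval of $\N$; moreover $g=f$ on $B_0$, hence $g\equiv_{\U}f$.

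For the P-point conclusion, suppose towards contradiction that no block $A_n$ belongs to $\U$. By the first claim the family $\{A_n\cap B_0:n\in\N\}$ is an interval partition of $B_0$; but the only way an interval of the countable well-ordered set $B_0$ can be infinite is by being a final tail $\{b_i,b_{i+1},\dots\}$, and any such tail is cofinite in $B_0\in\U$, so it lies in $\U$, forcing its block into $\U$ --- a contradiction. Hence every $A_n\cap B_0$ is finite, and $\U$ is a P-point.

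The subtle step is the passage, in the middle paragraph, from nondecreasingness of $h$ on $B_0$ to the interval structure of the $f$-fibers on $B_0$; this is exactly what dictates the choice of $h$ as the leftmost-representative function rather than, say, $f$ itself or $\min(f(n),n)$, since only this choice simultaneously makes $h$ dominated by the identity and encodes the partition faithfully.
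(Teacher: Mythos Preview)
Your proof is correct and follows the same route as the paper: apply quasi-selectivity to the leftmost-representative function of the partition (what the paper simply calls ``the function $f$ mapping each number to the least element of its class'') and observe that a nondecreasing function has interval fibers. The paper is much terser---it dispatches the interval-to-one and P-point corollaries in a single sentence---so your explicit construction of $g$ and your analysis of tail intervals in $B_0$ are correct elaborations of what the paper leaves implicit. One stylistic quibble: the phrase ``suppose towards contradiction that no block $A_n$ belongs to $\U$'' is really a case assumption, not a reductio hypothesis; the only contradiction occurs in the inner sub-argument ruling out infinite blocks $A_n\cap B_0$.
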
   

    \proof
    Given a 
    partition of $\N$, consider the function $f$ mapping each number 
    to the least element of its class. Then $f(n)\le n$ for all 
    $n\in\N$, so, by quasi-selectivity, there exists a set $U\in\U$ 
    such that the restriction
    $f_{\upharpoonleft U}$ is nondecreasing. Then  the given partition is 
    an interval partition when restricted to $U$.
    \qed

    \medskip
    We say that a function $f:\N\to\N$ has 
\emph{polynomial growth}
if it is eventually dominated by some polynomial, 
\ie~if
there exist $k,m$ such that for all $n>m$, $f(n)\le n^k$.

We say that a function $f:\N\to\N$ has \emph{minimal steps} if 
$|f(n+1)-f(n)|\le 1$ for all $n\in\N$.

\begin{proposition}\label{qseleq}
The following properties are equivalent for a nonprincipal 
ultrafilter $\U$ on $\N$:
\begin{enumerate}
    \item  $\U$ is quasi-selective;
    
    \item every function of polynomial growth is $\U$-equivalent
to a nondecreasing one;

    \item every function with minimal steps is $\U$-equivalent
to a nondecreasing one.\footnote{~
Ultrafilters satisfying this property are called \emph{smooth} in \cite{dn}.}

\end{enumerate}

\end{proposition}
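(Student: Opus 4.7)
The plan is to establish $(2) \Rightarrow (1), (3)$ as immediate inclusions, $(1) \Rightarrow (3)$ and $(1) \Rightarrow (2)$ by direct constructions, and the delicate $(3) \Rightarrow (1)$ via a stretched minimal-step interpolation. For the immediate part, any $f$ with $f(n) \le n$ is polynomial of degree $1$, and any minimal-step $f$ satisfies $f(n) \le f(0)+n$, hence polynomial of degree $1$, so $(2)$ implies both $(1)$ and $(3)$. For $(1) \Rightarrow (3)$, given a minimal-step $f$ one has $f(n) \le f(0) + n$. If $\{n : f(n) \ge f(0)\} \in \U$, the function $g(n) := \max(f(n) - f(0), 0)$ satisfies $g(n) \le n$ everywhere, and by $(1)$ is $\U$-equivalent to a nondecreasing $\tilde g$, whence $f \equiv_\U \tilde g + f(0)$ is nondecreasing; otherwise the complement is in $\U$, on which $f$ takes only finitely many values and hence is $\U$-equivalent to a constant.

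For $(1) \Rightarrow (2)$ I would induct on the polynomial degree $k$. The base case $k=1$ is precisely $(1)$. For the inductive step, given $f(n) \le n^{k+1}$, perform the Euclidean division $f(n) = n \cdot h(n) + r(n)$ with $h(n) = \lfloor f(n)/n \rfloor \le n^{k}$ and $0 \le r(n) < n$. By the inductive hypothesis $h \equiv_\U \tilde h$ for some nondecreasing $\tilde h$, and by $(1)$ applied to $r$, also $r \equiv_\U \tilde r$ for some nondecreasing $\tilde r$. Then on the intersection of the two $\U$-sets, $f \equiv_\U n \tilde h + \tilde r$, and the latter is globally nondecreasing because the product $n \mapsto n \tilde h(n)$ of nondecreasing non-negative functions is nondecreasing, and so is the sum of nondecreasing functions.

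The remaining direction $(3) \Rightarrow (1)$ is the most delicate. Given $f(n) \le n$, I would construct a minimal-step \emph{stretched interpolation} $\tilde f \colon \N \to \N$ by setting $T(0) = 0$ and $T(k+1) := T(k) + |f(k+1) - f(k)| + 1$, and defining $\tilde f$ on each interval $[T(k), T(k+1)]$ to walk with unit steps from $f(k)$ to $f(k+1)$ with one stationary step of slack. By $(3)$ there exists a nondecreasing $\tilde h$ with $\tilde f \equiv_\U \tilde h$, agreeing on some $A \in \U$; then $g(k) := \tilde h(T(k))$ is a globally nondecreasing function (composition of nondecreasing functions) with $\{k : f(k) = g(k)\} = T^{-1}(A)$, so the goal reduces to showing $T^{-1}(A) \in \U$. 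This pull-back is the main obstacle; the expected resolution passes first through showing that $(3)$ forces $\U$ to be a $P$-point (by applying $(3)$ to a suitable minimal-step counter function attached to a candidate partition that would otherwise fail the $P$-point property), and then using the $P$-point property to adjust the sample times $T(k)$ inside a $\U$-large set so that the pull-back $T^{-1}(A) \in \U$ is forced.
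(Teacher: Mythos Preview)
Your treatment of the easy implications $(2)\Rightarrow(1),(3)$ and of $(1)\Rightarrow(3)$ is correct. Your argument for $(1)\Rightarrow(2)$ via Euclidean division $f(n)=n\cdot h(n)+r(n)$ is also correct, and is a slightly different decomposition from the paper's: the paper doubles the exponent instead, writing $g<n^{2k}$ as $g=f^{2}+f_{1}+f_{2}$ with $f=\lfloor\sqrt g\rfloor$ and $f_{1},f_{2}\le f<n^{k}$. Both work for the same reason (sums and products of nondecreasing non-negative functions are nondecreasing), and yours is arguably the more natural induction.

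The genuine gap is in $(3)\Rightarrow(1)$. Your stretched interpolation does produce a minimal-step $\tilde f$ with $\tilde f(T(k))=f(k)$, and $(3)$ gives a nondecreasing $\tilde h$ agreeing with $\tilde f$ on some $A\in\U$. But you then need $T^{-1}(A)\in\U$, and this simply does not follow. Since $|f(k+1)-f(k)|$ can be of order $k$, the map $T$ grows roughly quadratically; its image $T[\N]$ need not lie in $\U$, and even if it did, nothing short of selectivity forces preimages under an arbitrary increasing injection to be $\U$-large. The proposed repair via the P-point property does not close the gap: being a P-point controls finite-to-one reductions of functions, not preimages along an injection you had no freedom in choosing. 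The phrase ``adjust the sample times $T(k)$ inside a $\U$-large set'' hides a circularity, since changing $T$ changes $\tilde f$ and hence the set $A$ you are trying to hit.

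The paper's argument for $(3)\Rightarrow(1)$ avoids any reparametrisation of $\N$. It first uses $(3)$ alone to produce a set $U=\{u_{1}<u_{2}<\cdots\}\in\U$ with $u_{n+1}>2u_{n}$: three explicit ``zigzag'' minimal-step functions are written down, each built from isosceles-triangle graphs over consecutive dyadic-type intervals, arranged so that any set on which all three are nondecreasing meets each interval in at most one point; a further thinning then yields the doubling condition. Once such a sparse $U$ is in hand, any $g\le\lceil n/2\rceil$ satisfies $|g(u_{n+1})-g(u_{n})|\le\lceil u_{n+1}/2\rceil<u_{n+1}-u_{n}$, so $g\restr U$ extends to a minimal-step function on all of $\N$, and $(3)$ applies directly without any pull-back. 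Writing the given $f\le n$ as $f_{1}+f_{2}$ with $f_{i}\le\lceil n/2\rceil$ finishes. The key conceptual difference from your attempt is that the paper manufactures the room for minimal-step interpolation \emph{inside} the original copy of $\N$, on a $\U$-large sparse set, rather than on a stretched copy that then has to be pulled back.
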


\proof
$(1)\Imp (2)$.   We prove that if every function $f<n^{k}$ is 
$\U$-equivalent to a nondecreasing one, 
then the same property holds for every function $g<n^{2k}$. 
The thesis then follows by induction on $k$.
Given $g$, let $f$ be the integral part of the square root of $g$. So 
$g<f^{2}+2f+1$, and hence $g=f^{2}+f_{1}+f_{2}$ for suitable functions
$f_{1},f_{2}\le f<n^{k}$. By hypothesis we can pick nondecreasing 
functions $f',f'_{1},f'_{2}$ that are $\U$-equivalent to 
$f,f_{1},f_{2}$, respectively.
Then clearly $g$ is $\U$-equivalent to the nondecreasing function
$f'^{2}+f'_{1}+f'_{2}$.

$(2)\Imp (3)$ is trivial.

$(3)\Imp (1)$. We begin by showing that $(3)$ implies the following
\begin{claim} There exists $U=\{u_{1},\ldots,u_{n}\ldots\}\in\U$ such 
    that $u_{n+1}>2u_{n}$.  
\end{claim}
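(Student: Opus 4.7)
Plan: I would apply hypothesis (3) to a carefully designed minimal-step tent function on $\N$ and then extract the desired sparse set $U$ from the resulting nondecreasing $\U$-set $V$ by refining $V$ via finite set-theoretic partitions of $\N$.

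Specifically, take $f:\N\to\N$ to be $f(n):=\min_{k\geq 0}|n-2^k|$, the distance from $n$ to the nearest power of two. Since distance to any subset of $\N$ is $1$-Lipschitz, $f$ has minimal steps; and on each dyadic block $[2^k,2^{k+1}]$ it is a symmetric tent, rising from $0$ at $n=2^k$ to the peak $2^{k-1}$ at the midpoint $n=3\cdot 2^{k-1}$, then falling back to $0$ at $n=2^{k+1}$. By (3) there is $V\in\U$ on which $f|_V$ is nondecreasing.

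Since $f$ is strictly decreasing on each descending half-block $[3\cdot 2^{k-1},2^{k+1})$, the set $V$ contains at most one element in each such half-block. The ultrafilter $\U$ chooses one side of the natural partition $\N=D\cup E$ into the descending union $D:=\bigcup_{k}[3\cdot 2^{k-1},2^{k+1})$ and the complementary ascending union $E:=\N\setminus D$. If $D\in\U$, enumerate $V\cap D=\{d_1<d_2<\cdots\}$ with $d_i\in[3\cdot 2^{k_i-1},2^{k_i+1})$ and $k_i$ strictly increasing; a further bisection of $D$ by the parity of $k$, one of whose two pieces lies in $\U$, guarantees consecutive block-index jumps of at least $2$, whence $u_{i+1}>2u_i$. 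If instead $E\in\U$, I would apply (3) a second time to a shifted tent such as $f'(n):=\min_{k\geq 1}|n-3\cdot 2^{k-1}|$, whose descending halves overlap heavily with $E$, and carry out the analogous analysis: a finite case split on the joint $(f,f')$-classification of each $n$ reduces to the descending-in-$\U$ case for one of the two tents.

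The main obstacle is the delicate ``ascending-in-both-tents'' case, in which neither tent's descending union lies in $\U$. Handling this residual case is the principal technical subtlety; it likely requires a further shifted-tent iteration combined with a careful combinatorial argument exploiting the joint nondecreasing constraint on the shrinking $\U$-set and the finite-partition structure of the ultrafilter.
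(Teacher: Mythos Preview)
Your tent function $f$ is essentially the paper's $f_0$, and the overall strategy (minimal-step tents, then thin by a finite partition) is the right one. But the gap you flag is genuine, and your proposed second tent $f'(n)=\min_k|n-3\cdot 2^{k-1}|$ cannot close it: $f'$ has the same dyadic scale as $f$, with valleys at the peaks of $f$, so its descending halves cover only the \emph{outer} part of each ascending half of $f$. Concretely, on $[2^k,\,9\cdot 2^{k-3}]=[2^k,\tfrac{9}{8}\cdot 2^k]$ both $f$ and $f'$ are ascending, and nothing in your scheme prevents $\U$ from concentrating there. Iterating with further same-scale shifts only shrinks this residual by a bounded multiplicative factor at each step; it never disappears.

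The paper avoids the case split entirely by choosing the two auxiliary tents on a \emph{doubled} scale. Its $f_1$ has valleys at $3\cdot 2^{2k}$ and peaks at $15\cdot 2^{2k-1}$; its $f_2$ has valleys at $3\cdot 2^{2k-1}$ and peaks at $15\cdot 2^{2k-2}$. Because each of these tents spans two full dyadic blocks, a single descending half of $f_1$ (resp.\ $f_2$) contains the entire ascending half $[2^k,3\cdot 2^{k-1}]$ of $f_0$ when $k$ is odd (resp.\ even). Thus the three descending families together cover all of $\N$: at every point at least one of $f_0,f_1,f_2$ is strictly decreasing. Now take $V\in\U$ on which all three are nondecreasing simultaneously; $V$ meets each half-interval $[2^k,3\cdot 2^{k-1}]$ or $[3\cdot 2^{k-1},2^{k+1}]$ at most once. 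Partition $V$ into four pieces by taking every fourth point; the piece in $\U$ has consecutive elements at least four half-intervals apart, hence $u_{n+1}>2u_n$. Your parity-of-$k$ refinement plays the same role as this factor-of-four thinning, but the key missing ingredient is the doubled period of the auxiliary tents, which eliminates the residual ascending region outright rather than chipping away at it.
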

        
Define three minimal step functions 
$f_{0},f_{1},f_{2}$ as follows, for $k\in\N^{+}$: 
$$f_{0}(m)= 2^{k-1}-|3\cdot 2^{k-1}-m|\ \ \mbox{for}\ \ 2^{k}\le m\le 
2^{k+1}$$
$$f_{1}(m)= 9\cdot 2^{2k-1}-|15\cdot 2^{2k-1}-m|\ \ \mbox{for}\ \ 
3\cdot 2^{2k}\le m\le 
3\cdot 2^{2k+2} \ $$
$$f_{2}(m)= 9\cdot 2^{2k-2}-|15\cdot 2^{2k-2}-m|\ \ \mbox{for}\ \ 
3\cdot 2^{2k-1}\le m\le 
3\cdot 2^{2k+1}$$
The graphs of these functions are made up of the catheti of 
isosceles  right
triangles whose hypotenuses are placed on the horizontal axis. 
The 
function $f_{0}$ is decreasing in the intervals $[3\cdot 
2^{k-1},2^{k+1}]$, whereas in the intervals 
$[2^{k},3\cdot 
2^{k-1}]$ the function $ f_{1}$ is decreasing for odd $k$, and 
$f_{2}$ is decreasing for even $k$. The ultrafilter $\U$ contains a set 
$V$ on which 
all three of these functions are nondecreasing. Such 
 a set $V$ has at most one point in each interval. 
Starting from each one of the first four 
points of $V$, partition $V$ into four parts by taking every fourth point, so 
as to obtain four sets satisfying the condition of the claim. Then exactly
one of the resulting sets belongs to $\U$, and the claim follows. 

Now remark that every 
    function $f\le n$ can be written as a sum $f_{1}+f_{2}$, where 
    $f_{1},f_{2}\le \lceil\frac{n}{2}\rceil$. 
Pick a set 
$U\in\U$ as given by the claim. Then both functions $f_{1}$ and 
$f_{2}$ agree on $U$
    with  suitable minimal step functions, because 
    $u_{n+1}-u_{n}> \frac{u_{n+1}}{2}$, whereas
$g\le \lceil\frac{n}{2}\rceil$ implies
    $|g(u_{n+1})-g(u_{n})|\le \lceil\frac{u_{n+1}}{2}\rceil$.  So $f_{1},f_{2}$ 
    are equivalent  modulo $\U$ to two nondecreasing functions 
    $f'_{1},f'_{2}$ respectively, and $f$ is equivalent  modulo $\U$ to their sum
    $f'_{1}+f'_{2}$, which is nondecreasing as the sum of nondecreasing 
    functions.
           \qed

\begin{thm}\label{FU}
    Let $\U$ be a quasi-selective ultrafilter, and let $f:\N\to\N$ be 
    non\-decreasing. Then the following properties are equivalent:
    \begin{enumerate}
        \item[$(i)$]  for every function $g\le f$ there exists a
	nondecreasing function $h\Equ g$;
    
        \item[$(ii)$]   there exists $U=\{u_{n}\mid n\in\N\}\in\U$ such that 
	$f(u_{n})< u_{n+1}-u_{n}$.
    \end{enumerate}
    
\end{thm}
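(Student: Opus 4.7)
The plan is to prove the two directions separately.

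For $(ii) \Rightarrow (i)$: Given $U = \{u_n\} \in \U$ satisfying $f(u_n) < u_{n+1} - u_n$ and $g \le f$, the key auxiliary function is $G \colon \N \to \N$ defined by $G(m) = u_n + g(u_n)$ for $m \in [u_n, u_{n+1})$. The spacing bound $g(u_n) \le f(u_n) < u_{n+1} - u_n$ makes $G$ nondecreasing on $\N$: it is constant on each block and jumps up between blocks since $G(u_n) = u_n + g(u_n) < u_{n+1} \le G(u_{n+1})$. On $U$, the difference $G(m) - m$ recovers $g(m)$. The remaining task is to pass from the nondecreasing $G$ to a nondecreasing representative of $g$ itself; for this I would use the quasi-selectivity of $\U$ (after possibly intersecting $U$ with the fast-growing set furnished by the Claim in the proof of Proposition \ref{qseleq}, so that also $u_{n+1} > 2u_n$) to refine $U$ to a $\U$-subset $V$ on which the sequence $n \mapsto g(u_n)$ is nondecreasing. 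Then the step function $h$ equal to $g(v_k)$ on $[v_k, v_{k+1})$ is nondecreasing and satisfies $h \Equ g$.

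For $(i) \Rightarrow (ii)$: I would adapt the triangular-function Claim from the proof of Proposition \ref{qseleq} to general nondecreasing $f$. Inductively choose $a_0 < a_1 < \ldots$ such that $a_{k+1} - a_k$ substantially exceeds $3 f(a_k)$ and such that $f$ does not grow too much over $[a_k, a_{k+1})$ (for instance, taking $a_{k+1}$ to be the smallest integer greater than $a_k + 3 f(a_k)$ with $f(a_{k+1}) > 2 f(a_k)$). Subdivide each $[a_k, a_{k+1})$ into three sub-intervals of length roughly $f(a_k)$ and construct three functions $g_0, g_1, g_2 \le f$ whose graphs are catheti of right triangles of height comparable to $f(a_k)$, with descending portions staggered so that every point of $[a_k, a_{k+1})$ lies on the strictly descending side of at least one $g_i$. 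Applying $(i)$ to each $g_i$ produces $\U$-sets $W_i$ on which $g_i$ is nondecreasing; on $W = W_0 \cap W_1 \cap W_2 \in \U$ each sub-interval then contains at most one point, so consecutive elements of $W$ satisfy the required $f$-spacing. A final partition of $W$ into finitely many equally-spaced subsets, one of which lies in $\U$, yields the set $U$.

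The main obstacle is the triangular construction in $(i) \Rightarrow (ii)$: the functions $g_i$ must lie everywhere below $f$, and their descending portions must collectively rule out sub-intervals with more than one $W$-point; controlling the growth of $f$ over each block is therefore essential, which is why the choice of $a_{k+1}$ must force $f$ to at most double across $[a_k, a_{k+1})$. A subsidiary delicate point in $(ii) \Rightarrow (i)$ is that the nondecreasing $G$ does not by itself give a nondecreasing representative of $g$ (the difference of nondecreasing functions is generally not nondecreasing); bridging this gap is exactly where quasi-selectivity, rather than the mere P-point property of Proposition \ref{int1}, is needed.
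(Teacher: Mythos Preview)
Both directions of your proposal have genuine gaps, though of different severity.

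For $(ii)\Rightarrow(i)$: your auxiliary function $G$ is indeed nondecreasing, but as you yourself note, this does not produce a nondecreasing representative of $g$. Your fallback---``use quasi-selectivity to refine $U$ to a $V$ on which $n\mapsto g(u_n)$ is nondecreasing''---is exactly the point that needs proof, and the extra hypothesis $u_{n+1}>2u_n$ does not supply it: quasi-selectivity applies only to functions bounded by the identity (or, via Proposition~\ref{qseleq}, of polynomial growth), and nothing you wrote bounds $g(u_n)$ by $u_n$. The paper bypasses $G$ entirely. One partitions $U$ into $U_1=\{u\in U:\forall x\in U\,(x<u\Rightarrow g(x)\le g(u))\}$ and its complement $U_2$. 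On $U_1$, $g$ is already nondecreasing. For $u\in U_2$, pick $x<u$ in $U$ with $g(x)>g(u)$; then $g(u)<g(x)\le f(x)<u$, the last inequality being precisely the spacing hypothesis. Thus $g$ is bounded by the identity on $U_2$, and quasi-selectivity applies directly. This dichotomy is the missing idea.

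For $(i)\Rightarrow(ii)$: your three-triangle scheme, modeled on the Claim in Proposition~\ref{qseleq}, creates a difficulty you then cannot resolve. The recipe ``$a_{k+1}=$ smallest integer above $a_k+3f(a_k)$ with $f(a_{k+1})>2f(a_k)$'' fails outright if $f$ is bounded, and when $f$ grows slowly the block $[a_k,a_{k+1})$ can be far longer than $3f(a_k)$, so three descending legs of height $\approx f(a_k)$ cannot cover it. The paper's construction is much simpler: set $x_0=1$, $x_{n+1}=x_n+f(x_n)$, and define a \emph{single} sawtooth $g(x_n+h)=f(x_n)-h$ for $0\le h<f(x_n)$. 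Since $f$ is nondecreasing, $g\le f$ holds automatically; since $g$ is strictly decreasing on each $[x_n,x_{n+1})$, any $\U$-set on which $g$ is nondecreasing meets each interval in at most one point, and passing to the even- (or odd-) indexed points yields the required $U$. No growth control on $f$ is needed, and one function suffices instead of three.
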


\proof
$(i)\Imp (ii)$. Define inductively 
the sequence $\la x_{n}\mid n\in\N\ra$ by putting $x_{0}=1$ and
 $x_{n+1}=f(x_{n})+x_{n}$. Define 
$g:\N\to\N$ by $g(x_{n}+h)=f(x_{n})-h$ for $0\le h < f(x_{n})$.
Assuming $(i)$, there is a set in $\U$ which meets each interval 
$[x_{n},x_{n+1})$ in one point $a_{n}$. So by putting either 
$u_{n}=a_{2n}$ or $u_{n}=a_{2n+1}$ we obtain a set $U$ satisfying the 
condition $(ii)$. Namely, in the even case we have
$$u_{n+1}-u_{n}>x_{2n+2}-x_{2n+1}=f(x_{2n+1}) \ge f(u_{n}),$$
and similarly in the odd case.

$(ii)\Imp (i)$.  By $(ii)$ we may pick $U\in\U$ such that 
 $x<y$ both in $U$ 
 implies $y>f(x)$. Given $g\le f$, partition $U$ as follows
 $$\begin{array}{ccl}
     U_{1} & = & \{u\in U\mid \forall x\in U \,(x<u\ \Imp\ g(x)\le 
     g(u)\,)\}  \\
     U_{2} & = & \{u\in U\mid \exists x\in U \,(x<u\ \&\ g(x)> 
 g(u)\,)\}
 \end{array}$$
   Then $g$ is nondecreasing on $U_{1}$, so 
we are done  when $U_{1}$ belongs to $\U$. Otherwise $U_{2}\in\U$ and
we have $g(u)<u$ for all  
$u\in U_{2}$. In fact, given 
$u\in U_{2}$,  pick $x\in U$ such that $x<u$ and $g(x)> g(u)$: then
$$g(u) <g(x)\le f(x)< u.$$
 Then $(i)$ follows by quasi-selectivity of $\U$.
\nopagebreak
\qed

\bigskip

Let us denote by $\fu$ the class of all  functions $f$ satisfying the 
equivalent conditions of the 
above theorem
$$\fu=\{ f:\N\to\N\ \mbox{\rm{nondecreasing}}\,\mid\, g\le f\ \Imp\ 
\exists h\ \mbox{\rm{nondecreasing s.t.}}\ h\Equ g\, \}.$$
Recall that, if $\,\U$ is quasi-selective, then every function is 
$\U$-equivalent to an ``interval-to-one'' function.  As a consequence,
the class $\fu$ ``measures the selectivity'' of quasi-selective 
ultrafilters, according to the following proposition.

\begin{proposition}\label{missel}

Let $\U$ be a quasi-selective ultrafilter and let $g$ be  an unbounded 
interval-to-one function. Define the function $g^{+}$ by 
$$g^{+}(n)=\max\,\{x\mid g(x)=g(n)\},$$ and let $e_{g}$ be the 
function enumerating  the range of $g^{+}$.
Then the following are equivalent:
\begin{enumerate}
    \item $g$ is 
$\U$-equivalent 
to a one-to-one function; 

    \item  $g^{+}$ belongs to $\fu$;

    \item  there exists 
 $U=\{u_{0}<u_{1}<\ldots<u_{n}<\ldots\}\in\U$ such that $u_{n}>e_{g}(n)$.
\end{enumerate}
 
\end{proposition}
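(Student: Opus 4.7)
My plan is to prove the three equivalences by the cycle $(2)\Rightarrow(1)\Rightarrow(3)\Rightarrow(2)$. Throughout, write $I_k$ for the $k$-th fiber (interval) of $g$, ordered by position in $\N$, with right endpoint $e_g(k)=\max I_k$, left endpoint $a_k:=e_g(k-1)+1$ (and $a_0:=0$), and $\Pi(n)$ for the index of the interval containing $n$; thus $g^+(n)=e_g(\Pi(n))$.

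For $(2)\Rightarrow(1)$, I would apply Theorem~\ref{FU} to the nondecreasing function $g^+$ to obtain $U=\{u_n\}\in\U$ with $g^+(u_n)<u_{n+1}-u_n$. Since $g^+(u_n)=e_g(\Pi(u_n))$ is the right endpoint of $u_n$'s interval, this forces $u_{n+1}>e_g(\Pi(u_n))$, hence $\Pi(u_{n+1})>\Pi(u_n)$: consecutive elements of $U$ lie in distinct intervals, so $g|_U$ is injective. For $(1)\Rightarrow(3)$, if $W\in\U$ has $g|_W$ injective, then $W$ meets each interval at most once; enumerating $W=\{w_n\}$ with $w_n\in I_{j_n}$ and $j_n$ strictly increasing, we obtain $j_n\geq n$ and $w_n\geq a_{j_n}\geq a_n$. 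Discarding $w_0$ (permissible since $\U$ is nonprincipal), the set $U:=W\setminus\{w_0\}\in\U$ has enumeration $u_n=w_{n+1}$ satisfying $\Pi(u_n)\geq n+1$, hence $u_n\geq a_{n+1}=e_g(n)+1>e_g(n)$.

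The substantive direction is $(3)\Rightarrow(2)$. Given $U\in\U$ with $u_n>e_g(n)$, the plan is to produce $V\in\U$ satisfying the gap condition $g^+(v_n)<v_{n+1}-v_n$ of Theorem~\ref{FU}, which yields $g^+\in\fu$. I pass to the quasi-selective ultrafilter $\U_U$ on $\N$ induced by the enumeration $n\mapsto u_n$ of $U$ (isomorphic to the restriction of $\U$ to $U$), and apply Theorem~\ref{FU} within $\U_U$ to the nondecreasing function $\bar g(n):=g^+(u_n)=e_g(\Pi(u_n))$. If $\bar g$ lies in the analogous class $\fu$ for $\U_U$, there exists $T=\{t_m\}\in\U_U$ with $\bar g(t_m)<t_{m+1}-t_m$; setting $V:=\{u_{t_m}\}\in\U$, the $\N$-gap $v_{m+1}-v_m=u_{t_{m+1}}-u_{t_m}\geq t_{m+1}-t_m$ dominates $\bar g(t_m)=g^+(v_m)$, yielding~(2).

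The main obstacle is verifying $\bar g\in\fu$ relative to $\U_U$. Although $(3)$ forces $\Pi(u_n)\geq n+1$, bounding $\bar g$ from below by $e_g(n+1)$, its upper growth depends on interval sizes and may exceed polynomial, so Proposition~\ref{qseleq}'s polynomial-growth criterion does not apply directly. The plan is to exploit the minimal-step form of quasi-selectivity (Proposition~\ref{qseleq}(3)) together with condition $(3)$ to thin $U$ further within $\U_U$, producing a subset on which $\bar g$ becomes polynomially bounded in the re-indexed enumeration; then Proposition~\ref{qseleq}(2) supplies the monotone approximation of every $h\leq\bar g$ required for condition~(i) of Theorem~\ref{FU}.
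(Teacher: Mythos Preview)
Your implications $(2)\Rightarrow(1)$ and $(1)\Rightarrow(3)$ are correct; they amount to the paper's single step $(2)\Rightarrow(3)$, just split in two.

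The problem is $(3)\Rightarrow(2)$. Your reduction to $\bar g\in F_{\U_U}$ is valid, and $\U_U$ is indeed quasi-selective (this does need a word of justification, since quasi-selectivity is \emph{not} isomorphism-invariant by Proposition~\ref{blp}; here it survives because the enumeration $n\mapsto u_n$ is increasing, so any $f\le\mathrm{id}$ on $\N$ lifts to $u_n\mapsto u_{f(n)}\le u_n$, and quasi-selectivity of $\U$ transfers back). But the plan to thin $U$ so that $\bar g$ becomes polynomially bounded cannot succeed. From $u_n>e_g(n)$ you get $\Pi(u_n)\ge n+1$, hence
\[
\bar g(n)=e_g(\Pi(u_n))\ge e_g(n+1),
\]
and this lower bound persists under any thinning: if $T=\{t_m\}\in\U_U$, then $\bar g(t_m)\ge e_g(t_m+1)\ge e_g(m+1)$. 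Since nothing in the hypotheses bounds the growth of $e_g$, the function $\bar g$ will be superpolynomial on every set in $\U_U$ whenever $e_g$ is, and neither Proposition~\ref{qseleq}(2) nor~(3) applies. So what you have identified as ``the main obstacle'' is in fact a wall for this route.

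The paper closes the cycle the other way, proving $(3)\Rightarrow(1)$ directly. Given $U=\{u_n\}$ with $u_n>e_g(n)$, group the $u_n$'s into maximal blocks lying in a common $g$-fiber: if $u_n$ is in the interval with right endpoint $e_g(n+k)$ and $u_n,\dots,u_{n+h-1}$ are the block in that interval, then $u_{n+h-1}>e_g(n+h-1)$ forces $h\le k$, and one may set $f(u_{n+i})=k-i$ for $0\le i<h$. This produces a function $f\le\mathrm{id}$ on $U$ that is \emph{strictly decreasing inside every block}. Quasi-selectivity then yields $V\in\U$ on which $f$ is nondecreasing, and such a $V$ can meet each block in at most one point, so $g\restr V$ is one-to-one. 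The passage $(1)\Rightarrow(2)$ is then a short separate argument: on a set where $g$ is one-to-one, $g^+$ is strictly increasing, and any $h\le g^+$ is handled by splitting into $\{h\le\mathrm{id}\}$ (use quasi-selectivity) and $\{h>\mathrm{id}\}$ (where $h(u)\le g^+(u)<v<h(v)$ makes $h$ already increasing).
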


\proof

(1)$\Imp$(2).  
Let $g$ be one-to-one on $U\in\U$. Then each 
interval where $g$ is constant contains at most one point of $U$. 
Hence  $g^{+}$ 
is increasing on $U$. Let $h\le g^{+}$ be given, and put 
$$ U_{1}  =  \{u\in U\mid h(u)\le 
     u\,\},\ \ 
     U_{2}  = \{u\in U\mid h(u)> 
     u\,\}.$$
     
     Then $u<h(u)\le g^{+}(u)$ for $u\in U_{2}$, and hence $h$ is 
     increasing when restricted to $U_{2}$. So we are done when $U_{2}\in\U$.
     On the other hand, when $U_{1}\in\U$, the function $h$ is 
     $\U$-equivalent to a nondecreasing one, by 
     quasi-selectivity.
     
     \medskip
     (2)$\Imp$(3). By Theorem \ref{FU}, there exists a set
      $U=\{u_{n}\mid n\in\N\}\in\U$ such that 
     $g^{+}(u_{n})< u_{n+1}-u_{n}$.  Suppose that $g(u_{n})=g(u_{n+1})$ 
     for some $n$; then 
     $g^{+}(u_{n})\ge u_{n+1}\ge u_{n+1}-u_{n} > g^{+}(u_{n})$, a
     contradiction. Hence $g$ is one-to-one on $U$, and so before
     $u_{n+1}$ there are at least $n$ intervals of the form 
     $g^{-1}(k)$. Therefore 
     $u_{n+1}> e_{g}(n)$, and $U\/ \{u_{0}\}$ satisfies 
     condition (3).    
     
     \medskip
     (3)$\Imp$(1).
     For each $n\in\N$ let $k$ be the unique number such that $u_{n}$ 
     lies in the interval $[e_{g}(n+k-1), e_{g}(n+k))$. Then let $h$ 
     be the unique number such that $e_{g}(n+k)$ lies in the interval 
     $(u_{n+h-1}, u_{n+h}]$. Thus we have
     $$e_{g}(n+k-1)\le u_{n}\le 
 u_{n+h-1}<e_{g}(n+k)\le u_{n+h}.$$ 
 Then $k \ge h$ and $u_{n}\ge k$, and we can define the function $f$ on 
 $U$ by $f( u_{n+i})=k-i$, for $0\le i<h$. Since $f(u)\le u$ for all $u\in U$ there 
exists a set $V\in \U$ on which $f$ is nondecreasing. Then $g$ is 
one-to-one on $V\cap U$.
     
\qed

\medskip
Recall that the ultrafilter $\U$  is \emph{rapid} if for
 every increasing function $f$ there exists 
 $U=\{u_{0}<u_{1}<\ldots<u_{n}<\ldots\}\in\U$ 
such that $u_{n}>f(n)$.
 The equivalence of the conditions in the above corollary yields

\begin{cor}

A quasi-selective ultrafilter is selective if and only if
 it is rapid.

 \qed
\end{cor}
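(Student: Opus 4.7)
The plan is to read the corollary as a direct translation between the definition of rapidity and condition $(3)$ of Proposition \ref{missel}, with Proposition \ref{int1} providing the passage from arbitrary functions to interval-to-one ones. Both implications then reduce to choosing the right interval-to-one representative and invoking the proposition.

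For the forward direction (selective $\Imp$ rapid), I would take a strictly increasing $f:\N\to\N$ and manufacture an interval-to-one function $g$ whose enumeration $e_g$ coincides with $f$: put $g(x)=k$ on the interval $(f(k-1),f(k)]$, with the convention $f(-1)=-1$. Then $g^+(x)=f(k)$ throughout this interval, so the range of $g^+$ is exactly $\{f(n)\mid n\in\N\}$ and $e_g(n)=f(n)$. Since $\U$ is selective, every function, in particular this $g$, is $\U$-equivalent to a one-to-one function, so clause $(1)$ of Proposition \ref{missel} is satisfied. The implication $(1)\Imp(3)$ then supplies a set $U=\{u_n\mid n\in\N\}\in\U$ with $u_n>e_g(n)=f(n)$, which is precisely what rapidity demands.

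For the reverse direction (quasi-selective plus rapid $\Imp$ selective), I would take an arbitrary $g:\N\to\N$ and show that it is $\U$-equivalent to either a constant or a one-to-one function. If $g$ is bounded then by the ultrafilter property some level set $g^{-1}(c)$ lies in $\U$ and we are done. Otherwise $g$ is unbounded, so by Proposition \ref{int1} there is an interval-to-one function $g'\Equ g$, necessarily unbounded as well. Then $e_{g'}$ is a well-defined, strictly increasing function $\N\to\N$, and rapidity applied to $e_{g'}$ yields a set $U=\{u_n\mid n\in\N\}\in\U$ with $u_n>e_{g'}(n)$. This is exactly clause $(3)$ of Proposition \ref{missel}, so the implication $(3)\Imp(1)$ of that proposition gives that $g'$, and hence $g$, is $\U$-equivalent to a one-to-one function.

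The only bookkeeping to watch is that $e_g=f$ in the first direction and that $e_{g'}$ is actually strictly increasing in the second (which uses unboundedness of $g'$, hence the separate treatment of the bounded case); beyond this, there is no genuine obstacle, since the corollary is essentially a dictionary translation between the rapidity condition and clause $(3)$ of Proposition \ref{missel}.
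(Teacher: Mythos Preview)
Your approach is exactly what the paper intends: the corollary is read off directly from the equivalence $(1)\Leftrightarrow(3)$ of Proposition~\ref{missel}, with Proposition~\ref{int1} supplying interval-to-one representatives. Two small slips deserve correction, though neither affects the underlying idea.

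In the forward direction you write ``since $\U$ is selective, every function\ldots is $\U$-equivalent to a one-to-one function''. Selectivity only gives one-to-one \emph{or constant} on a set in $\U$. For your specific $g$ the constant alternative is excluded because each level set $(f(k-1),f(k)]$ is finite and $\U$ is nonprincipal; that should be said.

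In the reverse direction your dichotomy is bounded versus unbounded, and you assert that the interval-to-one $g'\Equ g$ is ``necessarily unbounded as well''. This does not follow from $g$ being unbounded: $g$ may be unbounded on $\N$ yet bounded on the set where it agrees with $g'$ (e.g.\ $g(2n)=0$, $g(2n+1)=n$, with the evens in $\U$). The clean dichotomy is whether some level set of $g$ lies in $\U$. If so, $g$ is $\U$-equivalent to a constant and you are done; if not, then $g'$ is not $\U$-equivalent to a constant either, and since a \emph{bounded} interval-to-one function has a cofinite final level set, $g'$ must be unbounded. With this adjustment your invocation of $(3)\Rightarrow(1)$ in Proposition~\ref{missel} goes through cleanly.
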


\smallskip

The class of   functions $\fu$ has the following closure 
properties:

\begin{proposition}\label{clos}
Let $\U$ be a quasi-selective  ultrafilter, and let 
$$\fu=\{ f:\N\to\N\ \mbox{\rm{nondecreasing}}\,\mid\, g\le f\ \Imp\ 
\exists h\ \mbox{\rm{nondecreasing s.t.}}\ h\Equ g\, \}.$$ Then
\begin{enumerate}
   \item for all $f\in\fu$  also $\f\in\fu$, where
    $$\f(n)=f^{\circ f(n)}(n)=\underbrace{(f\circ f\circ \ldots 
    \circ f)}_{f(n)\ 
    \mbox{{\scriptsize{\em times}}}}(n).\footnote{
    ~Here we agree that $f^{\circ 0}(n)=n$.}$$  

\item Every  sequence $\la f_{n}\mid n\in\N\ra$ in 
   $\fu$ is dominated by a  function $f_{\og}\in\fu$, \ie\
   for all $n$ there exists $k_{n}$ such that
   $f_{\og}(m)>f_{n}(m)$ for all $m>k_{n}$.   

\noindent
In particular the left cofinality of the  gap determined by
$\fu$ in the ultrapower $\N\ult{\N}{\U}$ 
 is uncountable. 
\end{enumerate}
\end{proposition}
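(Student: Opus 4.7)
I plan to prove the two items in sequence, relying on a sharp estimate for the witness set of $f$. Let $U=\{u_n\}_{n\in\N}\in\U$ satisfy $u_{n+1}-u_n>f(u_n)$ (from Theorem~\ref{FU}(ii) applied to $f$). Induction on $k\ge 1$, using monotonicity of $f$, gives
\[
u_{n+k}-u_n\;>\;\sum_{i=1}^{k}f^{\circ i}(u_n)\;\ge\;f^{\circ k}(u_n),
\]
so in particular $u_{n+f(u_n)}-u_n>\widetilde f(u_n)$ when $f(u_n)\ge 1$. Consequently, for every $k\ge 1$ the unique $r_k\in\{0,\ldots,k-1\}$ for which the coset $U_k=\{u_{kn+r_k}\}_n$ belongs to $\U$ also witnesses $f^{\circ k}\in\fu$.

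For part (1), the plan is to exhibit $V\in\U$ with $v_{j+1}-v_j>\widetilde f(v_j)$, so that Theorem~\ref{FU}(ii) applies to $\widetilde f$. Define recursively $a_0=0$ and let $a_{j+1}$ be the smallest index past $a_j$ with $u_{a_{j+1}}-u_{a_j}>\widetilde f(u_{a_j})$ (handling the edge case $f(u_{a_j})=0$ via the smoothness $v_{n+1}>2v_n$ of $\U$ granted by Proposition~\ref{qseleq}); set $V=\{u_{a_j}\}_j$, which satisfies the required spacing by the estimate above. To check $V\in\U$, consider the block-offset function $\rho:\N\to\N$ defined by $\rho(u_n)=n-a_{j(n)}$ for $u_n\in U$ in block $j(n)$ and $\rho(m)=0$ for $m\notin U$. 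The estimate $a_{j+1}-a_j\le f(u_{a_j})+1$ gives $\rho\le f$, so by $f\in\fu$ there exists a nondecreasing $\tau\Equ\rho$. The sawtooth behavior of $\rho$---which vanishes identically on $V\cup(\N\setminus U)$ and resets to $0$ at every block start---together with $\tau$ nondecreasing forces $\tau$ to vanish on a $\U$-large set contained in $V$, so $V\in\U$.

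For part (2), the plan is a diagonal $P$-point construction. Pick witnesses $U_n=\{u_k^{(n)}\}_k\in\U$ for each $f_n$ and, invoking the $P$-point property from Proposition~\ref{int1}, choose $U^*=\{u_k^*\}_k\in\U$ with $U^*\subseteq^* U_n$ for every $n$. Let $N_n$ be the least index past which $u_k^*\in U_n$, set $N_n^*=\max(N_0,\ldots,N_n)$, and define $g(k)=\max\{n\le k:N_n^*\le k\}$, which is nondecreasing with $g(k)\to\infty$. Put
\[
f_\omega(m)=\max\{f_n(m):n\le g(k)\}\quad\text{for }m\in[u_k^*,u_{k+1}^*).
\]
One checks that $f_\omega$ is nondecreasing (both within and across blocks), that $f_\omega(u_k^*)<u_{k+1}^*-u_k^*$ (since for $n\le g(k)$ one has $k\ge N_n^*\ge N_n$, giving $f_n(u_k^*)<u_{k+1}^*-u_k^*$), so $U^*$ witnesses $f_\omega\in\fu$, and that $f_\omega(m)\ge f_n(m)$ for every $m$ in a block of index $\ge N_n^*$, yielding eventual pointwise domination of each $f_n$. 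The uncountable left cofinality is then immediate: any countable cofinal family in $\fu$ would be strictly dominated by some $f_\omega\in\fu$, a contradiction.

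The main obstacle is the diagonal nature of $\widetilde f$ in part (1): the exponent $k=f(w_n)$ in $\widetilde f(w_n)=f^{\circ f(w_n)}(w_n)$ varies with $n$, whereas the $P$-point pseudo-intersection of the cosets $U_k$ only gives spacing $>f^{\circ k}$ separately for each fixed $k$. The reduction via the block-offset function $\rho\le f$ circumvents this, but the delicate step is exploiting the sawtooth structure of $\rho$ together with quasi-selectivity to conclude that the canonical transversal of block-starts lies in $\U$.
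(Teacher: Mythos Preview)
Your part~(2) is correct and arguably cleaner than the paper's version: you build $f_\omega$ as a diagonal pointwise maximum over the $f_n$'s, while the paper sets $f_\omega(m)$ equal to the minimal gap in the pseudo-intersection $V$ beyond $m$. Your verification that the pseudo-intersection $U^*$ witnesses property~(ii) of Theorem~\ref{FU} for $f_\omega$ is sound. (A cosmetic point: as written you obtain $f_\omega\ge f_n$ eventually, not the strict inequality in the statement; replacing the $\max$ by $1+\max$ fixes this.)

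Part~(1), however, has a genuine gap. Your block-offset function $\rho(u_n)=n-a_{j(n)}$ \emph{increases} within each block, and this is the wrong direction. Nothing prevents a nondecreasing $\tau\Equ\rho$ from being, say, identically equal to some constant $c\ge 1$: then $\{\tau=\rho\}\cap U=\{u_{a_j+c}:j\}$, which may well lie in $\U$ while $V=\{u_{a_j}:j\}$ does not (the two sets are disjoint once all blocks have length~$>c$). So the assertion that ``$\tau$ must vanish on a $\U$-large set contained in~$V$'' is unjustified. More generally, a nondecreasing restriction of an \emph{increasing} sawtooth can meet each block in many points; it is a \emph{decreasing} sawtooth that forces at most one point per block.

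The paper repairs exactly this. Assuming $f\ge\mathrm{id}$, it fixes blocks $[\sigma(n),\sigma(n+1))$ of length $f(u_{\sigma(n)})$ and defines the \emph{decreasing} function $g(u_{\sigma(n)+j})=f(u_{\sigma(n)})-j$; since $g\le f$, there is $V\in\U$ on which $g$ is nondecreasing, and such a $V$ meets each block in at most one point $v_n=u_{\tau(n)}$. Even then the $v_n$ need not be your block-starts, so one further passes to every other point and runs a telescoping estimate (your inequality $u_{m+k}-u_m>f^{\circ k}(u_m)$ is exactly what is used here) together with the lower bound $k=\tau(2n+2)-\tau(2n)>\sigma(2n+2)-\sigma(2n+1)=f(u_{\sigma(2n+1)})\ge f(v_{2n})$ to conclude $v_{2n+2}-v_{2n}>\widetilde f(v_{2n})$. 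Reversing the direction of your $\rho$ and adding this every-other-point step would bring your argument in line with the paper's.
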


\proof
We prove first that  $\f$ fulfills property $(ii)$ of Theorem 
\ref{FU},
provided $f$ fulfills both properties $(i)$ and $(ii)$ of the same theorem.
By possibly replacing $f$ by $\max \{f, id\}$, we may assume without 
loss of generality that 
$f(n)\ge n$ for all $n\in\N$.

Let $U=\{u_{0}<u_{1}<\ldots<u_{n}\ldots\}\in\U$  be given by property
$(ii)$ for $f$.
Define inductively the sequence $\sg:\N\to\N$ by $$\sg(0)=1 \ 
\mbox{and}\ \sg(n+1)=\sg(n)+f(u_{\sg(n)}).$$
Define the function $g$ on $U$ by $$g(u_{\sg(n)+j})=f(u_{\sg(n)})-j\ \
\mbox{for}\ 0\le j< f(u_{\sg(n)}).$$
Since $g\le f$ on $U$, there exists a subset $V\in\U$ on which $g$ is 
nondecreasing. For each $n$, such a $V$ contains at most one point 
$v_{n}=u_{\tau(n)}$ 
with $\sg(n)\le \tau(n) <\sg(n+1)$. Assume without loss of generality 
that the set 
$\{v_{2n}\mid n\in\N\}\in\U$. 
We shall complete the proof by showing that $v_{2n+2}-v_{2n}> 
\f(v_{2n})$.
Put
$k=\tau(2n+2)-\tau(2n)$; then

$$\begin{array}{ccl}
v_{2n+2}-v_{2n} & = & 
\sum_{i=0}^{k-1}(u_{\tau(2n)+i+1}-u_{\tau(2n)+i}) > 
\sum_{i=0}^{k-1}f(u_{\tau(2n)+i})\ge \\
{} & {} & {}\\
    {} & \ge & f(u_{\tau(2n)+k-1})\ge f(f(u_{\tau(2n)+k-2}))\ge
    \ldots\ge f^{\circ k}(u_{\tau(2n)}).
 \end{array}$$
Now
$$k=\tau(2n+2)-\tau(2n)>\sg(2n+2)-\sg(2n+1)=f(u_{\sg(2n+1)})\ge 
f(v_{2n}).$$
Hence
$$v_{2n+2}-v_{2n}>f^{\circ k}(u_{\tau(2n)})\ge f^{\circ f(v_{2n})}(v_{2n})
=\f(v_{2n}).$$
This completes the proof of $(1)$.

\bigskip
In order to prove point $(2)$, let sets $U_{n}\in\U$ be chosen so as 
to satisfy the property $(ii)$ with respect to the function $f_{n}$. 
As $\U$ 
is a $P$-point, we can take 
$V\in\U$  almost included in every $U_{n}$, \ie\ $V\/ U_{n}$ finite
for all $n\in\N$. Define 
the function $f_{\og}$ by 
$$ f_{\og}(m)= \min\{v'-v\mid v',v\in V,\, v'>v\ge m\}.  $$
Let $k_{n}$ be such that, for all $v\in V$, $v\ge k_{n}$ implies 
$v\in U_{n}$. Given $m>k_{n}$ let $ f_{\og}(m)=v'-v$,
with $m < v <v'$ as required by the definition of $f_\omega$,
and let $u$ be the successor of 
$v$ in $U_{n}$. Then $$ f_{n}(m)\le f_{n}(v)< u-v\le v'-v=f_{\og}(m),$$
and so $f_{\og}$ dominates every $f_{n}$.
Now if $V=\{v_{0}<v_{1}<\ldots\}$, then 
$f_{\og}(v_{k})<v_{k+2}-v_{k}$. So  either $U=\{v_{2n}\mid 
n\in\N\}$ or $U'=\{v_{2n+1}\mid 
n\in\N\}$ witnesses the property $(ii)$ of Theorem \ref{FU} for $f_{\og}$,
and $(2)$ follows.
\nopagebreak\qed

\bigskip
Remark that all \emph{Ackermann functions}\footnote{~
Recall that $f_{m}(n)=A(m,n)$ can be inductively defined by 
$$f_{0}(n)=A(0,n)=n+1, \ \
f_{m+1}(n)=A(m+1,n)=\underbrace{(f_{m}\circ f_{m}\circ \ldots 
    \circ f_{m})}_{n+1\ 
    \mbox{{\scriptsize{\em times}}}}(1).$$} 
$f_{m}(n)=A(m,n)$ belong to $\fu$, because $f_{m+1}\le 
\widetilde{f}_{m}$. Since every primitive recursive function is 
eventually dominated by some $f_{m}$, we
 obtain the following property of ``primitive recursive 
rapidity'':
\begin{cor}
    Let $\U$ be a quasi-selective ultrafilter, and let $f:\N\to\N$ be 
    primitive recursive. Then $f$ is nondecreasing modulo $\U$, and 
    there exists a set
    $U=\{u_{0}<u_{1}<\ldots<u_{n}\ldots\}\in\U$ 
    with\ $~u_{n+1}>f(u_{n})$.
    \qed
\end{cor}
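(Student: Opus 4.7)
The plan is to reduce both assertions to the remark made just before the statement, that every Ackermann function $f_m$ belongs to $\fu$. Since each primitive recursive function is eventually dominated by some $f_m$, I would begin by fixing such an $m$ together with a threshold $N$ satisfying $f(n)\le f_m(n)$ for all $n\ge N$.

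For the first claim, alter $f$ on the finite segment $\{0,1,\ldots,N-1\}$, e.g.\ by setting its values there to $0$, to obtain $f'\le f_m$ on all of $\N$. Since $f$ and $f'$ differ only on a finite set, $f\Equ f'$; and the defining property of $\fu$ applied to $f_m$ produces a nondecreasing $h$ with $h\Equ f'$, hence $h\Equ f$. So $f$ is nondecreasing modulo $\U$.

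For the second claim, I would apply clause $(ii)$ of Theorem \ref{FU} to $f_m\in\fu$, obtaining a set $U_{0}=\{u_{n}\mid n\in\N\}\in\U$ with $f_m(u_n)<u_{n+1}-u_n$, and in particular $u_{n+1}>f_m(u_n)$. Discarding from $U_{0}$ the finitely many indices with $u_n<N$ yields $U\in\U$ on which $u_{n+1}>f_m(u_n)\ge f(u_n)$, which is exactly the required property.

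There is no real obstacle: once one has $f_m\in\fu$---an induction on $m$ via Proposition \ref{clos}(1), with base case $f_0(n)=n+1\in\fu$ immediate from Proposition \ref{qseleq}(2) since $f_0$ has polynomial growth---the corollary is a straightforward unwinding of the definition of $\fu$ together with Theorem \ref{FU}, with the finite truncations absorbed harmlessly by $\U$-equivalence and by passage to a cofinite subset of $U_{0}$.
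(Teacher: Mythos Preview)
Your proposal is correct and follows exactly the approach the paper intends: the paper gives no explicit proof, but the preceding remark (that each Ackermann function $f_m$ lies in $\fu$ via $f_{m+1}\le\widetilde{f_m}$ and Proposition~\ref{clos}(1), and that every primitive recursive function is eventually dominated by some $f_m$) is precisely what you unpack. Your handling of the two conclusions via the defining property of $\fu$ and Theorem~\ref{FU}$(ii)$, together with the harmless finite truncations, is the routine verification the paper leaves to the reader.
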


\smallskip

As proved at the beginning of this section, one has the 
following implications

\smallskip
\begin{center}
    \emph{selective\ \ $\Imp$\ \ quasi-selective\ \ $\Imp$\  P-point.}
\end{center}

\smallskip
\noindent  Not even the existence of P-points can be proven in \zfc\ 
(see \pes\ 
\cite{wi,she}), so the 
question as to whether the above three classes of ultrafilters are 
distinct only makes sense under additional hypotheses. 

However the following holds in \zfc:
\begin{proposition}\label{blp}
Assume that the ultrafilter $\U$ is not a Q-point. Then there exists 
an ultrafilter $\U'\cong \U$ that is not quasi-selective.    
\end{proposition}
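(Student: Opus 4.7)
The plan is to reduce to the case where $\U$ has no selector for some pair-partition of $\N$, rearrange this partition canonically via a permutation, and exhibit a short explicit function bounded by the identity that fails to be $\U'$-equivalent to any nondecreasing function.

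First I would establish the equivalence: $\U$ is a Q-point if and only if $\U$ has a selector (i.e.\ a set meeting each block in at most one point) for every partition of $\N$ into pairs. The forward direction is immediate; for the converse I would induct on $k$, showing that a pair-selecting ultrafilter selects from every partition whose blocks have size $\le 2^{k+1}$: given such $\{A_n\}$, split each $A_n$ into two halves of size $\le 2^k$ and apply the inductive hypothesis to find $V\in\U$ with $|V\cap A_n|\le 2$; then \emph{adaptively} pair-partition $V$, insisting that the two elements of every size-$2$ intersection $V\cap A_n$ sit together as a single pair, and pairing the leftover singletons across blocks. A pair-selector (for this partition, extended arbitrarily to $\N\setminus V$) intersected with $V$ is the desired selector. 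Applying this equivalence to the non-Q-point hypothesis yields a partition $\{P_m:m\in\N\}$ of $\N$ into pairs with no $\U$-selector.

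Next I would choose a bijection $\pi\colon\N\to\N$ with $\pi(P_m)=\{2m,2m+1\}$ for each $m$ and put $\U'=\pi(\U)\cong\U$; then $\{\{2m,2m+1\}:m\in\N\}$ has no $\U'$-selector. Define $g\colon\N\to\N$ by $g(0)=g(1)=0$, and $g(2m)=1$, $g(2m+1)=0$ for $m\ge 1$; clearly $g(x)\le x$ throughout. Suppose toward a contradiction that $g\equiv_{\U'}h$ for some nondecreasing $h$. On $U=\{x:g(x)=h(x)\}\in\U'$, the restriction $g|_U$ is nondecreasing; but $g$ strictly decreases on each pair $\{2m,2m+1\}$ with $m\ge 1$, forcing $|U\cap\{2m,2m+1\}|\le 1$ for all such $m$. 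Deleting the finite set $\{0,1\}$ from $U$ then produces an element of $\U'$ that is a selector for the pair-partition, contradicting the previous paragraph. Hence $g$ witnesses that $\U'$ is not quasi-selective.

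The only substantive obstacle is the adaptive pairing in the reduction step: a naive pair-partition of $V$ could split some size-$2$ set $V\cap A_n$ across two different pairs, allowing a pair-selector to hit both of its elements and spoiling the induction. Once the reduction to pair-partitions is secured, the remainder of the proof is a direct and short calculation.
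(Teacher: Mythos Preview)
Your reduction to pair partitions is where the argument breaks. The induction you sketch correctly shows that a pair-selecting ultrafilter selects from every partition into blocks of size at most $2^{k+1}$, for each fixed $k$; but this does not reach partitions into blocks of \emph{unbounded} finite size, and hence does not yield the Q-point property. In fact the equivalence you assert is false. Every quasi-selective ultrafilter is already pair-selecting: given a pair partition $\{\{a_m<b_m\}\}$, set $g(a_m)=1$, $g(b_m)=0$, and $g(0)=0$; then $g(n)\le n$ everywhere, and on any set where $g$ is nondecreasing the $1$-values must follow the $0$-values, so at most one of $a_m,b_m$ can be present for each $m$ with $a_m\ge 1$. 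Yet this very paper constructs, under \CH, quasi-selective ultrafilters that are not Q-points (Theorem~\ref{constr}). For such a $\U$ every pair partition has a $\U$-selector, and your construction cannot even begin---precisely in the cases the proposition is designed to handle.

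The paper's proof sidesteps the reduction and runs your second-paragraph idea directly on the original witnessing partition $\{F_n\}$. One chooses pairwise disjoint $G_n\subseteq\N$ with $\min(G_n)>|G_n|=|F_n|$, lets $f$ be a bijection sending each $F_n$ onto $G_n$, and sets $\U'=f(\U)$. The function $g$ that restricts to the strictly decreasing bijection $G_n\to[0,|G_n|)$ on each $G_n$ (and is $0$ elsewhere) satisfies $g(x)\le x$ by the size condition; any set on which $g$ is nondecreasing meets each $G_n$ at most once, and pulling back under $f$ produces a $\U$-selector for $\{F_n\}$, a contradiction. So the fix is simply to keep the arbitrary finite blocks and push them high enough that the decreasing map stays below the identity, rather than trying to cut them down to pairs first.
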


\dim\
Let \scr U be an ultrafilter that is not a Q-point, so there is a partition 
of $\N$ into finite sets $F_n$ $(n\in\omega)$ such that every set 
in \scr U meets some $F_n$ in more than one point.  Inductively choose 
pairwise disjoint sets $G_n\subseteq\N$ such that, for each $n$,
\[
\min(G_n)>|G_n|=|F_n|.
\]
Let $f:\N\to\bigcup_n G_n$ be such that the restrictions 
$f\restr F_n$ are bijections $f\restr F_n:F_n\to G_n$ for all $n$.  
Since the $G_n$ are pairwise disjoint, $f$ is one-to-one, and therefore 
the ultrafilter $\scr V=f(\scr U)$ is isomorphic to \scr U.  We shall complete 
the proof by showing that \scr V is not quasi-selective.

Define $g:\bigcup_nG_n\to\N$ by requiring that, for each $n$, the 
restriction $g\restr G_n$ is the unique strictly decreasing bijection 
from $G_n$ to the initial segment $[0,|G_n|)$ of $\N$.  Notice that 
the values $g$ takes on $G_n$ are all $<\min(G_n)$; thus $g(x)<x$ for 
all $x\in G_n$.  Extend $g$ to all of $\N$ by setting $g(x)=0$ for 
$x\notin\bigcup_nG_n$.  Now $g:\N\to\N$ and $g(x)\leq x$ for all $x$.  
If \scr V were quasi-selective, there would be a set $A\in\scr V$ on which $g$ 
is non-decreasing.  Since $g$ is strictly decreasing on each $G_n$, each 
intersection $A\cap G_n$ would contain at most one point.  Therefore each 
of the pre-images  $f^{-1}(A)\cap F_n$ would contain at most one point.  
But from $A\in\scr V$, we infer that $f^{-1}(A)$ is in \scr U and therefore
meets some $F_n$ in at least two points.  This contradiction shows that \scr 
V is not quasi-selective.

\qed

Hence the mere existence of a non-selective P-point yields also the 
existence 
of non-quasi-selective P-points. So the second 
implication can be reversed only if the three classes are the same. 
Recall that this possibility has been shown consistent by Shelah 
(see \cite[Section~XVIII.4]{she}).

We conclude this section by stating a theorem that settles the question 
under the Continuum Hypothesis $\CH$:
\begin{thm}
\label{blch} Assume \emph{$\CH$}.
Then there exist $2^\mathfrak{c}$ pairwise non-isomorphic
P-points that are not quasi-selective, and $2^\mathfrak{c}$ 
pairwise non-isomorphic
quasi-selective ultrafilters that are not selective.
\end{thm}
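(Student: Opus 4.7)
The plan is to carry out two transfinite constructions of length $\omega_1$ (using \textsf{CH}): one produces $2^{\mathfrak{c}}$ pairwise non-isomorphic P-points that are not quasi-selective, the other produces $2^{\mathfrak{c}}$ pairwise non-isomorphic quasi-selective ultrafilters that are not selective. In both cases the core of the construction is a standard P-point (resp.\ quasi-selective) construction running through the $\omega_1$ many countable partitions of $\N$, and the $2^{\mathfrak{c}}$ count is obtained by the classical Kunen-type branching device: at every stage $\alpha$ one splits the next candidate set $X_\alpha$ into two almost-disjoint infinite halves $X_\alpha^0, X_\alpha^1$ each still satisfying the running requirements, so as to produce a binary tree of filter bases whose $2^{\omega_1}$ cofinal branches determine the ultrafilters, with pairwise non-isomorphism arranged by diagonalizing, in the usual way, against a bookkept list of candidate isomorphisms.

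For the first family, fix the dyadic partition $F_n = [2^n, 2^{n+1})$ of $\N$ and let $g:\N\to\N$ be strictly decreasing on each $F_n$ with values in $[0,|F_n|)$, so that $g(x)\le x$ for every $x$. Enumerate all countable partitions of $\N$ in order type $\omega_1$, and inductively build an almost-decreasing sequence $\langle X_\alpha : \alpha < \omega_1\rangle$ of infinite subsets of $\N$ so that: (i) at each stage the usual P-point diagonalization is performed against the next partition; and (ii) each $X_\alpha$ meets infinitely many $F_n$ in at least two points. Clause (ii) is preserved under pseudo-intersection because $|F_n|\to\infty$, and it forces every element of the generated ultrafilter $\U$ (each of which almost contains some $X_\alpha$) to meet some $F_n$ in at least two points. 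Exactly as in the proof of Proposition~\ref{blp}, this forbids $g$ from being $\U$-equivalent to any nondecreasing function, so $\U$ is a P-point that is not quasi-selective.

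For the second family we invoke the construction from Section~\ref{qsconst}, which produces a quasi-selective ultrafilter under \textsf{CH}. Non-selectivity is secured by arranging non-rapidity: fix an increasing function $f$ and require at each stage $\alpha$ that $X_\alpha$ is not thin enough to witness rapidity at $f$; by the corollary to Proposition~\ref{missel} (quasi-selective $+$ rapid $=$ selective), this forces non-selectivity. Combining this construction with the branching device of the first paragraph then yields $2^{\mathfrak{c}}$ pairwise non-isomorphic quasi-selective non-selective ultrafilters. The principal technical obstacle in both families is the non-isomorphism bookkeeping: ensuring that the $2^{\omega_1}$ branches yield pairwise non-isomorphic ultrafilters requires a careful enumeration of potential Rudin--Keisler maps between partial filters and a diagonal argument that destroys each such map along sufficiently generic branches, which is standard but delicate.
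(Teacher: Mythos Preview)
Your approach is workable in outline for the first assertion but is harder than necessary, and for the second assertion it has a real gap.

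For the first assertion the paper does not build a branching tree by hand. It simply invokes the well-known fact that under \textsf{CH} there exist $2^{\mathfrak c}$ pairwise non-isomorphic \emph{non-selective} P-points, and then applies Proposition~\ref{blp}: a non-selective P-point is not a Q-point, so by that proposition it has an isomorphic copy that is not quasi-selective. These copies are still P-points (being a P-point is an isomorphism invariant) and remain pairwise non-isomorphic. Your direct construction can be made to work, but note that once you have $2^{\mathfrak c}$ \emph{distinct} ultrafilters, pairwise non-isomorphism already follows by counting (each isomorphism class has size at most $\mathfrak c$); no diagonalization against RK-maps is needed.

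For the second assertion your plan has genuine difficulties. The non-rapidity clause is unnecessary---the construction of Theorem~\ref{constr} already produces a $\V$ that is not even a Q-point, since the finite-to-one map $p$ is one-to-one on no set of $\V$---and potentially in tension with quasi-selectivity, since by Proposition~\ref{clos} and its corollary any quasi-selective ultrafilter is automatically rapid for a very large class of functions. More seriously, grafting a binary splitting onto the construction of Section~\ref{qsconst} is not straightforward: that construction maintains \emph{largeness} of the sets $A_\alpha$, a condition defined through the norm $\nu$, and it is not clear how to split a large set into two disjoint large pieces (for instance, splitting according to the parity of the block index $n$ fails, because the selective $\U$ concentrates on one parity class). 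The paper avoids all of this. Theorem~\ref{constr} builds, above each selective $\U$, a quasi-selective non-selective $\V$ satisfying the partition relation $\N\to[\V]^2_3$; by \cite{bl74} this relation forces $\U$ to be, up to isomorphism, the \emph{unique} nonprincipal ultrafilter strictly Rudin--Keisler below $\V$. Hence non-isomorphic $\U$'s yield non-isomorphic $\V$'s, and since \textsf{CH} gives $2^{\mathfrak c}$ pairwise non-isomorphic selective ultrafilters, the count follows at once. The key idea you are missing is that the weak Ramsey property supplies Rudin--Keisler rigidity, which replaces the branching and the isomorphism bookkeeping altogether.
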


The first assertion of this
theorem follows by combining Proposition \ref{blp} with the known fact
that $\CH$ implies the existence of $2^{\mathfrak c}$ non-isomorphic
non-selective P-points.
The rather technical proof of the second assertion of the theorem is 
contained in the next section. A few  
open questions involving quasi-selective 
ultrafilters are to be found in the final Section \ref{froq}.

\section{A construction of quasi-selective ultrafilters}\label{qsconst}

This section is entirely devoted to the proof of the following 
theorem, which in turn will yield the second assertion of Theorem \ref{blch} above.

\begin{thm} \label{constr}
Assume \emph{\CH}.  For every selective ultrafilter \scr U, there is a non-selective but 
quasi-selective ultrafilter \scr V above \scr U in the Rudin-Keisler 
ordering.\footnote{
~Recall that \scr V is above \scr U in the Rudin-Keisler 
(pre)ordering if there exists a function $f$ such that $\U=f(\V)$.}  
Furthermore, \scr V can be chosen to satisfy the partition relation 
$\N\to[\scr V]^2_3$.
\end{thm}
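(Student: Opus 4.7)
The plan is a transfinite construction of length $\omega_1$ using \CH. Fix an interval partition $\{I_n : n \in \N\}$ of $\N$ with $|I_n|$ growing very rapidly --- fast enough that the Erd\H{o}s--Szekeres and finite Ramsey bounds needed at each stage fit comfortably inside each $I_n$. Let $\pi : \N \to \N$ send every $k \in I_n$ to $n$. The goal is to construct $\V$ so that every $V \in \V$ satisfies $\pi[V] \in \U$ and $|V \cap I_n| \ge 2$ for $\U$-many $n$. This will force $\pi(\V) = \U$, placing $\V$ above $\U$ in the Rudin--Keisler order, and it will prevent $\pi$ from being one-to-one on any $V \in \V$, so $\V \not\cong \U$ and hence $\V$ is not selective (selective ultrafilters being $\le_{RK}$-minimal among nonprincipal ultrafilters).

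Using \CH, enumerate in type $\omega_1$ all triples $\langle A_\alpha, g_\alpha, c_\alpha \rangle$ where $A_\alpha \subseteq \N$, $g_\alpha : \N \to \N$ satisfies $g_\alpha(n) \le n$, and $c_\alpha : [\N]^2 \to 3$. By recursion on $\alpha < \omega_1$ I build a $\subseteq^*$-decreasing sequence of sets $V_\alpha \subseteq \N$ and a $\supseteq$-decreasing sequence $U_\alpha \in \U$ such that $V_\alpha \subseteq \pi^{-1}(U_\alpha)$, the intra-block size $|V_\alpha \cap I_n|$ stays above a prescribed slowly-decreasing target for $n \in U_\alpha$, and $V_{\alpha+1}$ (i)~decides $A_\alpha$, (ii)~makes $g_\alpha$ nondecreasing on $V_{\alpha+1}$, and (iii)~leaves $c_\alpha$ using at most two colors on $[V_{\alpha+1}]^2$. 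The ultrafilter $\V$ is then generated by $\{V_\alpha : \alpha < \omega_1\}$; (ii) ranging over all $g_\alpha$ yields quasi-selectivity, (iii) ranging over all $c_\alpha$ yields $\N \to [\V]^2_3$, and the control on block sizes yields non-selectivity via the RK argument above.

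The three successor-stage tasks are handled as follows. Deciding $A_\alpha$: in each block keep the $A_\alpha$- or $(\N\setminus A_\alpha)$-majority half, then let $\U$ choose globally. Quasi-selectivity for $g_\alpha$: within each block $V_\alpha \cap I_n$ apply Erd\H{o}s--Szekeres to extract a monotone-for-$g_\alpha$ sub-block of size $\ge \sqrt{|V_\alpha \cap I_n|}$, and let $\U$ decide whether the nondecreasing or strictly decreasing case prevails on a $\U$-set. In the nondecreasing case keep the sub-blocks; in the decreasing case, use that $g_\alpha(x) \le x < \min I_{n+1}$ together with the selectivity of $\U$ applied to the function recording $\max\{g_\alpha(x) : x \in V_\alpha \cap I_n\}$ to thin $U_\alpha$ until consecutive surviving blocks realize global nondecreasingness. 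Weak Ramseyness for $c_\alpha$: apply finite Ramsey inside each $I_n$ to extract a homogeneous sub-block, then use selectivity of $\U$ --- exactly as in \cite{bl74,qswr} --- to arrange that only two colors appear on cross-block pairs; combining intra- and cross-block constraints yields at most two colors on $[V_{\alpha+1}]^2$. At a limit stage $\lambda$, use that $\U$ is a P-point (being selective) to pseudo-intersect $\{U_\alpha : \alpha < \lambda\}$ inside $\U$, and simultaneously diagonalize the intra-block refinements.

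The main obstacle is the bookkeeping needed to preserve the invariant $|V_\alpha \cap I_n| \ge 2$ on $\U$-many $n$ through all $\omega_1$ stages: each of the three thinnings shrinks blocks, and the losses must be budgeted so that enough room survives every stage. This is handled by choosing $|I_n|$ to grow fast enough (a tower-type rate) and by tracking a precise invariant $|V_\alpha \cap I_n| \ge h_\alpha(n)$ that decreases along $\alpha$ only by controlled amounts. The genuinely delicate point is propagating this invariant across limit stages, where the selectivity of $\U$ (not just its P-point property) is essential to simultaneously diagonalize the $U_\alpha$'s and the per-block data; this coordination between the three successor requirements and the limit-stage pseudo-intersection is the technical heart of the argument.
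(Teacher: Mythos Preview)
Your overall architecture matches the paper's: an $\omega_1$-length recursion producing a $\subseteq^*$-decreasing tower inside an interval partition, with the projection $\pi$ witnessing $\V >_{RK}\U$ and non-selectivity coming from $\ge 2$ points surviving in each block. But your treatment of quasi-selectivity has a real gap.

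Look at your ``decreasing case'': Erd\H{o}s--Szekeres hands you, for $\U$-many $n$, a sub-block on which $g_\alpha$ is strictly decreasing. You then propose to ``thin $U_\alpha$ until consecutive surviving blocks realize global nondecreasingness,'' but this cannot work. No matter how you thin $U_\alpha$, every surviving block still carries at least two points with $g_\alpha$ strictly decreasing between them, so $g_\alpha$ is \emph{not} nondecreasing on $V_{\alpha+1}$. The inequality $g_\alpha(x)\le x<\min I_{n+1}$ you invoke controls only cross-block order and says nothing about intra-block behavior. The only escape is to drop to one point per block on that $\U$-set, which destroys exactly the invariant you need for non-selectivity.

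What is actually required is to \emph{rule the decreasing case out}, and ``rapidly growing $|I_n|$'' alone does not do this: after a single round of Erd\H{o}s--Szekeres (or finite Ramsey) the surviving sub-block may sit at the top of $I_n$, with its minimum far larger than its size, and then nothing prevents the next $g$ from being strictly decreasing on it. The paper's key device is to track not a size lower bound but membership in $\scr L=\{A:\min(A)+2<|A|\}$ and its Ramsey iterates $\rho^m\scr L$; the invariant (``largeness'') is that $A_\alpha\cap I_n\in\rho^m\scr L$ with $m>\sqrt n+k$ for every $k$, $\U$-almost surely in $n$. Each block-wise $2$-coloring homogenization drops $m$ by exactly one, so the invariant degrades in a controlled way, and at every stage $A_\alpha\cap I_n\in\scr L$ still holds. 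Now if $g\le\mathrm{id}$ were strictly decreasing on some $A\in\scr L$, then $\min(A)\ge g(\min A)\ge |A|-1$, contradicting $\min(A)+2<|A|$. So the decreasing case simply never arises.

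This reshapes the whole construction: the paper enumerates only $2$-colorings $F_\alpha:[\N]^2\to 2$ and arranges that $F_\alpha$ is constant on each $[A_{\alpha+1}\cap I_n]^2$; the facts that $\V$ is an ultrafilter, that $\N\to[\V]^2_3$, and that $\V$ is quasi-selective are all derived \emph{a posteriori} by feeding suitable colorings into this single mechanism. The limit stage is handled by using countable saturation of the $\U$-ultrapower to pick a function strictly between $n\mapsto\lceil\sqrt n\,\rceil$ and every $\gamma(A'_k)$, which is essentially your ``diagonalize the intra-block refinements'' made precise via the norm $\nu$. Your vaguer invariant $|V_\alpha\cap I_n|\ge h_\alpha(n)$ is not the right bookkeeping object; the quantity that must be tracked is the relation between $\min$ and size, and that is exactly what $\scr L$ and $\nu$ encode.
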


The square-bracket partition relation in the theorem means that, if 
$[\N]^2$ is partitioned into 3 pieces, then there is a set $H\in\scr V$ such 
that $[H]^2$ meets at most 2 of the pieces.  This easily implies by induction 
that, if $[\N]^2$ is partitioned into any finite number of pieces, then there 
is a set $H\in\scr V$ such that $[H]^2$ meets at most 2 of the pieces.  It is 
also known (\cite{bl74}) to imply that \scr V is a P-point and that
\scr U is, up to isomorphism, the only non-principal ultrafilter
strictly below \scr V in the Rudin-Keisler ordering. 

It will be convenient to record some preliminary information before starting 
the proof of the theorem.  Suppose \scr X is an upward-closed (with respect 
to $\subseteq$) family of finite subsets of $\N$.  Call \scr X \emph{rich} 
if every infinite subset of $\N$ has an initial segment in \scr X.  
(This notion resembles Nash-Williams's notion of a \emph{barrier}, but it is not  
the same.)  Define $\rho\scr X$ to be 
the family of those finite $A\subseteq\N$ such that $A\to(\scr X)^2_2$, i.e., 
every partition of $[A]^2$ into two parts has a homogeneous set in \scr X.  
(The notation $\rho$ stands for ``Ramsey''.)

\begin{lemma}
If \scr X is rich, then so is $\rho\scr X$.
\end{lemma}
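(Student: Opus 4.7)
The plan is to argue by contradiction via a classical pairing of K\"onig's lemma with the infinite Ramsey theorem. Suppose toward a contradiction that some infinite set $B=\{b_{0}<b_{1}<\ldots\}\subseteq\N$ fails to have any initial segment in $\rho\scr X$. Writing $B_{n}=\{b_{0},\ldots,b_{n-1}\}$, this means that for every $n$ one can exhibit a $2$-coloring $c_{n}\colon[B_{n}]^{2}\to 2$ admitting no homogeneous set in $\scr X$.

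Next I would glue these partial witnesses into a single coloring of $[B]^{2}$. Let $T$ be the tree whose nodes at level $n$ are those $2$-colorings of $[B_{n}]^{2}$ possessing no homogeneous set in $\scr X$, ordered by restriction; passage from level $n$ to level $n+1$ requires specifying colors on only the $n$ new pairs $\{b_{i},b_{n}\}$, so $T$ is finitely branching. By hypothesis every level of $T$ is nonempty, so K\"onig's lemma produces an infinite branch, which is nothing but a coloring $c\colon[B]^{2}\to 2$ all of whose restrictions $c\restr[B_{n}]^{2}$ admit no homogeneous set in $\scr X$.

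Finally I would invoke the infinite Ramsey theorem on $c$ to extract an infinite $c$-monochromatic set $H\subseteq B$, and then use richness of $\scr X$ to produce an initial segment $H'$ of $H$ lying in $\scr X$. Since $H'$ is finite, $H'\subseteq B_{n}$ for some $n$, and $H'$ is a $c$-monochromatic subset of $B_{n}$ that belongs to $\scr X$, contradicting the defining property of the branch. The only step that really wants checking, and the closest thing to an obstacle, is that the K\"onig's-lemma extraction actually preserves the ``no homogeneous set in $\scr X$'' property; this is automatic, because any $c$-homogeneous subset of $B_{n}$ is also $c\restr[B_{n}]^{2}$-homogeneous, so the failure at each finite stage genuinely transfers to the global $c$, where infinite Ramsey then forbids it.
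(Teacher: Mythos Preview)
Your proof is correct and follows essentially the same approach as the paper's: assume an infinite set has no initial segment in $\rho\scr X$, organize the witnessing bad $2$-colorings of its finite initial segments into a finitely branching tree, apply K\"onig's lemma to obtain a global coloring of pairs, then use the infinite Ramsey theorem together with richness of $\scr X$ to derive a contradiction. The paper's argument is identical in structure, differing only in notation.
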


\begin{pf}
This is a standard compactness argument, but we present it for the sake of 
completeness.  Let $S$ be any infinite subset of $\N$, and, for each $n\in\N$, 
let $S_n$ be the set of the first $n$ elements of $S$.  We must show that 
$S_n\in\rho\scr X$ for some $n$.  Suppose not.  Then, for each $n$, there 
are  counterexamples, i.e., partitions $F:[S_n]^2\to2$ with no homogeneous 
set in \scr X.  These counterexamples form a tree, in which the predecessors 
of any $F$ are its restrictions to $[S_m]^2$ for smaller $m$.  This tree is 
infinite but finitely branching, so K\"onig's infinity lemma gives us a path 
through it.  The union of all the partitions along this path is a partition 
$G:[S]^2\to2$, and by Ramsey's theorem it has an infinite homogeneous set 
$H\subseteq S$.  Since \scr X is rich, it contains $H\cap S_n$ for some $n$. 
But then one of our counterexamples, namely $G\restr[S_n]^2$, has a homogeneous
set in \scr X, so it isn't really a counterexample.  This contradiction 
completes the proof of the lemma.
\qed
\end{pf}

\medskip
Of course, we can iterate the operation $\rho$.  The lemma implies that, 
if \scr X is rich, then so is $\rho^n\scr X$ for any finite $n$.  Notice also 
that we have $\scr X\supseteq\rho\scr X\supseteq \rho^2\scr X\supseteq\dots$.

We shall apply all this information to a particular \scr X, namely
\[
\scr L=\{A\text{ finite, nonempty }\subseteq\N\,\mid\,\min(A)+2<|A|\},
\]
which is obviously rich.  Observe that any $A\in\scr L$ has $|A|\geq3$.  
It easily follows that any $A\in\rho^n\scr L$ has $|A|\geq3+n$.  
(In fact the sizes of sets in $\rho^n\scr L$ grow very rapidly, but we 
don't need this fact here.)  In particular, no finite set can belong to 
$\rho^n\scr L$ for arbitrarily large $n$, and so we can define a norm for 
finite sets by
\[
\nu:[\N]^{<\og}\to\N:A\mapsto\text{ least }n\text{ such that }
A\notin\rho^n\scr L.
\]

Because each $\rho^n\scr L$ is rich, we can partition $\N$ into consecutive 
finite intervals $I_n$ such that $I_n\in\rho^n\scr L$ for each $n$.  Define 
$p:\N\to\N$ to be the function sending all elements of any $I_n$ to $n$; so 
$p^{-1}[B]=\bigcup_{n\in B}I_n$ for all $B\subseteq\N$.

For any $X\subseteq\N$, define its growth  $\gamma(X):\N\to\N$ to be the 
sequence of norms of its intersections with the $I_n$'s:
\[
\gamma(X)(n)=\nu(X\cap I_n).
\]
Notice that, by our choice of the $I_n$, $\gamma(\N)(n)>n$ for all $n$.

\smallskip
With these preliminaries, we are ready to return to ultrafilters and prove the 
theorem. The proof uses ideas from \cite{bl74} and \cite{ros}, but some 
modifications are needed, and so we present the proof in detail.

\medskip

{\bf Proof of Theorem~\ref{constr}.}
Assume that \CH\ holds, and let $\U$ be an arbitrary selective ultrafilter
on 
$\N$.  We adopt the quantifier notation for ultrafilters: 
\begin{center}
    $(\scr Un)\,\phi(n)$ means ``for \scr U-almost all $n$, $\phi(n)$ holds,'' 
    \ie, $\{n\mid\phi(n\}\in\scr U$.
\end{center}
Call a subset $X$ of $\N$ \emph{large} if
\[
(\forall k\in\N)(\scr U n)\,\gamma(X)(n)>\sqrt n+k;
\]
equivalently, in the ultrapower of $\N$ by \scr U, $[\gamma(X)]$ is 
infinitely larger than $[\lceil\sqrt n\,\rceil]$.  Since $n$ is asymptotically 
much larger than $\sqrt n$, we have that $\N$ is large.

Using \CH, list all partitions $F:[\N]^2\to\{0,1\}$ in a sequence 
\sq{F_\alpha\mid\alpha<\aleph_1}
of length $\aleph_1$.  We intend to build a sequence 
\sq{A_\alpha\mid\alpha<\aleph_1} of subsets of $\N$ with the following properties:
\begin{lsnum}
\item Each $A_\alpha$ is a large subset of $\N$.
\item If $\alpha<\beta$, then $A_\beta\subseteq A_\alpha$ modulo \scr 
U, \ie, $p[A_\beta-A_\alpha]\notin\scr U$, \ie, $(\scr Un)\,A_\beta\cap 
I_n\subseteq A_\alpha$.
\item For each $n$, $F_\alpha$ is constant on $[A_{\alpha+1}\cap I_n]^2$.
\end{lsnum}
After constructing this sequence, we shall show how it yields the desired 
ultrafilter \scr V.

We construct $A_\alpha$ by induction on $\alpha$, starting with $A_0=\N$. 
We have already observed that requirement~(1) is satisfied by $\N$; the other 
two requirements are vacuous at this stage.

Before continuing the construction, notice that the relation of inclusion 
modulo \scr U is transitive.

At a successor step, we are given the large set $A_\alpha$ and we must find 
a large $A_{\alpha+1} \subseteq A_\alpha$ modulo \scr U such that $F_{\alpha}$ 
is constant on each $[A_{\alpha+1}\cap I_n]^2$.  Transitivity and the 
induction hypothesis then ensure that $A_{\alpha+1}$ is included in each 
earlier $A_\xi$ modulo \scr U.  
(We shall actually get $A_{\alpha+1}\subseteq A_\alpha$, not just modulo 
\scr U, but the inclusions in the earlier $A_\xi$'s will generally be only 
modulo \scr U.)  We define $A_{\alpha+1}$ by defining its intersection with 
each $I_n$; then of course $A_{\alpha+1}$ will be the union of all these 
intersections.

If $\gamma(A_\alpha)(n)\leq1$, then set $A_{\alpha+1}\cap I_n=\emp$.  Note that 
the set of all such $n$'s is not in \scr U, because $A_\alpha$ is large.  If 
$\gamma(A_\alpha)(n)>1$, then $A_\alpha\cap I_n$ is in 
$\rho^{\gamma(A_\alpha)(n)-1}\scr L$, so it has a subset that is homogeneous 
for $F_\alpha\restr[A_\alpha\cap I_n]^2$ and is in 
$\rho^{\gamma(A_\alpha)(n)-2}\scr L$; let $A_{\alpha+1}\cap I_n$ be such 
a subset.

This choice of $A_{\alpha+1}\cap I_n$ (for each $n$) clearly ensures that 
requirements~(2) and (3) are preserved.  For requirement~(1), simply observe 
that $(\scr Un)\,\gamma(A_{\alpha+1})(n)\geq\gamma(A_\alpha)(n)-1$.

For the limit step of the induction, suppose $\beta$ is a countable limit 
ordinal and we already have $A_\alpha$ for all $\alpha<\beta$.  Choose an 
increasing $\N$-sequence of ordinals \sq{\alpha_i:i\in\N} with limit
$\beta$.  
Let $A'_n=\bigcap_{i\leq n}A_{\alpha_i}$.  So the sets $A'_n$ form a 
decreasing sequence.  Because of induction hypothesis~(2), each $A'_n$ is 
equal modulo \scr U to $A_{\alpha_n}$ (i.e., each includes the other modulo
\scr U) and is therefore large.  We shall find a large set $A_{\beta}$ that 
is included modulo \scr U in all of the $A'_n$, hence in all the $A_{\alpha_n}$, 
and hence in all the $A_\alpha$ for $\alpha<\beta$.  Thus, we shall preserve 
induction hypotheses~(1) and (2); requirement~(3) is vacuous at limit stages.

We shall obtain the desired $A_\beta$ by defining its intersection with every 
$I_n$.

Because the ultrapower of $\N$ by \scr U is countably saturated, we can 
fix a function $g:\N\to\N$ such that its equivalence class $[g]$ in the 
ultrapower is below each $[\gamma(A'_n)]$ but above 
$[x\mapsto\lceil\sqrt x\,\rceil+k]$ for every $k\in\N$.  
For each $x\in\N$, define $h(x)$ to be the largest number $q\leq x$ such 
that $g(x)\leq\gamma(A'_q)(x)$, or 0 if there is no such $q$.  Finally, 
set $A_\beta\cap I_x =A'_{h(x)}\cap I_x$.  We verify that this choice of 
$A_\beta$ does what we wanted.

For any fixed $n$, \scr U-almost all $x$ satisfy $g(x)\leq\gamma(A'_n)(x)$, 
and $x\geq n$, and therefore $h(x)\geq n$, and therefore 
$A'_{h(x)}\cap I_x\subseteq A'_n\cap I_x$.  Thus, 
$A_\beta\subseteq A'_n$ modulo \scr U for each $n$.  
As we saw earlier, this implies requirement~(2).

Furthermore, for \scr U-almost all $x$,
\[
\gamma(A_\beta)(x)=\gamma(A'_{h(x)})(x)\geq g(x),
\]
and so our choice of $g$ ensures that $A_\beta$ is large.

This completes the construction of the sequence \sq{A_\alpha:\alpha<\aleph_1} 
and the verification of properties~(1), (2), and (3).  We shall now use this 
sequence to construct the desired ultrafilter.

We claim first that the sets $A_\alpha\cap p^{-1}[B]$, where $\alpha$ ranges over $\aleph_1$ and $B$ ranges over \scr U, constitute a filter base.  Indeed, the intersection of any two of them, say
\[
A_\alpha\cap p^{-1}[B]\cap A_{\alpha'}\cap p^{-1}[B']
\]
with, say, $\alpha\leq\alpha'$, includes 
$A_{\alpha'}\cap p^{-1}[B\cap B'\cap C]$, where, thanks to requirement~(2) 
above, $C$ is a set in \scr U such that $A_{\alpha'}\cap I_n\subseteq A_\alpha$ 
for all $n\in C$.

Let \scr V be the filter generated by this filterbase.  Because each $A_\alpha$ 
is large, because $[\gamma(A_\alpha\cap p^{-1}[B])]=[\gamma(A_\alpha)]$ in 
the \scr U-ultrapower for any $B\in\scr U$, and because largeness is 
obviously preserved by supersets, we know that every set in \scr V is large.

We claim next that \scr V is an ultrafilter.  To see this, let $X\subseteq\N$ 
be arbitrary, and consider the following partition $F:[\N]^2\to \{0,1\}$.  
If $X$ contains both or neither of $x$ and $y$, then $F(\{x,y\})=0$; otherwise 
$F(\{x,y\})=1$.  Requirement~(3) of our construction provides an 
$A_\alpha\in\scr V$ such that $F$ is constant on each $[A\cap I_n]^2$, say 
with value $f(n)$.  A set on which $F$ is constant with value 1 obviously 
contains at most two points, one in $X$ and one outside $X$.  As $A$ is large, 
we infer that $(\scr Un)\,f(n)=0$.  That is, for $\scr U$-almost all $n$, 
$A\cap I_n$ is included in either $X$ or $\N-X$.  As \scr U is an ultrafilter, 
it contains a set $B$ such that either $A\cap I_n\subseteq X$ for all $n\in B$ 
or $A\cap I_n\subseteq \N-X$ for all $n\in B$. Then $A\cap p^{-1}[B]\in\scr V$ 
is either included in or disjoint from $X$.  Since $X$ was arbitrary, this 
completes the proof that \scr V is an ultrafilter.

Since all sets in \scr V are large, the finite-to-one function $p$ is not 
one-to-one on any set in \scr V. Thus \scr V is not selective, in fact not 
even a Q-point.

Our next goal is to prove that $\N\to[\scr V]^2_3$.  Let an arbitrary 
$F:[\N]^2\to\{0,1,2\}$ be given.  We follow the custom of calling the values 
of $F$ colors, and we use the notation $\{a<b\}$ to mean the set $\{a,b\}$ and 
to indicate the notational convention that $a<b$.  We shall find two sets 
$X,Y\in\scr V$ such that all pairs $\{a<b\}\in[X]^2$ with $p(a)=p(b)$ have a 
single color and all pairs $\{a<b\}\in[Y]^2$ with $p(a)\neq p(b)$ have a single 
color (possibly different from the previous color).  Then $X\cap Y\in\scr V$ 
has the weak homogeneity property required by $\N\to[\scr V]^2_3$.

To construct $X$, begin by considering the partition 
$G:[\N]^2\to\{0,1\}$ obtained 
from $F$ by identifying the color 2 with 1.  By our construction of \scr V, 
it contains a set $A_\alpha$ such that, for each $n$, all pairs in 
$[A_\alpha\cap I_n]^2$ are sent to the same color $g(n)$ by $G$.  
\scr U, being an ultrafilter, contains a set $B$ on which $g$ is constant.  
Then $A_\alpha\cap p^{-1}(B)$ is a set in \scr V such that all pairs $\{a<b\}$ in 
$A_\alpha\cap p^{-1}(B)$ with $p(a)=p(b)$ have the same $G$-color.  So 
the $F$-colors 
of these pairs are either all 0 or all in $\{1,2\}$.  If they are all 0, then 
$A_\alpha\cap p^{-1}(B)$ serves as the desired $X$.  If they are all in $\{1,2\}$, 
then we repeat the argument using $G'$, obtained from $F$ by 
identifying 2 with 0.  We obtain a set $A_{\alpha'}\cap p^{-1}(B')\in\scr V$ such 
that the $F$-colors of its pairs with $p(a)=p(b)$ are either all 1 or all 
in $\{0,2\}$.  Then $A_\alpha\cap p^{-1}(B)\cap A_{\alpha'}\cap 
p^{-1}(B')\in\scr V$ serves 
as the desired $X$.

It remains to construct $Y$.  It is well known that selective ultrafilters have 
the Ramsey property.  So we can find a set $B\in\scr U$ such that all or none 
of the pairs $\{x<y\}\in[B]^2$ satisfy the inequality
\[
9^{|\bigcup_{z\leq x} I_z|} <y.
\]
If we had the ``none'' alternative here, then all elements $y\in B$ would be 
bounded by $9^{|\bigcup_{z\leq x}I_z|}$ where $x$ is the smallest element of $B$.  That is absurd, as $B$ is infinite, so we must have the ``all'' alternative.

For each $b\in p^{-1}[B]$, let $f_b$ be the function telling how $b$ is related 
by $F$ to elements in earlier fibers over $B$.  That is, if $p(b)=n\in B$, let 
the domain of $f_b$ be $\{a\in\N:p(a)<n\text{ and }p(a)\in B\}$, and define 
$f_b$ on this domain by $f_b(a)=F(\{a,b\})$.  Notice that the domain of $f_b$
has cardinality at most $|\bigcup_{z\leq m}I_z|$ where $m$ is the last element 
of $B$ before $n$ (or 0 if $n$ is the first element of $B$).  Since $f_b$ takes 
values in 3, the number of possible $f_b$'s, for $p(b)=n\in B$, is at most
\[
3^{|\bigcup_{z\leq m}I_z|}<\sqrt n,
\]
by the homogeneity property of $B$.

Define a new partition, $H:[\N]^2\to\{0,1\}$ by setting $H(\{b<c\})=0$ if $f_b$ 
and $f_c$ are defined and equal, and $H(\{b<c\})=1$ otherwise.  Proceeding as 
in the first part of the construction of $X$ above, we obtain a set 
$Z\in\scr V$ such that all pairs $\{b<c\}\in[Z]^2$ with $p(b)=p(c)$ have 
the same $H$-color.  That is, in each of the sets $Z\cap I_n$ ($n\in B$), 
either all the points $b$ have the same $f_b$ or they all have different 
$f_b$'s.  But $Z$ is large so the number of such points is, for \scr U-almost 
all $n$, larger than $\sqrt n$.  So, by the estimate above of the number of 
$f_b$'s, there are not enough of these functions for every $b$ to have a 
different $f_b$.  Thus, for \scr U-almost all $n\in B$, all $b\in Z\cap I_n$ 
have the same $f_b$.  Shrinking $B$ to a smaller set in \scr U (which we still 
call $B$ to avoid extra notation), we can assume that, for all $n\in B$, $f_b$ 
depends only on $p(b)$ as long as $b\in Z$.  Let us also shrink $Z$ to 
$Z\cap p^{-1}[B]$, which is of course still in \scr V.

Going back to the original partition $F$, we have that the color of a pair 
$\{a<b\}\in[Z]^2$ with $p(a)<p(b)$ depends only on $a$ and $p(b)$, because 
this color is $F(\{a<b\})=f_b(a)$ and $f_b$ depends only on $p(b)$.

For each $a\in Z$, let $g_a$ be the function, with domain equal to the part 
of $B$ after $p(a)$, such that $g_a(n)$ is the common value of $F(\{a<b\})$ 
for all $b\in Z\cap I_n$.  This $g_a$ maps a set in \scr U (namely a final
segment of $B$) into 3, so it is constant, say with value $j(a)$, on some 
set $C_a\in\scr U$.

Using again the Ramsey property of \scr U, we obtain a set $D\in\scr U$ such 
that $D\subseteq B$ and all or none of the pairs $\{x<y\}\in[D]^2$ satisfy
\[
(\forall a\in B\cap I_x)\,y\in C_a.
\] 
If we had the ``none'' alternative, then, letting $x$ be the first element 
of $D$, we would have that
\[
D\cap\bigcap_{a\in B\cap I_x}C_a=\emptyset.
\]
But this is the intersection of finitely many sets from \scr U, so it
cannot be empty.  This contradiction shows that we must have the
``all'' alternative.  In view of the definition of $C_a$, this means
that, when $a<b$ are in $Z\cap p^{-1}[D]$ and $p(a)<p(b)$, 

the $F$-color of $\{a<b\}$ depends only on $a$, not on $b$.  Since
there are only finitely many possible colors and since \scr V is an
ultrafilter, we can shrink $Z\cap p^{-1}[D]$ to a set $Y\in\scr V$ on
which the $F$-color of all such pairs is the same.

This completes the construction of the desired $Y$ and thus the proof that 
$\N\to[\scr V]^2_3$.

This partition relation, together with an easy induction argument, gives the 
following slightly stronger-looking result.   For any partition of 
$[\N]^2$ into any finite number of pieces, there is a set $H\in\scr V$ 
such that $[H]^2$ is included in the union of two of the pieces.  We shall 
need this for a partition into 6 pieces.

Finally, we prove that \scr V is quasi-selective.  Consider an arbitrary 
$f:\N\to\N$ such that $f(n)\leq n$ for all $n$; we seek a set in \scr V on 
which $f$ is non-decreasing.  Define a partition $F:[\N]^2\to6$ by setting 
\[
F(\{a<b\})=
\begin{cases}
0 &\text{if }p(a)=p(b)\text{ and }f(a)<f(b)\\
1 &\text{if }p(a)=p(b)\text{ and }f(a)=f(b)\\
2 &\text{if }p(a)=p(b)\text{ and }f(a)>f(b)\\
3 &\text{if }p(a)<p(b)\text{ and }f(a)<f(b)\\
4 &\text{if }p(a)<p(b)\text{ and }f(a)=f(b)\\
5 &\text{if }p(a)<p(b)\text{ and }f(a)>f(b)\\
\end{cases}
\]
Since $p$ is non-decreasing, the six cases cover all the possibilities.  Fix 
a set $H\in\scr V$ on whose pairs $F$ takes only two values.  The first of 
those two  values must be in $\{0,1,2\}$  and the second in $\{3,4,5\}$ 
because $p$ is neither one-to-one nor constant on any set in \scr V.  

If the second value were 5, then by choosing an infinite sequence 
$a_0<a_1<\dots$ in $H$ with $p(a_0)<p(a_1)<\dots$, we would get an infinite 
decreasing sequence $f(a_0)>f(a_1)>\dots$ of natural numbers.  Since this is 
absurd, the second value must be 3 or 4.

If the second value is 4, then any two elements of $H$ from different $I_n$'s 
have the same $f$ value.  But then the same is true also for any two elements 
of $H$ from the same $I_n$, because we can compare them with a third element 
of $H$ chosen from a different $I_m$.  So in this case $f$ is constant on $H$; 
in particular, it is non-decreasing, as desired.

So from now on, we may assume the second value is 3; $f$ is increasing on 
pairs in $H$ from different $I_n$'s.

Thus, if the first value is either 0 or 1, then $f$ is non-decreasing on all 
of $H$, as desired.  It remains only to handle the case that the first 
value is 2; we shall show that this case is impossible, thereby completing 
the proof of the theorem.

Suppose, toward a contradiction, that the first value were 2.  This means 
that, for each $n$, the restriction of $f$ to $H\cap I_n$ is strictly 
decreasing.  Temporarily fix $n$, and let $b$ be the smallest element of 
$H\cap I_n$.  Then $f(b)$ is the largest value taken by $f$ on $H\cap I_n$, 
and there are exactly $|H\cap I_n|$ such values.  Therefore, 
$f(b)\geq|H\cap I_n|-1$.  On the other hand, by the hypothesis on $f$, 
we have $f(b)\leq b$, and therefore $b\geq|H\cap I_n|-1$.

Now un-fix $n$.  We have just shown that 
\[
\min(H\cap I_n)\geq|H\cap I_n|-1,
\]
and so $H\cap I_n\notin\scr L$.  That is, 
\[
\gamma(H)(n)=\nu(H\cap I_n)=0
\]
for all $n$.  That contradicts the fact that, like all sets in \scr V, $H$ is 
large, and the proof of the theorem is complete.

\qed

\smallskip	   
Now, in order to deduce Theorem \ref{blch}, we have only to recall 
the well known fact that, under \CH, there are $2^{\ck}$ pairwise 
non-isomorphic selective ultrafilters. For each of them, say $\U$, Theorem 
\ref{constr} provides a non-selective quasi-selective $\V$ that is
Rudin-Keisler above $\U$. As remarked at the beginning of this section, 
according to \cite{bl74},
 there is a 
unique class of selective ultrafilters Rudin-Keisler below $\V$, 
which has been chosen so as to satisfy the partition relation
$\N\to[\scr V]^2_3$.
So all $\V$s are pairwise non-isomorphic.

Finally, as remarked at the end of Section \ref{qsu}, 
Proposition \ref{blp} allows for associating to each $\V$ an isomorphic 
non-quasi-selective P-point $\V'$.

\section{Equinumerosity of point sets}\label{equi}

In this section we study a notion of ``numerosity''
for \emph{point sets of natural numbers},
\emph{i.e.} for subsets of the spaces of $k$-tuples $\N^k$.
This numerosity will be defined by starting from
an equivalence relation of ``equinumerosity'' that
satisfies all the basic properties of equipotency
between finite sets.

For simplicity we follow the usual practice
and we identify Cartesian products with the corresponding
``concatenations''. That is, for every $A\subseteq\N^k$
and for every $B\subseteq\N^h$, we identify
$A\times B=\{(\vec{a},\vec{b})\mid \vec{a}\in A, \vec{b}\in B\}$
with:
$$A\times B=
\{(a_1,\ldots,a_k,b_1,\ldots,b_h)\mid
(a_1,\ldots,a_k)\in A\ \text{and}\ (b_1,\ldots,b_h)\in B\}.$$

\begin{definition}\label{eqn}
We call \emph{equinumerosity} an equivalence relation $\approx$
that satisfies the following properties
for all point sets $A,B$ of natural numbers:

\begin{enumerate}
\item[\euno]
$A\approx B$ if and only if $A\setminus B\approx B\setminus A$.
\item[\edue]
Exactly one of the following three conditions holds:
\begin{enumerate}
\item[(a)]
$A\approx B$\,;
\item[(b)]
$A'\approx B$ for some proper subset $A'\subset A$\,;
\item[(c)]
$A\approx B'$ for some proper subset $B'\subset B$.
\end{enumerate}
\item[\etre]
$A\* \{P\}\approx \{P\}\times A \eq A$ for every point $P$.
\item[\ecinque]
$A\approx A'\ \&\ B\approx B'\ \Rightarrow\
A\times B\approx A'\times B'$.

\end{enumerate}
\end{definition}

Some comments are in order.
Axiom \euno\ is but a compact
equivalent reformulation of the second and third common notions 
of Euclid's Elements (see \cite{eu}):
\begin{quotation}
    \emph{``If equals be added to equals,
    the wholes are equal''},
\end{quotation}
 and
\begin{quotation}
    \emph{``if equals be subtracted from equals, the
    remainders are equal''}.
\end{quotation}
 (A precise statement of this equivalence is 
given in Proposition \ref{sumdif} below.)
Notice that the first common notion 
\begin{quotation}
    \emph{``Things which are equal to the same 
    thing are also equal to one another''}
\end{quotation}
 is already secured by the assumption 
that equinumerosity is an equivalence relation.

The \emph{trichotomy property} of axiom \edue\ combines two natural
ideas: firstly that, given two sets, one is equinumerous to some 
subset of the other, and secondly 
that  no proper subset is equinumerous to the  
set itself. So \edue\ allows for a natural ordering of sizes that 
satisfies  the implicit assumption of the classical 
theory that \emph{(homogeneous) magnitudes are always comparable}, as well as  the fifth 
Euclidean common notion 
\begin{quotation}
    \emph{``The whole is greater than the 
    part''}.
\end{quotation}
We remark that both properties \euno\ and \edue\ hold
 for equipotency between finite sets, 
while both  fail badly for equipotency between infinite sets.

The third axiom \etre\ 
 can be viewed 
 as an instance of the 
 fourth Euclidean common notion 
 \begin{quote}
     \emph{``Things applying $[$exactly$]$ onto one 
      another are equal to one another''}.\footnote{~Equicardinality is 
      characterized by equipotency; similarly one might desire that
      equinumerosity  be 
      characterized by ``isometry'' witnessed by a suitable class of 
      bijections. This assumption seems \emph{prima facie} very demanding. 
      However, we shall see that it is fulfilled by the \emph{asymptotic 
      equinumerosities} of Section \ref{asy}.}
 \end{quote}
 In particular \etre\ incorporates the idea that any set has equinumerous 
 ``lifted copies'' in any higher 
 dimension (see Proposition \ref{lifdim} below).
 
  Axiom \ecinque\ is postulated so as  to allow for the natural 
 definition of \emph{multiplication} of numerosities, which
  admits  
  the numerosity of every singleton as an 
 identity by axiom \etre.
This multiplication, together with the natural addition of 
numerosities, as 
given by \emph{disjoint union}, 
  satisfy the properties of \emph{discretely ordered semirings}
 (see Theorem \ref{ring}). 
 
 Remark that we do not postulate here 
 \emph{commutativity} of product. On the one hand, this assumption is 
 unnecessary for the general treatment of numerosities; on the other 
 hand, commutativity follows from the given axioms in the case of 
 \emph{asymptotic} numerosities (see Section \ref{asy}).

\begin{proposition}\label{sumdif}
The Axiom \emph{\euno}\ is equivalent to the
conjunction of the following two principles:\footnote
{~We implicitly assume that $A,B\subseteq\N^k$
and $A',B'\subseteq\N^{k'}$ are \emph{homogeneous} pairs
(\ie~sets of the same dimension);
otherwise $A\cup B, A'\cup B', A\setminus B, A'\setminus B'$
are not point sets.}
\begin{enumerate}
\item
\emph{Sum Principle:}\
Let $A,A',B,B'$ be such that $A\cap B=\emptyset$
and $A'\cap B'=\emptyset$.
\\
If $A\approx A'$ and $B\approx B'$, then
$A\cup B\approx A'\cup B'$.
\item
\emph{Difference Principle:}\
Let $A,A',C,C'$ be such that
$A\subseteq C$ and $A'\subseteq C'$.
\\
If $A\approx A'$ and $C\approx C'$, then
$C\setminus A\approx C'\setminus A'$.
\end{enumerate}
\end{proposition}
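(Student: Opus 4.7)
The plan is to prove the two implications separately. The direction (Sum $+$ Difference) $\Rightarrow$ \euno\ is direct: write $A = (A \cap B) \cup (A \setminus B)$ and $B = (A \cap B) \cup (B \setminus A)$ as disjoint unions sharing the common part $A \cap B$. If $A \approx B$, apply the Difference Principle with $C = A$, $C' = B$, and the trivially equinumerous subsets $A \cap B \approx A \cap B$, to obtain $A \setminus B \approx B \setminus A$. Conversely, if $A \setminus B \approx B \setminus A$, apply the Sum Principle to $A \cap B \approx A \cap B$ and $A \setminus B \approx B \setminus A$ (both pairs disjoint) to recover $A \approx B$.

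For the harder direction \euno\ $\Rightarrow$ (Sum $+$ Difference), the central tool is a \emph{cancellation lemma}: whenever $W$ is disjoint from both $X$ and $Y$, one has $X \approx Y$ if and only if $X \cup W \approx Y \cup W$. This is immediate from \euno, since the symmetric-difference parts $(X \cup W) \setminus (Y \cup W) = X \setminus Y$ and $(Y \cup W) \setminus (X \cup W) = Y \setminus X$ are unchanged. Thus disjoint common parts can be added to or removed from both sides at will. To prove Sum, decompose the universe into eight pairwise disjoint atoms:
\[
P = A \cap A',\ \ Q = B \cap B',\ \ R = A \cap B',\ \ S = B \cap A',
\]
\[
U = A \setminus (A' \cup B'),\ V = B \setminus (A' \cup B'),\ U' = A' \setminus (A \cup B),\ V' = B' \setminus (A \cup B);
\]
pairwise disjointness is ensured by the hypotheses $A \cap B = \emptyset = A' \cap B'$. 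Applying \euno\ to $A \approx A'$ yields $R \cup U \approx S \cup U'$ (since $A \setminus A' = R \cup U$ and $A' \setminus A = S \cup U'$), and likewise $B \approx B'$ yields $S \cup V \approx R \cup V'$. Now add the disjoint $V$ to both sides of the first, and $U'$ to both sides of the second, via the cancellation lemma; then chain by transitivity to obtain $R \cup U \cup V \approx R \cup U' \cup V'$. Cancelling the disjoint $R$ gives $U \cup V \approx U' \cup V'$, and finally adding back $P \cup Q \cup R \cup S$ (disjoint from all of $U, V, U', V'$) recovers $A \cup B \approx A' \cup B'$.

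The Difference case proceeds identically in spirit: setting $X = C \setminus A$ and $X' = C' \setminus A'$, decompose the universe into the (at most) nine atoms determined by memberships in $A, A', X, X'$, subject to the constraints $A \cap X = A' \cap X' = \emptyset$. Express the two hypotheses via \euno\ as equinumerosities between disjoint unions of atoms, then chain them by ``add a disjoint atom to both sides, apply transitivity, cancel a common atom'' to conclude $X \setminus X' \approx X' \setminus X$; \euno\ then gives $X \approx X'$, as required. The main obstacle is the bookkeeping needed to set up the atomic decomposition correctly; once one views \euno\ as a cancellation principle for disjoint common parts, the rest is just commutative-semigroup arithmetic.
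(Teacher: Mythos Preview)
Your proof is correct and follows essentially the same strategy as the paper's: both hinge on the observation that \textsf{(E1)} is exactly the cancellation lemma ``$X\approx Y \iff X\cup W\approx Y\cup W$ for $W$ disjoint from $X,Y$,'' and then push two given equinumerosities through an atomic decomposition by repeated add/cancel/transitivity. The only organizational difference is that the paper first packages Sum and Difference together into a single biconditional
\[
\textsf{(E1}^*\textsf{)}:\quad A\subseteq C,\ A'\subseteq C',\ A\approx A'\ \Longrightarrow\ \bigl(C\setminus A\approx C'\setminus A'\ \Leftrightarrow\ C\approx C'\bigr),
\]
and proves \textsf{(E1)}$\,\Rightarrow\,$\textsf{(E1}$^*$\textsf{)} with a single eight-atom decomposition of $C\cup C'$; your separate treatment of Sum and Difference amounts to the two directions of this biconditional, and your Difference decomposition (once written out) matches the paper's exactly.
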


\begin{proof}
    We begin by proving that $\euno$ follows from the conjunction of 
    $(1)$ and $(2)$. In fact, if $A\/ B \eq B\/ A$, then $$A=(A\/ B) 
    \cup (A\cap B) \eq  (B\/ A) 
    \cup (A\cap B) =B,$$ by $(1)$. Conversely, if $A\eq B$, then 
    $$A\/ B= A\/ (A\cap B) \eq B\/ (A\cap B) =B\/ A,$$ by $(2)$.

    Now remark that, if we put $A\cup B=C$ and $A'\cup B'=C'$ in (1), 
   then both principles follow at once from the statement
    \begin{itemize}
        \item[$\euno^{*}$]\emph{ Let $A,A',C,C'$ be such that 
$A\subseteq C$ and $A'\subseteq C'$.  If $A\approx A'$, 
then}
$$C\setminus A\approx C'\setminus A'\ \ \Iff\ \ C\approx C'.$$
    \end{itemize}
    
    We are left to show that $\euno$ implies $\euno^{*}$.

\smallskip
Assume that the equivalence relation
$\approx$ satisfies $\euno$, and put

\smallskip\noindent
$A_{0}=A\/ C'$,\ \  $A'_{0}=A'\/ C$,\ \  $D=A\cap A'$,\ \ 
$A_{1}=A\/ (A_{0}\cup D)$,\ \  $A'_{1}=A'\/ (A'_{0}\cup D)$,

\smallskip\noindent
$C_{0}=C\/ (A\cup C')$, $C'_{0}=C'\/ (A'\cup C)$,
$E=(C\cap C')\/(A\cup A')$,

\smallskip\noindent
so as to obtain pairwise disjoint sets $A_{0},A_{1},A'_{0},A'_{1},D, 
C_{0},C'_{0},E$ such that

\smallskip\noindent
$A=A_{0}\cup A_{1}\cup D$,\ \
$A'=A'_{0}\cup A'_{1}\cup D$,\ \

\smallskip\noindent
$C\/ C'=A_{0}\cup C_{0}$,\ \ 
$C'\/ C=A'_{0}\cup C'_{0}$,\ \ 

\smallskip\noindent
$C\/ A=C_{0}\cup E\cup A'_{1}$,\ \ 
$C'\/ A'=C'_{0}\cup E\cup A_{1}$.

\smallskip
Hence
$$C\eq C'\ \Iff\ \ C_{0}\cup A_{0}\eq C'_{0}\cup A'_{0} $$
and 
$$C\/ A  \eq C'\/ A' \ \ \Iff\ \ C_{0}\cup A'_{1}\eq C'_{0}\cup 
A_{1}$$

Since $A\eq A'$ we have\ \  $A_{0}\cup A_{1}\eq A'_{0}\cup A'_{1}$,\ 
whence $$C_{0}\cup A_{0}\cup A_{1} \eq C_{0}\cup A'_{0}\cup A'_{1}.$$

\smallskip\noindent
So $C\/ A  \eq C'\/ A'$ implies $C_{0}\cup A_{0}\cup A_{1}\eq 
C'_{0}\cup A_{1}\cup A'_{0}
$, whence $C\eq C'$.

\smallskip\noindent
Conversely, $C\eq C'$ implies $C_{0}\cup A_{0}\cup A_{1}
\eq C'_{0}\cup A'_{0}\cup A_{1}$, whence $C\/ A  \eq C'\/ A'$.

\qed

\end{proof}

\medskip
Thus equinumerosity behaves coherently with respect to
the operations of \emph{disjoint union} and \emph{set difference}.
We can now prove that our notion of equinumerosity satisfies the 
basic requirement
that \emph{finite point sets are equinumerous if and only if they 
have the same 
``number of elements''}. 
\begin{proposition}\label{fin}
	Assume \emph{\euno}\ and \emph{\edue}, and let $A,B$ be finite sets. Then
	$$A\approx B\ \Iff\ |A|=|B|.$$
\end{proposition}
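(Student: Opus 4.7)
The plan is to prove both directions by induction on the common cardinality $n$, after first establishing the crucial preliminary fact that no nonempty set is equinumerous to $\emptyset$. For the inductive step we will invoke the Sum Principle, which is available through Proposition~\ref{sumdif} since $\approx$ is assumed to be an equivalence relation satisfying \euno.

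\emph{Preliminary fact.} Given any nonempty point set $C$, I apply \edue\ to the pair $(\emptyset, C)$: clause (b) is vacuous because $\emptyset$ has no proper subset, while clause (c) is witnessed by the proper subset $\emptyset \subset C$ together with the reflexive instance $\emptyset \approx \emptyset$. Exclusivity then forces clause (a) to fail, so $\emptyset \not\approx C$. Applying this in turn to \edue\ for $(\{a\}, \{b\})$ with $a \neq b$ rules out both clauses (b) and (c), which would each require a singleton to be equinumerous to $\emptyset$. Thus clause (a) must hold, i.e., any two singletons are equinumerous.

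\emph{Forward direction} $|A| = |B| \Rightarrow A \approx B$: induct on $n = |A|$. The cases $n = 0$ (reflexivity) and $n = 1$ are handled above. For $n+1$, pick $a \in A$ and $b \in B$; the inductive hypothesis gives $A \setminus \{a\} \approx B \setminus \{b\}$, and the singleton case gives $\{a\} \approx \{b\}$. Decomposing $A$ and $B$ as the disjoint unions $(A \setminus \{a\}) \cup \{a\}$ and $(B \setminus \{b\}) \cup \{b\}$ and applying the Sum Principle from Proposition~\ref{sumdif} yields $A \approx B$.

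\emph{Converse} $A \approx B \Rightarrow |A| = |B|$: suppose for contradiction that $|A| \neq |B|$, say $|A| < |B|$. Choose any proper $B' \subset B$ with $|B'| = |A|$; the forward direction gives $A \approx B'$, so clause (c) of \edue\ for the pair $(A, B)$ holds, contradicting clause (a). Hence $|A| = |B|$. The main obstacle is the singleton base case: without recourse to \etre, the equinumerosity of distinct singletons (possibly even across different dimensions) must be extracted from trichotomy alone, which is accomplished by the two-step application of \edue\ above.
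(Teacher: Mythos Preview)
Your proof is correct and follows essentially the same approach as the paper's: derive the equinumerosity of all singletons from trichotomy \edue, then induct on cardinality using the consequences of \euno\ established in Proposition~\ref{sumdif}. The only difference is organizational: the paper invokes the biconditional \euno$^{*}$ (proved inside Proposition~\ref{sumdif}) to obtain $A\setminus\{a\}\approx B\setminus\{b\}\iff A\approx B$ and thereby handle both implications simultaneously, whereas you separate them, using the Sum Principle for one direction and trichotomy again for the converse.
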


{\bf Proof.}
~We begin by proving that \edue\ implies that any two singletons are equinumerous.
In fact the only proper subset of any singleton is the empty set, and
 no nonempty set can be equinumerous to $\0$, by \edue.

Given nonempty finite sets $A,B$,
pick $a\in A$, $b\in B$ and
put $A'=A\/\{a\}$, $B'=B\/\{b\}$.  Then, by \euno$^{*}$,
 $A'\approx B'\Iff A\approx B$, because $\{a\}\approx\{b\}$, and
the thesis follows by  induction on $n=|A|$.

\qed

\medskip
By the above proposition, we can identify
each natural number $n\in\N$ with the
equivalence class of all those point sets
that have finite cardinality $n$.

Remark that trichotomy is not essential in order to 
obtain Proposition \ref{fin}. In fact, let $\eq$ be nontrivial 
and satisfy \euno,  \etre, 
and \ecinque. Then, by \etre, we have 
$$\{x\}\eq\{y\}\*\{x\}\eq\{x\}\*\{y\}
\eq \{y\}$$
for all $x\in \N^{h}$ and all $y\in\N^{k}$, and so all singletons are 
equinumerous. 
On the other hand, if any singleton is equinumerous to $\0$, then all 
sets are, because of \ecinque.

\smallskip
Clearly \etre\ formalizes the natural
idea that singletons have ``unitary'' numerosity.
A trivial but important consequence of this 
axiom, already mentioned above, is the existence of infinitely many 
pairwise disjoint equinumerous copies of any given point set in every 
higher dimension. Namely

\begin{proposition}\label{lifdim}
 Assume \emph{\etre}, and let $A\subseteq\N^k$ be a $k$-dimensional point set. 
 If $h>k$ let 
 $P\in\N^{h-k}$ be any $(h-k)$-dimensional point. Then
$\{P\}\* A\incl \N^{h}$ is equi\-numerous to $A$.
\qed
\end{proposition}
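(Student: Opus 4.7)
The plan is to reduce the claim to a single application (or an iteration) of axiom \etre. Write $P=(p_{1},\ldots,p_{h-k})$. Using the paper's convention of identifying Cartesian products with concatenations, we have
\[
\{P\}=\{p_{1}\}\times\{p_{2}\}\times\cdots\times\{p_{h-k}\},
\]
so that
\[
\{P\}\times A\;=\;\{p_{1}\}\times\{p_{2}\}\times\cdots\times\{p_{h-k}\}\times A.
\]
If axiom \etre\ is read as applying to a point $P$ of any dimension, the conclusion is immediate: $\{P\}\times A\eq A$ is literally one of the two equinumerosities asserted by \etre. Otherwise one argues by induction on $h-k$.

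For the inductive version, the base case $h-k=1$ is exactly \etre. For the inductive step, apply \etre\ to the one-dimensional point $p_{1}$ and the set $\{p_{2}\}\times\cdots\times\{p_{h-k}\}\times A\subseteq\N^{h-1}$, obtaining
\[
\{p_{1}\}\times\{p_{2}\}\times\cdots\times\{p_{h-k}\}\times A\;\eq\;\{p_{2}\}\times\cdots\times\{p_{h-k}\}\times A.
\]
The inductive hypothesis (applied to the $(h-k-1)$-dimensional point $(p_{2},\ldots,p_{h-k})$) gives that the right-hand side is equinumerous to $A$, and transitivity of $\eq$ closes the argument.

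There is no real obstacle here: the only thing to check is that the implicit associativity of Cartesian product used in the concatenation convention is invoked consistently, so that at each step of the induction the product $\{p_{i}\}\times(\text{rest})$ really does fit the pattern of \etre. This is immediate from the definitions given at the start of Section~\ref{equi}, so the proof is essentially a one-line reduction to \etre.
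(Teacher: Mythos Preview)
Your argument is correct and matches the paper's intent: the paper states this proposition with a bare \qed\ and no proof, calling it a ``trivial but important consequence'' of \etre, so your direct (or inductive) reduction to \etre\ is exactly what is meant.
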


\section{The algebra of numerosities}\label{arit}

\smallskip
Starting from the equivalence relation of \emph{equipotency},
Cantor introduced the algebra of cardinals by means
of disjoint unions and Cartesian products. Our axioms have been 
chosen so as to allow for the introduction of
 an ``algebra of numerosities'' in the same 
Cantorian manner.

\begin{definition}\label{num}
\emph{Let $\WW=\bigcup_{k\in\N^{+}} \P(\N^{k})$ be the family of all 
point sets over $\N$, and let $\eq$ be an equinumerosity relation on $\WW$.}

\noindent
    $\bullet$ \emph{The \emph{numerosity} of $A\in\WW$ (with respect to $\eq$)
    is the equivalence class of all point sets
equinumerous to $A$}
$$ \nk_{\eq}(A)= [A]_{\eq}=\{B\in\WW \mid B\approx A\}.$$

\noindent
    $\bullet$   \emph{The  \emph{set of numerosities} of 
$\ \eq\ $ is the quotient set $\ \Ng_{\eq}=\WW/\eq$.
}

\noindent
    $\bullet$  \emph{The \emph{numerosity function} 
associated to $\ \approx\ $ is
the canonical map $\ \ng_{\eq}:\WW\to\Nk_{\eq}$.}

\end{definition}

{In the sequel we shall drop the 
subscript $\eq$ whenever the equinumerosity relation is fixed.}
Numerosities will be usually denoted by \emph{Frakturen}
$\xk,\yk,\zk,$ \emph{etc}.

The given axioms guarantee that numerosities are naturally equipped
with a ``nice'' algebraic structure.
(This has to be contrasted with the awkward cardinal algebra,
where \pes\ \
$\kappa+\mu=\kappa\cdot\mu=\max\{\kappa,\mu\}$ for all infinite
$\kappa,\mu$.)

\begin{Theorem}\label{ring}
    Let $\Nk$ be the set of numerosities of the equinumerosity 
    relation $\eq$. Then
there exist unique operations $\, +\, $ and $\, \cdot\, $,
and a unique linear order $\, <\, $ on $\,\Ng$, such that for
all point sets $A,B$:
\begin{enumerate}
\item
$\ng(A)+\ng(B)=\ng(A\cup B)$ whenever $A\cap B=\emptyset$\,;\footnote
{~Again, here we implicitly assume that $A,B\subseteq\N^k$
are homogeneous, as otherwise their union $A\cup B$ would not be a point set.}
\item
$\ng(A)\cdot\ng(B)=\ng(A\times B)$\,;
\item
$\ng(A)<\ng(B)$ if and only if $A\approx B'$
for some proper subset $B'\subset B$.
\end{enumerate}

The resulting structure on $\Ng$
is the non-negative part of a discretely ordered ring
$(\mathfrak{R},\, 0,\, 1,\, +,\, \cdot,\, <)$.
Moreover, if the fundamental subring of $\Rg$ is identified with 
$\Z$, then
$\ng(A)=|A|$ for every finite point set $A$.

\end{Theorem}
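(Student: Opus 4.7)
The plan is to equip $\mathfrak{N}$ with the structure of a cancellative, discretely ordered semiring directly from the axioms, and then pass to its Grothendieck ring $\mathfrak{R}$.

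\textbf{Well-definedness of the operations.} Given $\mathfrak{x}=\mathfrak{n}(A)$ and $\mathfrak{y}=\mathfrak{n}(B)$, I will invoke Proposition \ref{lifdim} to replace $A$ and $B$ by equinumerous ``lifted'' copies in a common higher-dimensional space that are disjoint (by appending distinct prefix coordinates), then set $\mathfrak{x}+\mathfrak{y}:=\mathfrak{n}(A\sqcup B)$; the Sum Principle of Proposition \ref{sumdif} will ensure independence of the chosen disjoint representatives. Multiplication $\mathfrak{x}\cdot\mathfrak{y}:=\mathfrak{n}(A\times B)$ is well-defined by \ecinque\ directly. The relation in clause~(3) is irreflexive and antisymmetric by the trichotomy \edue, and transitivity will follow from trichotomy by contradiction: assuming $\mathfrak{n}(A)<\mathfrak{n}(B)<\mathfrak{n}(C)$, neither $A\approx C$ nor $C\approx A'\subsetneq A$ is compatible with the chain, so clause~(b) of \edue\ must apply to the pair $(A,C)$, yielding $\mathfrak{n}(A)<\mathfrak{n}(C)$.

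\textbf{Semiring axioms.} Commutativity and associativity of $+$ will reduce to the corresponding identities for disjoint union together with the Sum Principle; associativity of $\cdot$ follows from the tacit identification of iterated Cartesian products with concatenations; left and right distributivity follow from the set-theoretic identities $(A\sqcup B)\times C=(A\times C)\sqcup(B\times C)$ and its dual. The additive identity is $0:=\mathfrak{n}(\emptyset)$. For the multiplicative identity, axiom \etre\ gives $\{x\}\approx\{x\}\times\{y\}\approx\{y\}$, so all singletons share a common numerosity $1:=\mathfrak{n}(\{P\})$, and \etre\ directly yields $1\cdot\mathfrak{x}=\mathfrak{x}=\mathfrak{x}\cdot 1$. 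Order compatibility with $+$ will be shown by choosing a disjoint representative $\tilde C$ of $\mathfrak{n}(C)$: from $A\approx B'\subsetneq B$ the Sum Principle gives $A\sqcup\tilde C\approx B'\sqcup\tilde C\subsetneq B\sqcup\tilde C$. Compatibility with $\cdot$ follows from $A\times C\approx B'\times C\subsetneq B\times C$ (for $C\neq\emptyset$) via \ecinque.

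\textbf{Cancellation, Grothendieck passage, and discreteness.} For cancellation, if $\mathfrak{n}(A)+\mathfrak{n}(C)=\mathfrak{n}(B)+\mathfrak{n}(C)$ I will pick one single copy $\tilde C$ of $C$ disjoint from both $A$ and $B$ via Proposition \ref{lifdim}; then $A\sqcup\tilde C\approx B\sqcup\tilde C$, and the Difference Principle of Proposition \ref{sumdif}, applied with common subset $\tilde C$, yields $A\approx B$. So $\mathfrak{N}$ is a cancellative ordered semiring, and its standard Grothendieck group completion is a ring $\mathfrak{R}$ whose non-negative part is $\mathfrak{N}$, with order extended by $\mathfrak{x}-\mathfrak{y}<\mathfrak{x}'-\mathfrak{y}'$ iff $\mathfrak{x}+\mathfrak{y}'<\mathfrak{x}'+\mathfrak{y}$. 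Discreteness: if $0<\mathfrak{n}(A)<1$, clause~(3) would give $A\approx B'$ for some $B'\subsetneq\{P\}$, forcing $B'=\emptyset$ and hence $\mathfrak{n}(A)=0$ by Proposition \ref{fin}, a contradiction. Proposition \ref{fin} identifies every $n$-element finite set with the same class, embedding $\N$ (and hence the fundamental subring $\Z$) into $\mathfrak{R}$ so that $\mathfrak{n}(A)=|A|$ for finite $A$. Uniqueness of $+,\cdot,<$ is immediate since (1)--(3) determine them on representatives and every numerosity is of the form $\mathfrak{n}(A)$. The main obstacle is the simultaneous handling of cancellation and the systematic replacement of representatives by disjoint copies; once those bookkeeping issues are handled via Propositions \ref{sumdif} and \ref{lifdim}, the Grothendieck passage itself is routine.
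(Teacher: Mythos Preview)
Your overall strategy matches the paper's almost exactly: build $+$ via disjoint lifted copies (Proposition~\ref{lifdim}) and the Sum/Difference Principles (Proposition~\ref{sumdif}), build $\cdot$ via \ecinque, define $<$ via clause~(3), verify the semiring/order axioms, then pass to the Grothendieck ring and invoke Proposition~\ref{fin} for discreteness and the identification with $\Z$.

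There is, however, a genuine gap in your transitivity argument. You assert that, given $\mathfrak{n}(A)<\mathfrak{n}(B)<\mathfrak{n}(C)$, the alternatives $A\approx C$ and $C\approx A'\subsetneq A$ are ``incompatible with the chain,'' but you do not say why. The first alternative is indeed easy to exclude: from $A\approx C$ and $A\approx B'\subsetneq B$ transitivity of $\approx$ gives $C\approx B'\subsetneq B$, which together with $B\approx C'\subsetneq C$ violates trichotomy for the pair $(B,C)$. The second alternative, $C\approx A'\subsetneq A$, is where the real work hides: to derive a contradiction you must transport the proper subset $C'\subsetneq C$ (with $B\approx C'$) across the bijection-in-numerosity $C\approx A'$ to get some $A''\subsetneq A'\subsetneq A$ with $B\approx A''$, and only then does trichotomy for $(A,B)$ fail. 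This transport step is exactly the lemma the paper isolates and proves as $(\star)$: \emph{if $A\approx B$, then every $X\subsetneq A$ is equinumerous to some $Y\subsetneq B$}. It is short (a one-line application of trichotomy to $(X,B)$), but it is not automatic, and your sketch skips it. Once you insert $(\star)$, your proof coincides with the paper's.
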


\begin{proof}
We begin with the ordering.
Trivially $A\approx A$, and so the trichotomy property \edue\ implies
the irreflexivity $\nk(A)\not<\nk(A)$. In order to prove transitivity,
we show first the following property:

\begin{center}
$(\star)$\quad
If  $A\approx B$, then for any $X\subset A$ there
exists $Y\subset B$ such that $X\approx Y$.
\end{center}

Since $A\approx B$, the proper subset $X\subset A$ cannot be
equinumerous to $B$. Similarly, $B$ cannot be
equinumerous to a proper subset $X'\subset X\subset A$.
We conclude that $X\approx Y$ for some $Y\subset B$,
and $(\star)$ is proved.

Now assume $\nk(A)<\nk(B)<\nk(C)$. Pick proper subsets $A'\subset B$ and
$B'\subset C$ such that $A\approx A'$ and $B\approx B'$.
By $(\star)$, there exists $A''\subset B'$ such that
$A'\approx A''$. So, $\nk(A)<\nk(C)$ holds because $A\approx A''\subset C$,
and transitivity follows. Finally, again by trichotomy,
we get that the order $<$ is linear.

\smallskip
We now define a sum for numerosities.
Given $\xk,\yk\in\Ng$, there exist
homogeneous disjoint point sets $A, B\subseteq\N^k$
such that $\ng(A)=\xk$
and $\ng(B)=\yk$, by Proposition \ref{lifdim}. Then put $\xk+\yk=\ng(A\cup B)$.
This addition is independent
of the choice of $A$ and $B$, by Proposition \ref{sumdif}. 
Commutativity and associativity
 trivially follow from the
corresponding properties of disjoint unions.

Clearly $0=\ng(\emptyset)$ is the (unique) neutral element. Moreover
Proposition \ref{sumdif} directly yields the 
cancellation law
$$\ \xk+\zk = \xk+\zk'\ \Iff\ \zk=\zk'.$$

By definition,
$\xk\le \yk$ holds if and only if
there exists $\zk$ such that $\yk=\xk +\zk$. Such a $\zk$ is
unique by the cancellation law, and so the monotonicity property 
with respect to addition 
   follows from  the equivalences
$$\wk+\xk \le \wk+\yk\ 
\Iff\ \exists \zk\, (\wk+\xk+\zk=\wk+\yk)\
\Iff\ \exists \zk\, (\xk+\zk=\yk)\ \Iff\ \xk\le
\yk.$$
The multiplication of numerosities given by condition (2) is well-defined 
by axiom \ecinque. 
Associativity follows from the corresponding
property of concatenation (and this is the reason for our
convention on Cartesian products).
   Distributivity
is inherited from the corresponding property of Cartesian products
with respect to disjoint unions.
 Moreover $1=\ng(\{P\})$ is an identity, by
\etre.
Finally, by definition, $\xk\cdot \yk = 0$ if and only if
$\xk=0$ or $\yk=0$.

\smallskip
Therefore $\langle\Ng,+,\cdot\,,<\rangle$
 is \emph{the non-negative part of a linearly ordered
ring} $\Rg$, say.\footnote{~$\Rg$ can be defined as usual by mimicking
the construction of $\mathbb{Z}$ from $\N$.
Take the quotient of $\Ng\times\Ng$ modulo the equivalence
$(x,y)\equiv(x',y')\Leftrightarrow x+y'=x'+y$, and define the
operations in the obvious way.}
By Proposition \ref{fin}, $\N$ can be identified with the set of the 
numerosities of finite 
sets; hence it is an initial segment of $\Ng$. 
It follows that the
ordering of $\Rg$ is discrete.
\end{proof}
\qed

\medskip

Recall that we have not postulated an axiom of commutativity, and so
 in general the set of numerosities might be a
\emph{noncommutative} semiring. However 
numerosities can be given a nice algebraic characterization as 
\emph{the non-negative part of the 
quotient of a ring of noncommutative formal power series with 
integer coefficients modulo a suitable prime ideal}. 
Let us fix our notation as follows:

\begin{itemize}

\item let $\TT=\la t_{n}\mid\, n\in\N\ra$ be a sequence of 
\emph{noncommutative} indeterminates and let $\Z\la\!\la \TT\ra\!\ra$ be 
the corresponding ring of \emph{noncommutative formal power series}; 

\smallskip
\item   to each  point
$x=(x_{1},\ldots,x_{k})\in\N^{k}$ associate
the \emph{noncommutative monomial}
$\ t_{x}=t_{x_{1}}\ldots t_{x_{k}};$

\item let $S_{X}$ be the \emph{characteristic series}
of the point set $X\in \WW$, \ie 
$$S_{X}=\sum_{x\in X}t_{x}\in\SZ;$$

\item let $\RR\incl\SZ$ be  the ring of
 all formal series of \emph{bounded degree} in $\TT$
with \emph{bounded integral coefficients}; 
\item let $\RP$ be the
multiplicative subset of the \emph{bounded positive  series}, \ie\ 
the series  in $\RR$ having only positive 
coefficients;

\item let $\Ik_{0}$ be the two-sided ideal of $\RR$
generated by $\{t_{n}-1\,\mid\, n\in\N\}$.

\end{itemize}

\bigskip
Then

\begin{proposition}\label{chars} ${}$
    
    \begin{enumerate}
        \item Characteristic series of nonempty point sets belong to 
    $\RP$; 
    
        \item characteristic series behave well with respect to
     \emph{unions} and  \emph{products}, \ie :
    $$\ \ S_{X}+S_{Y}=S_{X\cup Y}+S_{X\cap Y}\ \ \mbox{and}\ \
    \  S_{X\times Y}=S_{X}\cdot S_{Y}\ \mbox{for all}\ 
     X,Y\in\WW.\footnote{~\emph{Caveat}: if $X,Y$ have different 
     dimensions their union does not belong to $\WW$, so the first 
     equality does not apply.
     }$$    
     
\item Let $S=a+\sum a_{x}t_{x}\in\RR$, with $|a_{x}|\le 
B$ and $\deg t_{x}\le d$ for all $x$; then 
$$S=a+\sum_{k=1}^{d}\sum_{i=1}^{B} (S_{X_{ik}}-S_{Y_{ik}}),$$ 
where\ \ $ 
X_{ik}=\{x\in\N^{k} \mid a_{x} \ge i\},\ 
Y_{ik}=\{x\in\N^{k} \mid a_{x} \le -i\}.$

\smallskip
\item For any $S\in\Ik_{0}$ there exist  finite sequences $\la 
a_{h}\mid h\le n\ra$ of integers and $\la X_{i}\mid 1\le i\le m\ra$ of
point sets,
such that
\begin{eqnarray*}
     & S & =\ a_{0}(t_{0}-1) + \sum_{i,j=1}^{m} \eg_{0ij}
(S_{X_{i}\* \{0\}\* X_{j}}-S_{X_{i}\* X_{j}}) +
    \label{}  \\
     &  & +\ \sum_{i=1}^{m}\dg_{0i}(S_{X_{i}\* \{0\}}-S_{X_{i}}) + 
     \eta_{0i}(S_{ \{0\}\* X_{i}}-S_{X_{i}})+
    \label{}  \\
     &  & +\ \sum_{h=1}^{n} \left(a_{h}(t_{h}-t_{0}) +
\sum_{i,j=1}^{m}\eg_{hij}(S_{X_{i}\* \{h\}\* X_{j}}-S_{X_{i}\* 
\{0\}\* X_{j}}) +\right.
    \label{}  \\
     &  & +\ \left.\sum_{i=1}^{m}\dg_{hi}(S_{X_{i}\* \{h\}}-S_{X_{i}\* 
\{0\}})+ \eta_{hi}(S_{ \{h\}\* X_{i}}-S_{ 
\{0\}\* X_{i}})\right)
    \label{}
\end{eqnarray*}

with suitable coefficients\ $\eg_{hij},\dg_{hi},\eta_{hi}\in\{0,\pm1\}$.

\medskip
        \item Every positive series $P\in\RP$ is equivalent modulo $\Ik_{0}$ 
to the characteristic series $S_{X}$ of some nonempty point set $X$.

\smallskip

\item $\RR$ is the subring with 
identity of $\,\SZ$ generated by $\Ik_{0}$ together with
   the set of the characteristic series of all point sets. More 
   precisely 
   $$\RR/\Ik_{0}=\{S_{X}-S_{Y}+\Ik_{0}\,\mid X,Y\in\WW\}$$
    \end{enumerate}  
   
    \end{proposition}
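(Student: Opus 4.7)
The plan is to dispose of items (1)--(3) by direct computation, then to build up (4)--(6) using (3) as the main tool.

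For (1), each $S_X=\sum_{x\in X}t_x$ has coefficients in $\{0,1\}$ and uniform degree equal to the dimension of $X$, so $S_X\in\RR$, and indeed $S_X\in\RP$ whenever $X\neq\0$. For (2), the union identity reflects the pointwise identity $\chi_X+\chi_Y=\chi_{X\cup Y}+\chi_{X\cap Y}$ of characteristic functions, while
\[
S_X\cdot S_Y=\sum_{x\in X,\,y\in Y}t_xt_y=\sum_{(x,y)\in X\times Y}t_{(x,y)}=S_{X\times Y}
\]
uses our convention identifying Cartesian products with concatenations. For (3), group the terms of $S$ by degree $k\in\{1,\ldots,d\}$, and for each such $k$ use the elementary identity
\[
a_x=\sum_{i=1}^{B}\bigl(\chi_{[a_x\ge i]}-\chi_{[a_x\le -i]}\bigr),\qquad |a_x|\le B,
\]
to rewrite $\sum_{x\in\N^k}a_xt_x$ as $\sum_{i=1}^{B}(S_{X_{ik}}-S_{Y_{ik}})$ with $X_{ik},Y_{ik}$ exactly as in the statement.

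For (4), let $\sigma\in\Ik_0$. By definition of the two-sided ideal, $\sigma=\sum_\ell r_\ell(t_{n_\ell}-1)s_\ell$ for some finite family of $r_\ell,s_\ell\in\RR$ and indices $n_\ell\in\N$. Using (3), write each $r_\ell,s_\ell$ as an integer constant plus a signed sum of characteristic series, and expand every product. Each resulting summand has one of the four shapes
\[
\pm(t_{n_\ell}-1),\ \pm(S_{X_i\times\{n_\ell\}}-S_{X_i}),\ \pm(S_{\{n_\ell\}\times X_j}-S_{X_j}),\ \pm(S_{X_i\times\{n_\ell\}\times X_j}-S_{X_i\times X_j}),
\]
where (2) has been applied to collapse products of characteristic series into concatenations. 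When $n_\ell>0$, use the telescoping identity $U-V=(U-W)+(W-V)$ with the intermediate expression $W$ obtained by replacing $\{n_\ell\}$ by $\{0\}$; this splits each such summand into a ``base'' piece involving $t_0-1$ and a ``correction'' piece involving $t_{n_\ell}-t_0$, matching precisely the two families in the stated formula. To compress the integer multiplicities produced by (3) into coefficients $\eg_{hij},\dg_{hi},\eta_{hi}\in\{0,\pm1\}$, enlarge the indexed list $\la X_i\mid 1\le i\le m\ra$ by listing the same set several times with multiplicity. The main obstacle here is the bookkeeping: one must track signs and indices carefully while enforcing this specific normal form.

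For (5), let $P\in\RP$ be nonzero. From (1)--(2) and the congruences $t_n\equiv 1$ and $t_n\equiv t_m\pmod{\Ik_0}$ (the latter from $(t_n-1)-(t_m-1)\in\Ik_0$) one may lift any monomial of degree $k<d$ to degree $d$ by right-multiplying with fresh indeterminates $t_{j_1}\cdots t_{j_{d-k}}$, and one may rename any individual index without altering the class modulo $\Ik_0$. Writing $P=c+\sum_{k\le d}\sum_x a_x^{(k)}t_x$, this lets us (i) turn the constant $c$ into $c$ distinct degree-$d$ monomials, (ii) lift every monomial of degree $<d$ to degree $d$, and (iii) replace each repeated occurrence coming from a multiplicity $a_x^{(k)}>1$ by distinct degree-$d$ monomials. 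Since $\N$ is infinite, at each stage we can pick fresh indices not used before, producing a single nonempty point set $X\subseteq\N^d$ whose characteristic series satisfies $S_X\equiv P\pmod{\Ik_0}$. Finally, for (6), apply (3) to an arbitrary $S\in\RR$ to express $S=a+P_1-P_2$ with $P_1,P_2$ positive sums of characteristic series, split $a=a_+-a_-$ with $a_\pm\ge 0$, so that $S=(a_++P_1)-(a_-+P_2)$ is a difference of two elements of $\RP\cup\{0\}$, and apply (5) to each nonzero summand (using $S_\0=0$ otherwise) to obtain point sets $X,Y\in\WW$ with $S\equiv S_X-S_Y\pmod{\Ik_0}$.
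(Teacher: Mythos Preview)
Your treatment of (1)--(4) and (6) is correct and matches the paper's approach; for (4) the paper simply takes $t_0-1$ together with the binomials $t_h-t_0$ (for $h>0$) as generators of $\Ik_0$ from the outset rather than expanding against $t_{n_\ell}-1$ and telescoping afterwards, which amounts to the same computation.

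There is, however, a genuine gap in your argument for (5): the claim that the resulting point set $X$ can always be taken in $\N^d$ is false, and your ``fresh indices'' reasoning breaks down precisely when it is needed. Take $P=2\sum_{n\in\N}t_n\in\RP$, so $d=1$ and $B=2$. Here every index in $\N$ already occurs in $P$, so no fresh indices exist; step~(ii) is vacuous, and step~(iii) cannot be carried out within degree~$1$, since renaming a copy of $t_n$ to some $t_{n'}$ just shifts the excess multiplicity onto $t_{n'}$, which already carries multiplicity~$2$. (In fact $P\equiv S_{\N\times\{0,1\}}\pmod{\Ik_0}$, which lives in dimension~$2$.) The remedy is exactly what the paper does: pass to a strictly larger target degree $k>d$. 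The paper factors each monomial as $t_x=t_{x'}t_{x''}$ with $t_{x'}$ the maximal initial segment in $t_0,t_1$, and for each fixed suffix $t_z=t_{x''}$ replaces the weighted prefix sum (of total weight $N_z$) by $N_z$ distinct monomials in $t_0,t_1$; choosing $k$ with $2^k>B\cdot 2^d$ guarantees enough degree-$(k-\deg t_z)$ monomials in two variables to accommodate all $N_z$ prefixes. Your approach becomes correct once you allow right-multiplication by enough extra variables to reach such a $k$ instead of stopping at~$d$.
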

\proof
$(1), (2)$ and $(3)$ directly follow from the definitions.

In order to prove $(4)$, choose the binomials $t_{n}-t_{0}$ together 
with 
$t_{0}-1$ as generators of $\Ik_{0}$. Recalling $(3)$, we can write each element 
of  $\Ik_{0}$ as the sum of finitely many terms of one of the following types:
\begin{itemize}
    \item[$(a)$]  $a(t_{0}-1)$ or $a(t_{n}-t_{0}),\, n>0$, with $a\in\Z$;

    \item[$(b)$]  $\pm S_{X}(t_{0}-1)$ or $\pm 
    S_{X}(t_{n}-t_{0}),
    \, n>0$ ;

    \item[$(c)$]  $\pm (t_{0}-1)S_{X}$ or $\pm 
    (t_{n}-t_{0})S_{X},
    \, n>0$;

    \item[$(d)$]  $\pm S_{X}(t_{0}-1)S_{Y}$ or $\pm 
    S_{X}(t_{n}-t_{0})S_{Y},
    \, n>0$.
\end{itemize}
In order to get $(4)$,  list all the point sets appearing above, each 
with the appropriate number of repetitions, and remark that
$S_{X}(t_{0}-1)S_{Y}=S_{X\* \{0\}\* Y}-S_{X\* Y}$, and similarly for 
the other types of summands.

$(6)$ follows 
immediately from $(5)$. As for $(5)$, let a positive series $P=\sum 
n_{x}t_{x}$ be given, with coefficients not exceeding $B$, and degrees 
not exceeding $d$.   
Factorize
each monomial as $t_{x}=t_{x'}t_{x''}$, where $t_{x'}\in\Sud$ is the largest 
initial part of $t_{x}$ containing only
the variables $t_{0},t_{1}$. Then 
$$P=\sum_{z}\left(\sum_{\substack{x\\ t_{x''}=t_{z}}}n_{x}t_{x'}\right)t_{z}.$$
Put $$N_{z}= \sum_{\substack{x\\ t_{x''}=t_{z}}}n_{x}.$$
Modulo $\Ik_{0}$ we may replace each internal sum 
$\sum n_{x}t_{x'}$
by an arbitrary sum 
of $N_{z}$ different monomials $t_{y}$ in the variables $t_{0},t_{1}$.
The resulting series is the characteristic series of a 
$k$-dimensional point set, 
provided that all the 
resulting monomials $t_{y}t_{z}$ have the same 
degree $k$.
Such a choice of the monomials $t_{y}$ is clearly possible by taking 
$k$ such that $2^{k}>B\cdot2^{d}$.

\qed

\medskip
 Once an equinumerosity relation has been fixed, the expression $(4)$ shows 
 that every element of $\Ik_{0}$ is the sum of differences of 
 characteristic series of equinumerous 
 point sets, plus a multiple of $t_{0}-1$.
 So numerosities can be viewed as elements of the quotient ring of 
$\RR$ modulo suitable ideals extending $\Ik_{0}$, namely ideals $\Ik$ such 
that the quotient $\RR/\Ik$  is a 
\emph{discretely ordered  ring} whose 
\emph{positive elements} are the cosets $P+\Ik$ for $P\in\RP$. 
To this aim we define:

\begin{definition}\label{gid}
    A two-sided ideal $\Ik$ of $\RR$ is a
   {\emph{gauge ideal}} if 
   \begin{itemize}
       \item  $\Ik_{0}\incl \Ik$,

       \item  $\RP\cap\Ik=\0$, and

       \item for all $S\in\RR\/\Ik$ there exists  
   $P\in\RP$ such that $\ S\pm P\in\Ik$. 
\end{itemize}  
\end{definition}

\noindent
Remark that any gauge ideal   is a \emph{prime} ideal of $\RR$ that is 
\emph{maximal} among the two-sided ideals disjoint from $\RP$.

We can now give a purely algebraic characterization of 
equinumerosities.

   \begin{thm}\label{ser}
       There exists a biunique correspondence between 
       equinumerosity relations on the
       space $\WW$ of all point sets over $\N$ and gauge ideals of 
       the ring $\RR$ of all formal series of bounded degree in $\SZ$ with bounded  coefficients. 
       In 
       this correspondence, if the equinumerosity $\eq$ corresponds to the 
       ideal  $\Ik$,  then
        \begin{equation*}
         X\eq Y\ \Iff\ \ S_{X}-S_{Y}\in\Ik. \tag{$*$}
            \label{(*)}
        \end{equation*}

       More precisely, let $\nk:\WW\to\Nk$ be the numerosity function associated 
       to $\eq$,
       and let $\pi:\RR\to\RR/\Ik$ be the canonical projection. Then
   there exists a unique isomorphism $\tau$ of ordered rings
   such that the following diagram commutes

   \medskip	   
   \bigskip
   
	   \begin{center}
	   \begin{picture}(180,60)
	      \put(0,0){\makebox(0,0){$\Nk$}}
	      \put(90,0){\makebox(0,0){$\Rk$}}
	      \put(90,33){\makebox(0,0){$(**)$}}
	      \put(180,0){\makebox(0,0){$\RR/\Ik$}}
	      \put(45,60){\makebox(0,0){$\WW$}}
	      \put(132,60){\makebox(0,0){$\RR$}}
	      \put(45,6){\makebox(0,0){$j$}}
	      \put(135,-6){\makebox(0,0){$\cong$}}
	      \put(135,6){\makebox(0,0){$\tau$}}
	      \put(12,36){\makebox(0,0){$\nk$}}
	      \put(161,36){\makebox(0,0){$\pi$}}
	      \put(90,66){\makebox(0,0){$\Sg$}}
	      \put(16,0){\vector(1,0){54}}
	      \put(106,0){\vector(1,0){60}}
	      \put(40,52){\vector(-1,-1){40}}
	      \put(135,52){\vector(1,-1){40}}
	      \put(57,60){\vector(1,0){62}}
	   \end{picture}
	   \end{center}

	   \bigskip

\noindent
$($where $\Sg$ maps any $X\in\WW$ to its characteristic
 series $S_{X}\in\RR,$ $\Rk$ is the ring generated by $\Nk$,
 and $j$ is the natural 
 embedding.$)$   
  
	     \end{thm}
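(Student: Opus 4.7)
The plan is to exhibit maps in both directions between equinumerosities and gauge ideals, verify they are mutually inverse, and construct the isomorphism $\tau$. Given an equinumerosity $\eq$, define $\Ik_\eq$ to be the two-sided ideal of $\RR$ generated by $\Ik_0$ together with $\{S_X - S_Y : X \eq Y\}$; conversely, given a gauge ideal $\Ik$, set $X \eq_\Ik Y$ iff $S_X - S_Y \in \Ik$, so that $(*)$ holds for $\eq_\Ik$ essentially by construction. To check $\Ik_\eq$ is a gauge ideal: the inclusion $\Ik_0 \subseteq \Ik_\eq$ is immediate; the disjointness $\RP \cap \Ik_\eq = \emptyset$ follows by rewriting any hypothetical element of $\RP \cap \Ik_\eq$ via Proposition \ref{chars}(2) and axioms \euno, \ecinque, \etre as an equinumerosity $\emptyset \eq C$ with $C$ nonempty, contradicting \edue; the completion condition is obtained by writing $S \in \RR$ as $a + \sum (S_{X_{ik}} - S_{Y_{ik}})$ via Proposition \ref{chars}(3) and applying trichotomy to each pair.

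For the reverse direction, $\eq_\Ik$ is plainly an equivalence relation, and: axiom \euno is automatic from the identity $S_X - S_Y = S_{X \setminus Y} - S_{Y \setminus X}$ in the homogeneous case; axiom \etre follows from $S_{A \* \{P\}} - S_A = S_A(t_P - 1)$ after verifying $t_P - 1 \in \Ik_0$ by induction on the length of $P$ using $t_{p_1}\cdots t_{p_k} - 1 = (t_{p_1}-1)\,t_{p_2}\cdots t_{p_k} + (t_{p_2}\cdots t_{p_k} - 1)$; axiom \ecinque follows from $S_{A \* B} - S_{A' \* B'} = S_A (S_B - S_{B'}) + (S_A - S_{A'}) S_{B'}$ together with the two-sided-ness of $\Ik$. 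The trichotomy \edue is the delicate step. Its exclusivity follows from $\RP \cap \Ik = \emptyset$: if $A \eq_\Ik B$ and $A' \eq_\Ik B$ for some $A' \subsetneq A$, then $S_{A \setminus A'} = S_A - S_{A'} \in \Ik \cap \RP$, a contradiction. For existence, the gauge condition makes $\RR/\Ik$ linearly ordered with positive elements exactly the cosets of $\RP$; hence $S_A - S_B + \Ik$ is either zero (giving case (a)), positive, or negative. In the positive case write $S_A - S_B \equiv P \pmod{\Ik}$ with $P \in \RP$ and apply Proposition \ref{chars}(5) to produce a nonempty $C$ with $S_C \equiv P \pmod{\Ik_0}$; exploiting the freedom in that construction (choice of dimension and of monomials $t_y$ in $t_0,t_1$) together with lifts of $A,B$ via \etre to a sufficiently high common dimension, arrange $C$ to sit inside $A$ while avoiding $B$, so that $A' = A \setminus C \subsetneq A$ satisfies $S_{A'} = S_A - S_C \equiv S_B \pmod{\Ik}$, yielding case (b). Case (c) is symmetric.

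The two constructions are mutually inverse: $X \eq Y \Leftrightarrow S_X - S_Y \in \Ik_\eq$ holds ($\Rightarrow$ by construction; $\Leftarrow$ by unpacking the generators of $\Ik_\eq$ and collapsing the expression via the axioms), and $\Ik_{\eq_\Ik} = \Ik$ since the two ideals coincide on generators and every element of $\RR/\Ik$ is a difference of characteristic-series cosets by Proposition \ref{chars}(6). Define $\tau: \Rk \to \RR/\Ik$ by $\tau(\nk(X) - \nk(Y)) = S_X - S_Y + \Ik$; this is well-defined and injective by $(*)$, surjective by Proposition \ref{chars}(6), and preserves sums, products and order using Proposition \ref{chars}(2) together with the characterization of positive cosets as $\RP + \Ik$. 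Commutativity of diagram $(**)$ then holds by the very definition of $\tau$ on generators. The main obstacle in the whole argument is the trichotomy for $\eq_\Ik$: producing the witnessing proper subset in case (b) requires delicate use of the flexibility in Proposition \ref{chars}(5) in concert with lifts via \etre so as to satisfy the combinatorial constraints $C \subseteq A$ and $C \cap B = \emptyset$.
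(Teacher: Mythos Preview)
Your overall architecture matches the paper's: define the ideal $\Ik_\eq$ from $\eq$ and the relation $\eq_\Ik$ from $\Ik$, check the two constructions land in the right class, verify they invert each other, and build $\tau$ from $(*)$. The verifications of \euno, \etre, \ecinque\ and the construction of $\tau$ are essentially the paper's. Two steps, however, are not adequately handled.

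\smallskip
\textbf{Disjointness $\RP\cap\Ik_\eq=\emptyset$.} Your one-line plan (``rewrite a hypothetical $P\in\RP\cap\Ik_\eq$ as an equinumerosity $\emptyset\eq C$'') is too vague, and the most natural reading is circular: to pass from $S_C\in\Ik_\eq$ to $C\eq\emptyset$ you would be using the $\Leftarrow$ direction of $(*)$, which the paper only derives \emph{after} disjointness is established. The paper's argument is concrete and different in kind: using Proposition~\ref{chars}(2)--(4) together with \etre, \ecinque, one writes any $S\in\Ik_\eq$ as $a(t_0-1)+\sum_{i=1}^m(S_{X_i}-S_{Y_i})$ with $X_i\eq Y_i$; if $S\in\RP$ then every point of $Y_1$ must occur in some $X_j$, so one partitions $Y_1=\bigcup_j Y_{1j}$ with $Y_{1j}\subseteq X_j$, replaces $X_j$ by $X_j\setminus Y_{1j}$, and iterates through $Y_2,\dots,Y_m$, tracking numerosities via Theorem~\ref{ring} to show that at the end all the $X$-sets are empty and $S=a(t_0-1)$ with $a=0$. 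This explicit subtraction procedure is the substance of the step; your sketch does not supply it.

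\smallskip
\textbf{Existence in \edue\ for $\eq_\Ik$.} You correctly flag this as the delicate point, but the proposed mechanism does not work. Lifting $A$ via \etre\ to $A\times\{P\}$ creates no additional room: every subset of $A\times\{P\}$ is of the form $A'\times\{P\}$ with $A'\subseteq A$, so the collection of available witnesses is unchanged. Consequently ``lifts of $A,B$ to a high common dimension'' cannot by themselves force the set $C$ produced by Proposition~\ref{chars}(5) to sit inside $A$. (Also, the extra requirement $C\cap B=\emptyset$ is unnecessary; $C\subseteq A$ nonempty already gives $S_{A\setminus C}=S_A-S_C\equiv S_B$.) The paper does not invoke lifts here at all; it argues directly that from the gauge property and Proposition~\ref{chars}(5) one gets $S_X-S_Y\mp S_Z\in\Ik$ for some nonempty $Z$, and concludes $X\eq Y\setminus Z$ or $Y\eq X\setminus Z$. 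That conclusion needs $Z\subseteq Y$ (resp.\ $Z\subseteq X$), so the freedom in choosing $Z$ is used---but it is the freedom coming from Proposition~\ref{chars}(5) and the choice of the positive representative $P$, not from \etre-lifts.
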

	     
\proof
Given a gauge ideal $\Ik$ on $\RR$, define the equivalence $\eq$ on 
$\WW$ by 
$$X\eq Y\ \Iff\ \ S_{X}-S_{Y}\in\Ik.$$
We show that $\eq$ satisfies conditions \euno-\ecinque\ of an equinumerosity 
relation.\\
 \euno\ is trivial and \ecinque\ 
follows  from Proposition \ref{chars}$(2)$, because
$$S_{X\* Y}-S_{X'\* Y'}=(S_{X}-S_{X'})S_{Y}+S_{X'}(S_{Y}-S_{Y'}).$$  
\etre\ holds 
because $\Ik_{0}\incl\Ik$. We are left with
the trichotomy condition \edue. First of all observe that at most 
one of the conditions 
$ X\eq Y$, $X\eq X'\pincl Y$, and $Y\eq Y'\pincl X$ can hold. \Pes, by
assuming the first two conditions one would get  $$S_{Y\/X'}=S_{Y}-S_{X'}=
S_{Y}-S_{X} +S_{X}-S_{X'}\in\Ik\cap \RP,$$
against the second property of gauge ideals.
Finally, if $ 
S_{X}-S_{Y}\not\in\Ik$, then  by combining the third property of 
gauge ideals with Proposition \ref{chars}$(5)$, one obtains 
$S_{X}-S_{Y}\mp S_{Z}\in \Ik$ for some nonempty point set $Z$. Hence 
either $X\eq Y\/Z$ or $Y\eq X\/Z$.
  
  \smallskip

 Conversely, given an
 equinumerosity relation $\eq$,   let $\Ik$ be the two-sided ideal of $\RR$ 
 generated by 
 the set $\{S_{X}-S_{Y}\mid X\eq Y\}\cup \{t_{0}-1\}$. 
 Then  $\Ik_{0}\incl\Ik$, by Proposition \ref{chars}$(4)$. Moreover,
 by Proposition \ref{chars}.$(6)$, 
 every $S\in\RR$ is congruent modulo $\Ik$ to a difference 
 $S_{X}-S_{Y}$, which in turn is congruent to $\pm S_{Z}$ for some 
 $Z$, by property \edue. So $\Ik$ is gauge, provided that it is 
 disjoint from $\RP$.
   
  In order to prove that $\Ik\cap\RP=\0$, remark that,  
 by points $(2),(3),$ and 
 $(4)$ of Proposition \ref{chars},
 every element  $S\in\Ik$ 
 can be written as 
 $$S=a(t_{0}-1)+\sum_{i=1}^{m}(S_{X_{i}}-S_{Y_{i}})$$
 for suitable (not necessarily different) equinumerous point sets 
 $X_{i}\eq Y_{i}$.
 
 If no term in $S$ is negative, then $a\le 0$, and every 
 element of each $Y_{i}$ belongs to some $X_{j}$.  
 Partition $Y_{1}$ as 
 $Y_{1}=\bigcup_{j=1}^{m}Y_{1j}$, with $Y_{1j}\incl X_{j}$. 
 Put $X'_{i}=X_{i}\/ Y_{1i}$: then 
 $$S=a(t_{0}-1)+\sum_{i=1}^{m}S_{X'_{i}}-\sum_{i=2}^{m}S_{Y_{i}},\ \ 
 \mbox{and}\ \ \sum_{i=1}^{m}\nk(X'_{i})=\sum_{i=2}^{m}\nk(Y_{i}).$$
 So $Y_{2}\incl\bigcup_{i=1}^{m}X'_{i}$, and we can proceed as above by 
 subtracting each element of $Y_{2}$ from an appropriate $X'_{i}$, 
 thus obtaining sets $X''_{i}$ that satisfy the conditions
 $$S=a(t_{0}-1)+\sum_{i=1}^{m}S_{X''_{i}}-\sum_{i=3}^{m}S_{Y_{i}},\ \ 
 \mbox{and}\ \ \sum_{i=1}^{m}\nk(X''_{i})=\sum_{i=3}^{m}\nk(Y_{i}).$$
 Continuing this procedure with $Y_{3},\ldots,Y_{m}$ we reach the 
 situation
 $$S=a(t_{0}-1)+\sum_{i=1}^{m}S_{X^{(m)}_{i}},\ \ 
 \mbox{and}\ \ \sum_{i=1}^{m}\nk(X^{(m)}_{i})=0.$$
 Hence all sets $X^{(m)}_{i}$ are empty, and $S=a(t_{0}-1)$. But then 
 $a=0$, because $S$ has no negative terms. 
 So $\Ik\cap\RR^{+}=\0$, and the ideal $\Ik$ is gauge.
 
 In particular one obtains the condition $(*)$ 
 $$S_{X}-S_{Y}\in\Ik\ \Iff\ \ X\eq Y.$$
 In fact, if $X\not\eq Y$, say $X\eq X'\pincl Y$, then 
 $S_{X}-S_{X'}= S_{X}-S_{Y}+S_{Y\/X'}\in\Ik$. Hence $S_{X}-S_{Y}$ cannot 
 be in $\Ik$, because otherwise $S_{Y\/X'}\in\Ik\cap \RP$.
 So the 
 equinumerosity $\eq$ corresponding to the ideal $\Ik$ is precisely the one 
 we started with, and the correspondence between equinumerosities and 
 gauge ideals is biunique. 
     
 Given $\Ik$ and $\eq$, let $\nk:\WW\to\Nk$ be the numerosity function associated to $\eq$. 
 In order to make the diagram $(**)$ commutative, one has to put 
 $\tau(\nk(X))=S_{X}+\Ik$. This definition is 
  well posed  by $(*)$, and it provides a semiring homomorphism of $\Nk$ 
  into $\RR/\Ik$, by
  Proposition \ref{chars}$(2)$. Moreover $\tau$ is one-to-one and 
  onto $(\RP\!+\Ik)/\Ik$
  because of
  Proposition 
 \ref{chars}$(5)$. So $\tau$ can 
  be uniquely extended to the required ring isomorphism $\tau:\Rk\to\RR/\Ik$, by
  Proposition \ref{chars}$(6)$.

		     \qed

\section{Asymptotic numerosities and quasi-selective ultrafilters}\label{asy}

\medskip
Various measures of size for infinite sets of 
positive integers  $A\incl\N^{+}$, commonly used in number theory, are 
obtained by considering the sequence of ratios $$\frac{|\{a\in A\mid a\le 
n\}|}{n}.$$
In fact, the \emph{(upper, lower) asymptotic density} of $A$ are 
defined as the limit 
(superior, inferior) of this sequence.
This procedure might be viewed as measuring \emph{the ratio between 
the numerosities of 
$A$ and $\N^{+}$}. In this perspective, one should assume that $\nk(A)\le\nk(B)$ 
whenever the sequence of ratios for $B$ dominates that for $A$.
So one is led to introduce the following notion of 
``asymptotic'' equinumerosity 
relation.

\begin{definition}\label{dasy}
    \emph{For $X\incl\N^{k}$ put $$X_{n}=\{x\in X\mid x_{i}\le n\ 
    \mbox{for}\ i=1,\ldots,k\}.$$
    The equinumerosity relation $\eq$ is \emph{asymptotic} if:}
    
\smallskip
\begin{enumerate}
	\item[\easy] \emph{  If $|X_{n}|\le |Y_{n}|$ for all $n\in\N$,
	then there exists $Z\incl Y$ such that $X\eq Z$. }
    \end{enumerate}
    
\end{definition}
\medskip
Remark that sets $X$ and $Y$ are not assumed to be of the \emph{same 
dimension}. In fact, at the end of this section we shall use the 
condition \easy\ to give a notion of ``quasi-numerosity'' that is 
defined on \emph{all sets of tuples} of natural numbers.
 
According to this definition, if $\nk:\WW\to\Nk$ is the numerosity function 
associated to an
 equinumerosity $\,\eq$, then $\,\eq$ is asymptotic if and only if 
  $\nk$ satisfies the 
     following property for all  $X,Y\in\WW$:
     $$|X_{n}|\le |Y_{n}|\ \mbox{\rm{for all}}\ n\in\N\ \ \Imp\  
     \ \nk(X)\le \nk(Y).$$

     In the following theorem we give a nice algebraic characterization of 
asymptotic numerosities. Namely   
   \begin{itemize}
       \item Let   $\eb^{n}$ be the sequence made up of 
  $(n+1)$-many ones. For $S\in \RR$\ let
 $ S(\eb^{n})$ be the value taken by $S$ when $1$ is assigned 
  to the variables $t_{j}$ for $0\le j\le n$, while $0$ is assigned 
  to the remaining
 variables. 
 So 
 \begin{center}
     $|X_{n}|=S_{X}(\eb^{n})$ for every point set $X$.
 \end{center}   
       
\end{itemize}
   Then we have
    \begin{thm}\label{tult} Define the map $\Phi:\RR\to\Z^{\N}$ by
       $\Phi(S)= \la\, S(\eb^{n})\,\ra_{ n\in\N}$. Then

\smallskip
\noindent
$(i)$  $\Phi$ is a ring homomorphism, whose kernel $\Kk$
	    is disjoint from $\RP$.
	     The range of $\Phi$ is  the subring  $\,\Pp$ of $\,\Z^{\N}$ consisting of all
	 polynomially bounded sequences 
	 $$\Pp=\{g:\N\to\Z\mid \exists k,m\, \forall n>m\ 
	 |g(n)|<n^{k}\}\incl \Z^{\N}.$$
	
	    \noindent
	    $(ii)$ Let $\nk:\WW\to\Nk$ be the numerosity function 
	    associated to an asymptotic equinumerosity $\eq$.
	    Then there exists a unique ring homomorphism 
    $\psi$  of $\Pp$ onto the ring $\Rk$
    generated by $\Nk$ such that 
the following diagram commutes

 \medskip
	   \bigskip
	   \begin{center}
	   \begin{picture}(180,60)
	      \put(0,0){\makebox(0,0){$\Nk$}}
	      \put(90,0){\makebox(0,0){$\Rk$}}
	      \put(180,0){\makebox(0,0){$\Pp$}}
	      \put(45,60){\makebox(0,0){$\WW$}}
	      \put(132,60){\makebox(0,0){$\RR$}}
	      \put(45,6){\makebox(0,0){$j$}}
	      \put(135,6){\makebox(0,0){$\psi$}}
	      \put(12,36){\makebox(0,0){$\nk$}}
	      \put(161,36){\makebox(0,0){$\Phi$}}
	      \put(90,66){\makebox(0,0){$\Sg$}}
	      \put(12,0){\vector(1,0){64}}
	      \put(170,0){\vector(-1,0){66}}
	      \put(40,52){\vector(-1,-1){40}}
	      \put(135,52){\vector(1,-1){40}}
	      \put(57,60){\vector(1,0){62}}
	   \end{picture}
	   \end{center}
	   
	   \bigskip	   	   \noindent
	   $($where $\Sg$ maps any $X\in\WW$ to its characteristic
	    series $S_{X}\in\RP,$ 
	    and $j$ is the natural 
	    embedding.$)$  
	   
	   \medskip
	    \noindent
	    $(iii)$ Under the hypotheses in $(ii)$, there exists a quasi-selective ultrafilter $\U$ on $\N$ such that
the sequence	    $g\in\Pp$ belongs to kernel of  $\psi$ if and only if its 
	    zero-set
             $Z(g)=\{n\mid g(n)=0\}$ belongs to $\U$. In particular
 	     \begin{equation*}
          X\eq Y\ \Iff\ \{n\in\N\mid |X_{n}|= |Y_{n}|\}\in\U. 
	  \tag{$\sharp$}
             \label{sh}
         \end{equation*}

    \end{thm}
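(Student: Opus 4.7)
The plan is to handle the three parts in sequence, with the main technical work appearing in part (iii), where an order-preservation lemma drives both the zero-set characterization and the quasi-selectivity of $\U$.

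For part (i), $\Phi$ is a ring homomorphism because it packages the evaluation maps $S\mapsto S(\eb^n)$, each of which is a ring homomorphism $\RR\to\Z$; its kernel is disjoint from $\RP$ because a nonzero $P\in\RP$ has some positive coefficient $a_{x_0}$, and then $P(\eb^n)\ge a_{x_0}>0$ once $n\ge\max(x_0)$. The image lies in $\Pp$ since $\Phi(S_X)(n)=|X_n|\le(n+1)^k$ when $X\subseteq\N^k$, and Proposition~\ref{chars}(3) writes every $S\in\RR$ as a signed sum of characteristic series plus a constant. Conversely, given $g\in\Pp$ with $|g(n)|\le n^k$ for $n$ large, I would realize it as $\Phi(S)$ with $S=g(0)+\sum_{m\ge 1}M_m$, where each $M_m$ is an integer combination with coefficients in $\{0,\pm 1\}$ of monomials of degree $k+1$ having maximum index exactly $m$; there are $(m+1)^{k+1}-m^{k+1}$ such monomials, which dominates $|g(m)-g(m-1)|$ for $m$ large, while small $m$ is absorbed into the constant term.

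For part (ii), the crucial step is to show $\ker\Phi\subseteq\Ik_\eq$. Given $S\in\ker\Phi$, Proposition~\ref{chars}(6) yields $S=S_X-S_Y+R$ with $R\in\Ik_0$ a finite $\RR$-linear combination of generators $t_n-1$; each such generator contributes a sequence supported in $\{0,\ldots,n-1\}$ under $\Phi$, so $\Phi(R)$ has finite support, bounded by some $M$. Hence $|X_n|=|Y_n|$ for every $n\ge M$. Splitting $X=X_M\sqcup X'$ with $X'=X\setminus[0,M]^k$, and analogously for $Y$, one obtains $|X'_n|=|Y'_n|$ for every $n\in\N$, so the asymptotic axiom \easy\ applied both ways together with trichotomy \edue\ yields $X'\eq Y'$. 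Since $X_M\eq Y_M$ by Proposition~\ref{fin}, the Sum Principle gives $X\eq Y$, so $S_X-S_Y\in\Ik_\eq$ and $S\in\Ik_\eq$. The map $\psi$ is thereby induced, and its uniqueness and surjectivity onto $\Rk$ follow from those of $\Phi$.

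For part (iii), set $\U=\{A\subseteq\N:\psi(\chi_A)=1\}$. Since $\Rk$ is a linearly ordered (hence integral) domain and $\chi_A^2=\chi_A$, the image $\psi(\chi_A)$ is an idempotent, hence $0$ or $1$; the identity $\chi_A+\chi_{A^c}=1$ forces exactly one of $A,A^c$ to lie in $\U$, and the remaining ultrafilter axioms follow from multiplicativity of $\psi$. Non-principality is immediate from $\chi_{\{n\}}=\Phi(t_n-t_{n+1})$ with $t_n-t_{n+1}\in\Ik_0\subseteq\Ik_\eq$. The key lemma is that $\psi$ preserves non-negativity: if $g\in\Pp$ satisfies $g\ge 0$ pointwise, then $\psi(g)\ge 0$ in $\Rk$. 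To prove this, decompose any $g\in\Pp$ via its increments $\Delta g(n)=g(n)-g(n-1)$ (setting $g(-1)=0$): split $\Delta g=\Delta g_+-\Delta g_-$ into non-negative parts and set $H_\pm(n)=\sum_{m\le n}\Delta g_\pm(m)$, obtaining non-decreasing, non-negative, polynomially bounded sequences with $g=H_+-H_-$. Each such $H$ (after adjusting finitely many small values to enforce $H(0)\le 1$, which lies in $\ker\psi$ since $\psi(\chi_{\{n\}})=0$) is realized as $|X_n|$ for a point set $X\subseteq\N^{k+1}$ in sufficiently high dimension, because the shells $[0,n]^{k+1}\setminus[0,n-1]^{k+1}$ have $(n+1)^{k+1}-n^{k+1}$ elements, which for $k$ at least the polynomial degree of $H$ dominates the increments $\Delta H(n)$. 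Thus $\psi(g)=\nk(X_+)-\nk(X_-)$, and when $g\ge 0$ one has $H_+\ge H_-$ pointwise, so $|X_{-,n}|\le|X_{+,n}|$ for every $n$; by \easy\ there is $Z\subseteq X_+$ with $X_-\eq Z$, so $\nk(X_-)\le\nk(X_+)$ and $\psi(g)\ge 0$.

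From this lemma the zero-set characterization follows: if $\psi(g)=0$, split $g=g^+-g^-$ with disjoint supports, so $\psi(g^+)\cdot\psi(g^-)=0$ in the domain $\Rk$, and after reducing to $g\ge 0$ one has $g\ge\chi_A$ pointwise where $A=\{n:g(n)>0\}$; order-preservation then forces $\psi(\chi_A)=0$, whence $Z(g)=\N\setminus A\in\U$. The converse is immediate from $g=g\cdot(1-\chi_{Z(g)})$ and $\psi(1-\chi_{Z(g)})=0$. Quasi-selectivity of $\U$ is then a direct consequence: for $f:\N\to\N$ with $f(n)\le n$, the order-preservation lemma places $\psi(f)\in\Nk$, so $\psi(f)=\nk(Z)$ for some point set $Z$; the non-decreasing sequence $g(n)=|Z_n|$ satisfies $\psi(g)=\psi(f)$, and the zero-set characterization yields $\{n:f(n)=g(n)\}\in\U$. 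The principal obstacle throughout is the realization step in the order-preservation lemma: the ambient dimension must be chosen large enough to accommodate the polynomial growth of $H_\pm$, and the boundary behavior at small indices must be absorbed into $\ker\psi$ via finitely-supported corrections; the remainder of the theorem is a bookkeeping assembly of these ingredients.
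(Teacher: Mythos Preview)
Your proof is correct, and its overall architecture matches the paper's, but you organize part~(iii) around a different central device.

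For part~(ii), the paper dispatches $\Kk\subseteq\Ik_{\eq}$ in one line by invoking Theorem~\ref{ser}: since $\eq$ is asymptotic, the corresponding gauge ideal $\Ik$ contains $\Kk$, and then $\psi$ falls out of the factorization $\Rk\cong\RR/\Ik\cong\Pp/\Phi[\Ik]$. Your explicit argument via Proposition~\ref{chars}(6), the finite support of $\Phi(R)$ for $R\in\Ik_0$, and the splitting $X=X_M\sqcup X'$ is a correct unpacking of what that one line encodes, and has the virtue of not depending on the full gauge-ideal machinery.

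For part~(iii), the approaches genuinely diverge. The paper leans on the gauge structure: every $S\in\RR$ is congruent modulo $\Ik$ to $\pm P$ with $P\in\RP$, and since $\Phi(P)$ is automatically nondecreasing, surjectivity of $\Phi$ gives quasi-selectivity directly. For the zero-set characterization, the paper uses the algebraic trick that $g^2+\chi_{Z(g)}-1\ge 0$ pointwise, so $\psi(g)^2\ge 1-\psi(\chi_{Z(g)})$, forcing $\psi(g)\ne 0$ whenever $Z(g)\notin\U$. You instead isolate the order-preservation lemma ($g\ge 0$ pointwise $\Rightarrow\psi(g)\ge 0$) as the engine, prove it by realizing nondecreasing sequences as counting sequences $\langle|X_n|\rangle$ and invoking \easy, and then derive both the zero-set characterization (via $g^{\pm}$ and $g^+\ge\chi_{\{g>0\}}$) and quasi-selectivity (via $\psi(f)=\nk(Z)$ and $g(n)=|Z_n|$) from it. Note that the paper's one-line assertion ``$\psi(g^2+\chi_{Z(g)}-1)$ is non-negative'' is exactly an instance of your lemma, which the paper leaves implicit; your route makes this step visible. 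Conversely, the paper's use of the gauge property ``$S\equiv\pm P\pmod\Ik$'' gives a slightly slicker path to quasi-selectivity once Theorem~\ref{ser} is in hand. Both arguments ultimately rest on the same realization step (non\-decreasing polynomially bounded sequences come from characteristic series) combined with \easy.
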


    \bigskip
   \dim 
   
   $(i)$ The first  assertion is immediate. In order to characterize 
   the range of $\Phi$, recall that
   there are $n^{k}$ 
   (noncommutative) monomials of degree $k$ in $n$ variables. On 
   the one hand,  if $\deg 
   S<d$ and all the coefficients of $S$ are bounded by $B$, then
   $|S(\eb^{n})|< Bn^{d}$. On the other 
   hand, assume that $|g(n)-g(n-1)|<n^{k}$ for $n>m$, say. Pick a 
   (homogeneous)
   polynomial  $p(t_{0},\ldots,t_{m})$ such that  $p(\eb^{0})=g(0),\ldots,
   p(\eb^{m})=g(m)$. Then, for each $n>m$, add (or subtract) exactly 
   $|g(n)-g(n-1)|$ monomials of degree not exceeding $k$ in the variables 
   $t_{0},\ldots,t_{n}$,  where $t_{n}$ actually appears. 
   The resulting series $S\in\RR$ clearly verifies $S(\eb^{n})=g(n)$.
   Notice that if all monomials are chosen of the same degree, then $S$ is 
   the difference of two characteristic series. Hence any $S\in\RR$ 
   is congruent modulo $\Kk$ to a difference $S_{X}-S_{Y}$.
   
   \smallskip
   $(ii)$ The gauge ideal $\Ik$ corresponding to $\eq$ contains $\Kk$ 
   because the equinumerosity $\eq$ is asymptotic. Therefore 
   $\Rk\cong\RR/\Ik\cong(\RR/\Kk)/(\Ik/\Kk)\cong\Pp/\Phi[\Ik]$. So, 
   in the notation of Theorem \ref{ser}, 
   $\psi$ is the unique ring homomorphism such that 
   $\tau\circ\psi\circ\Phi=\pi$.
   
    \smallskip
   $(iii)$ The kernel  $\ker\psi=\II$  is a prime ideal of $\Pp$, because 
   $\Rk$ is a domain. The idempotents of $\Pp$ are all and only the
   characteristic functions $\chi_{A}$ of subsets 
   $A\incl\N$. Hence, for every $A\incl\N$, the ideal $\II$ contains 
   exactly one of the two  complementary 
   idempotents $\chi_{A},1-\chi_{ A}=\chi_{\N\/ A}$. Moreover 
   $1-\chi_{A\cap 
   B}=1-\chi_{A}\chi_{B}=1-\chi_{A}+\chi_{A}(1-\chi_{B})$,
   and
   $A\incl B$ implies 
   $1-\chi_{ B}=(1-\chi_{A})(1-\chi_{B})$. Hence the set 
   $$\U=\{A\incl\N\,\mid\, 
   1-\chi_{A}\in\II\,\}$$ is an ultrafilter on $\N$.
   
   Now $g=g(1-\chi_{Z(g)})$, so $g$ belongs to $\II$ whenever 
   $Z(g)\in\U$. Conversely, if 
   $Z(g)\not\in\U$, then $\psi(\chi_{Z(g)})=0$ and 
   $\psi(g^{2}+\chi_{Z(g)}-1)$ is non-negative. It follows that 
   $\psi(g)\ne 0$.
  In particular
   $$X\eq Y\ \Iff\ \Phi(S_{X}-S_{Y})\in\ker\psi\ \ \Iff\
\{n\in\N\mid |X_{n}|- |Y_{n}|=0\}\in\U$$

   It remains to show that $\U$ is 
quasi-selective. To this aim, remark that, modulo 
the gauge ideal $\Ik=\ker(\psi\circ\Phi)=\Phi^{-1}[\II]$,
each series in $\RR$ is equivalent 
 to $\,\pm P$ 
for some $P\in\RR^{+}$. In turn such a $P$ is mapped by 
$\Phi$ to a nondecreasing element of $\Pp$. As $\Phi$ is surjective, 
we may conclude that every polynomially bounded non-negative sequence 
is $\U$-equivalent to a nondecreasing one. Hence $\U$ is 
quasi-selective.
   
   \qed 
 
   \bigskip
 This construction allows for classifying all asymptotic 
 equinumerosity relations
 by means of quasi-selective ultrafilters on $\N$ as follows:

\begin{cor}\label{tasy}
There exists a biunique correspondence between asymptotic 
equinumerosity relations on the
space of point sets over $\N$ and quasi-selective ultrafilters on $\N$. 
In 
this correspondence, if the equinumerosity $\eq$ corresponds to the 
ultrafilter  $\U$,  then
 \begin{equation*}
          X\eq Y\ \Iff\ \{n\in\N\mid |X_{n}|= |Y_{n}|\}\in\U. 
	  \tag{$\sharp$}
             \label{sh}
         \end{equation*}
More precisely, let $\Nk$ be the set of numerosities of $\eq$, 
and let 
$\nk$ be the corresponding numerosity function.
Then the map $$\fg:\nk(X)\mapsto\, [\la\, |X_{n}|\,\ra_{{n\in\N}}]_{\U} $$
is an isomorphism of ordered semirings between $\Nk$ 
and  the initial segment $\Pp_{\U}$ of the
ultrapower $\N\ult{\N}{\U}$ that
contains the classes of all polynomially bounded sequences.
In particular, asymptotic numerosities are, up to isomorphism,
nonstandard integers.

\end{cor}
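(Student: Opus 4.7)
Theorem~\ref{tult}(iii) already furnishes one direction of the desired bijection: every asymptotic $\eq$ induces a quasi-selective $\U$ satisfying $(\sharp)$.  What remains is to show that every quasi-selective $\U$ arises this way, after which the biuniqueness and the properties of $\fg$ will follow readily from Theorem~\ref{tult}.  My plan is therefore to fix a quasi-selective $\U$, to define
\[
X\eq_\U Y\iff\{n\in\N\mid |X_n|=|Y_n|\}\in\U,
\]
and to verify that $\eq_\U$ is an asymptotic equinumerosity, i.e.\ satisfies axioms \euno--\ecinque\ and \easy.

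Axioms \euno, \ecinque, and \etre\ follow instantly from the identities $|X_n\setminus Y_n|-|Y_n\setminus X_n|=|X_n|-|Y_n|$, $|(X\times Y)_n|=|X_n|\cdot|Y_n|$, and $|(\{P\}\times A)_n|=|A_n|$ for $n$ larger than every coordinate of $P$ (a cofinite, hence $\U$-large, condition).  For the trichotomy \edue, mutual exclusivity is easy: whenever $X'\pincl X$ one has $|X_n|>|X'_n|$ cofinitely, so at most one of (a), (b), (c) can hold; the existence of at least one reduces, via the ultrafilter partition of $\N$ according to the sign of $|X_n|-|Y_n|$, to the axiom \easy\ itself applied to the appropriate inclusion.

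The main technical obstacle is therefore \easy: given $|X_n|\le|Y_n|$ for all $n$, one must build $Z\subseteq Y$ with $|Z_n|=|X_n|$ on a set in $\U$.  The plan is a greedy construction.  Enumerate $Y=\{y_1,y_2,\ldots\}$ in level order (level $=$ maximum coordinate), so that a subset $Z=\{y_j\mid j\in J\}$ satisfies $|Z_n|=|J\cap[1,|Y_n|]|$; the task becomes to pick $J\subseteq\N^{+}$ with $|J\cap[1,|Y_n|]|=|X_n|$ for $\U$-many $n$.  The defect function $\delta(n)=|Y_n|-|X_n|$ is nonnegative and polynomially bounded, hence by Proposition~\ref{qseleq} $\U$-equivalent to a nondecreasing $\delta'$; on any set $U\in\U$ where $\delta=\delta'$ the increments of $|X_n|$ are controlled on average by those of $|Y_n|$, and a level-by-level greedy choice --- adding a new rank to $J$ precisely when the running count otherwise lags --- catches up whenever a spurt of $|X_n|$ is followed by a stretch of slower growth.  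Combining this with Theorem~\ref{FU} applied to $\delta'$ and the interval-partition structure of Proposition~\ref{int1}, one refines $U$ to a $\U$-large set on which the greedy output satisfies $|Z_n|=|X_n|$.  This combinatorial bookkeeping for the catch-up phase is the hardest point of the proof.

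Once \easy\ is in hand, the remaining assertions are formal.  Starting from $\eq_\U$ and invoking Theorem~\ref{tult}(iii) returns an ultrafilter characterized by the ideal of polynomially bounded sequences vanishing on a set of $\U$, whose idempotent analysis (as in that theorem) recovers $\U$ itself; combined with $(\sharp)$, this gives the biuniqueness of the correspondence.  The map $\fg:\nk(X)\mapsto[\langle|X_n|\rangle_{n\in\N}]_\U$ is well defined and injective by $(\sharp)$, preserves sum, product, and order by the familiar identities for the counting function and by \easy, and lands in the initial cut $\Pp_\U$ of $\N\ult{\N}{\U}$ because $|X_n|\le(n+1)^k$ for $X\subseteq\N^k$.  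Surjectivity onto $\Pp_\U$ follows from Theorem~\ref{tult}(i) together with Proposition~\ref{qseleq}: every polynomially bounded sequence is $\U$-equivalent to a nondecreasing one, and any nondecreasing polynomially bounded function is realized as $|X_n|$ for a suitable $X\subseteq\N^k$ obtained by distributing, at each level $n$, the required number of new points.
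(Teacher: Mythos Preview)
Your overall architecture matches the paper's: one direction via Theorem~\ref{tult}, the other by defining $\eq_\U$ through $(\sharp)$ and verifying the axioms; and the analysis of $\fg$ is fine.  There are, however, two related weaknesses in the crucial middle step.

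First, there is a genuine logical gap in your derivation of \edue.  You reduce the ``existence'' clause of trichotomy to \easy\ by partitioning $\N$ according to the sign of $|X_n|-|Y_n|$.  But if, say, $\{n:|X_n|<|Y_n|\}\in\U$, you need $Z\subsetneq Y$ with $Z\eq_\U X$, and the hypothesis of \easy\ as you state it (``$|X_n|\le|Y_n|$ for \emph{all} $n$'') is not available: the complementary set $\{n:|X_n|>|Y_n|\}$ may well be infinite.  The paper avoids this by proving directly the strong form
\[
\{n:|X_n|\le|Y_n|\}\in\U\ \Longrightarrow\ \exists\,Z\subseteq Y\ (Z\eq_\U X),
\]
which yields \easy\ and \edue\ simultaneously.

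Second, your construction for \easy\ is vaguer and more elaborate than necessary.  You invoke Theorem~\ref{FU}, Proposition~\ref{int1}, a greedy choice, and an unspecified ``catch-up phase''; none of this is needed.  The paper's argument is a one-line observation: since $|X_n|$, $|Y_n|$, and $\delta(n)=|Y_n|-|X_n|$ are all polynomially bounded, quasi-selectivity (Proposition~\ref{qseleq}) gives $V\in\U$ on which $\delta$ is nondecreasing.  Then for consecutive $m<m'$ in $U\cap V$ (where $U=\{n:\delta(n)\ge0\}$) one has $|X_{m'}|-|X_m|\le|Y_{m'}|-|Y_m|=|Y_{m'}\setminus Y_m|$, so one can simply place exactly $|X_{m'}|-|X_m|$ elements of $Y_{m'}\setminus Y_m$ into $Z$.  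Starting from any $|X_{m_0}|$-element subset of $Y_{m_0}$, this gives $|Z_m|=|X_m|$ for every $m\in U\cap V$, with no catching up at all.  This clean formulation also makes it transparent that the strong form (with $U\in\U$ rather than $U=\N$) goes through unchanged, closing the gap above.
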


\proof
Given an asymptotic equinumerosity $\eq$, let $\Phi,\psi$, and 
the ultrafilter $\U=\U_{\eq}$ be as in Theorem \ref{tult}. Then
$\U$ is quasi-selective, and 
$$X\eq Y\ \Iff\ \Phi(S_{X}-S_{Y})\in\ker\psi\ \ \Iff\
\{n\in\N\mid |X_{n}|- |Y_{n}|=0\}\in\U,$$
that is (\ref{sh}).

Conversely, given a quasi-selective ultrafilter $\U$ on $\N$, define 
the equivalence $\eq_{\U}$ on $\WW$ by (\ref{sh}). Then \euno,\etre, and 
\ecinque\ are immediate. We prove now the following strong form of \easy\ 
that implies also 
\edue: 
$$\{n\in\N\mid 
|X_{n}|\le|Y_{n}|\}\in\U\ \ \Imp\ \ \exists Z\incl Y\ 
\mbox{s.t.}\ Z\eq_{\U}X.$$
Assume that  $\{n\in\N\mid 
|X_{n}|\le|Y_{n}|\}=U\in\U$: by quasi-selectivity there exists 
$V\in\U$ such that $|X_{n}|$, $|Y_{n}|$, and $|Y_{n}|-|X_{n}|$ are
nondecreasing on $V$. Then one can isolate from $Y$ a subset $Z\eq 
X$ in the following way: if $m,m'$ are consecutive elements of $U\cap V$,  put in $Z$ exactly 
$|X_{m'}|-|X_{m}|$ elements of $Y_{m'}\/ Y_{m}$. 

So $\nk(X)<\nk(Y)$ if and only if $\fg(\nk(X))<\fg(\nk(Y))$ in 
$\Pp_{\U}$, and the last assertion of the theorem follows. Finally, 
remark that every subset $A\incl\N$ can be written as 
$A=\{n\in\N\mid |X_{n}|= |Y_{n}|\}$ for suitable point sets $X,Y$. So 
the biconditional (\ref{sh}) uniquely determines the ultrafilter 
$\U=\U_{\eq}$, and one has
\begin{itemize}
    \item  $\U_{(\eq_{\U})}=\,\U$ for all quasi-selective ultrafilters 
    $\U$;

    \item $\eq_{(\U_{\eq})}=\,\eq$ for 
all asymptotic equinumerosity relations $\eq$. 
\end{itemize}
 Hence the 
correspondence between $\eq$ and $\U$ is biunique.
 
\qed

\medskip
It is worth remarking that the property \easy\  yields at once  \etre\ 
and
commutativity of product,
as well as many other 
natural instantiations of the fourth Euclidean common notion:
\begin{quote}
    \emph{``Things applying $[$exactly$]$ to one another are equal 
    to one another.''}
\end{quote}
More precisely, if the \emph{support} of a tuple is defined by $$\mbox{supp}\, 
(x_{1},\ldots,x_{k})=\{x_{1},\ldots,x_{k}\},$$ then 
all \emph{support preserving bijections} can be taken as 
``congruences'' for asymptotic numerosities, because any such 
$\sg:X\to Y$ maps $X_{n}$ onto $Y_{n}$ for all $n\in\N$.

Actually, in order to give a ``Cantorian'' characterization of asymptotic 
equinumerosities, we isolate a wider class of bijections, namely
\begin{definition}
    Let $\U$ be a nonprincipal ultrafilter on $\N$. A bijection $\sg:X\to Y$ 
    is a
    $\U$-congruence if $\{n\in\N\mid \sg[X_{n}]=Y_{n}\}\in\U$.
\end{definition}
When the ultrafilter $\U$ is quasi-selective, the $\U$-congruences
determine an asymptotic equinumerosity:
\begin{cor}
    Let $\eq$ be an asymptotic equinumerosity, and let $\U$ be the 
    corresponding quasi-selective ultrafilter. Then $X\eq Y$ if and 
    only if there exists a $\U$-congruence $\sg:X\to Y$.
    \qed
    
\end{cor}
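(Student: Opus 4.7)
My plan is to prove both implications using only the characterization $(\sharp)$ from the preceding corollary; quasi-selectivity is already encoded in the existence and properties of $\U$, so no new appeal to it is needed.

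For the easy direction, suppose $\sg:X\to Y$ is a $\U$-congruence. Let $W=\{n\in\N\mid \sg[X_n]=Y_n\}\in\U$. For each $n\in W$, since $\sg$ restricted to $X_n$ is a bijection onto $Y_n$, we have $|X_n|=|Y_n|$. Hence $W\subseteq\{n\mid |X_n|=|Y_n|\}$, so the latter set lies in $\U$, and $X\eq Y$ by $(\sharp)$.

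For the nontrivial direction, assume $X\eq Y$. By $(\sharp)$, the set $U=\{n\mid |X_n|=|Y_n|\}$ belongs to $\U$; since $\U$ is nonprincipal, $U$ is infinite. Enumerate it as $U=\{u_0<u_1<u_2<\dots\}$, and note $u_k\to\infty$. Because $X=\bigcup_n X_n$ and $Y=\bigcup_n Y_n$ (every tuple has all coordinates below some $n$), we may write $X$ and $Y$ as disjoint unions of ``stripes''
\[
X=X_{u_0}\cup\bigcup_{k\ge 0}(X_{u_{k+1}}\setminus X_{u_k}),\qquad
Y=Y_{u_0}\cup\bigcup_{k\ge 0}(Y_{u_{k+1}}\setminus Y_{u_k}).
\]
For each $k\ge 0$, the telescoping identity
\[
|X_{u_{k+1}}\setminus X_{u_k}|=|X_{u_{k+1}}|-|X_{u_k}|=|Y_{u_{k+1}}|-|Y_{u_k}|=|Y_{u_{k+1}}\setminus Y_{u_k}|
\]
holds by the definition of $U$, and similarly $|X_{u_0}|=|Y_{u_0}|$. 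Hence we may pick arbitrary bijections $\sg_0:X_{u_0}\to Y_{u_0}$ and $\sg_k:X_{u_{k+1}}\setminus X_{u_k}\to Y_{u_{k+1}}\setminus Y_{u_k}$ for $k\ge 0$, and assemble them into a single bijection $\sg:X\to Y$.

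By construction $\sg[X_{u_k}]=Y_{u_k}$ for every $k$, so $\{n\mid \sg[X_n]=Y_n\}\supseteq U\in\U$, and $\sg$ is a $\U$-congruence, as required. There is no substantive obstacle here: the only point to check is that the piecewise construction exhausts $X$ and $Y$, which follows from $u_k\to\infty$ together with $X=\bigcup_n X_n$ and $Y=\bigcup_n Y_n$.
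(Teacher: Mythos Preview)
Your proof is correct. The paper states this corollary with only a \qed\ and gives no argument, treating it as immediate from the characterization $(\sharp)$ and the ``stripe'' construction already used in the proof of the preceding corollary; your write-up fills in exactly those details, so it matches the paper's intended approach. One small remark: because you are in the equality case $|X_n|=|Y_n|$ on $U$, the telescoping identity guarantees the stripe sizes match automatically, so---unlike the inequality case handled earlier in the paper---no further appeal to quasi-selectivity is needed here, just as you observe.
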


This point of view allows for an interesting generalization of the 
notion of asymptotic equinumerosity to \emph{all subsets} of 
$\EE=\bigcup_{k\in\N^{+}}\N^{k}$, namely
\begin{itemize}
    \item  put $\EE_{n}=\bigcup_{1\le k\le n}\{0,\ldots,n\}^{k}$; 

    \smallskip
    \item   let 
    $\U$ be a filter on $\N$;
    
\smallskip
    \item  call a map $\sg:\EE\to\EE$  a $\U$-\emph{isometry} if
    $\{n\mid\sg[\EE_{n}]=\EE_{n}\}\in\U;$  

    \smallskip
    \item let
	$\Sk_{\U}$ be the group of all $\U$-isometries, and 
     for $X,Y\incl\EE$ put $$X\eq_{\U} Y\ \  \Iff\ \ \mbox{there 
     exists}\  
    \sg\in\Sk_{\U}\ \mbox{such that}\ \sg[X]=Y.$$
    
\end{itemize}
If $\U$ is a quasi-selective ultrafilter we can now assign a 
``quasi-numerosity'' to every subset  of $\EE$,  namely its orbit 
under $\Sk_{\U}$:
$$\nku(X)=[X]_{\equ}=X^{\Sk_{\U}}=\{\sg[X]\mid \sg\in\Sk_{\U}\}.$$
Let $\Nku=\P(\EE)/\!\!\equ$ be the set of quasi-numerosities, and 
let $\SSU$ be the initial segment of the
    ultrapower $\N\ult{\N}{\U}$ determined by $\nk_{\U}(\EE)=[\la 
(n+1)\frac{(n+1)^{n}-1}{n}\mid n\in\N\ra]_{\U}$. 

Notice that if $X\incl\N^{k}$, then $X_{n}=X\cap \EE_{n}$ for $n\ge 
k$. So we have

\begin{thm}\label{tperm}
    The relation  $\eq_{\U}$ is an equivalence on $\P(\EE)$ that satisfies 
    the properties 
    \emph{\easy,\euno},\emph{\edue}, and, when restricted to $\WW$, agrees with 
    the asymptotic equinumerosity corresponding to $\U$.
    Moreover the map 
    $$\fg_{\U}:\nku(X)\mapsto\, [\la\, |X\cap \EE_{n}|\,\ra_{{n\in\N}}]_{\U} $$
preserves sums and is an order isomorphism  between $\Nku$ 
    and   $\,\SSU\incl \N\ult{\N}{\U}$.
\qed
\end{thm}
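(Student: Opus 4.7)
My approach centers on the translation formula
$$X\eq_\U Y\ \iff\ \{n\in\N\mid |X\cap\EE_n|=|Y\cap\EE_n|\}\in\U, \qquad (\dagger)$$
which I would establish first. The set $\Sk_\U$ is a group of permutations of $\EE$: identity, composition and inverse all preserve membership in $\U$ of $\{n\mid\sigma[\EE_n]=\EE_n\}$, so $\eq_\U$ is at once an equivalence. For $(\Rightarrow)$, $\sigma\in\Sk_\U$ with $\sigma[X]=Y$ satisfies $|\sigma[X\cap\EE_n]|=|Y\cap\EE_n|$ on the $\U$-set where $\sigma$ fixes $\EE_n$ setwise. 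For $(\Leftarrow)$, enumerate the witnessing set as $\{u_0<u_1<\dots\}$; since $\EE=\bigcup_i\EE_{u_i}$ and $|X\cap(\EE_{u_{i+1}}\setminus\EE_{u_i})|=|Y\cap(\EE_{u_{i+1}}\setminus\EE_{u_i})|$ by differencing, I can independently pick, for each $i$, a bijection of the ring $\EE_{u_{i+1}}\setminus\EE_{u_i}$ to itself sending $X\cap\text{ring}$ onto $Y\cap\text{ring}$ (and similarly on $\EE_{u_0}$). These assemble into a $\sigma\in\Sk_\U$ with $\sigma[X]=Y$. Note that this direction uses no special property of $\U$ beyond being a nonprincipal filter.

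From $(\dagger)$, the axioms \euno, \edue, and agreement with the asymptotic equinumerosity on $\WW$ (where $X_n=X\cap\EE_n$ holds for $n\ge k$ when $X\subseteq\N^k$) follow exactly as in Corollary \ref{tasy}, by direct translation into the linearly ordered ultrapower $\N\ult{\N}{\U}$. The axiom \easy\ requires quasi-selectivity: given $|X\cap\EE_n|\le|Y\cap\EE_n|$ for every $n$, use quasi-selectivity to find $V\in\U$ on which both $|X\cap\EE_n|$ and $|Y\cap\EE_n|-|X\cap\EE_n|$ are nondecreasing, then select from $Y$, ring by ring on $V$, exactly $|X\cap\EE_n|$ many points to form the required $Z\subseteq Y$. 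Injectivity of $\fg_\U$ is immediate from $(\dagger)$; preservation of sums of disjoint representatives follows from $|(X\cup Y)\cap\EE_n|=|X\cap\EE_n|+|Y\cap\EE_n|$ when $X\cap Y=\emptyset$; preservation of the order induced by \easy\ follows from the enhanced form just proved.

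The main obstacle is \emph{surjectivity} of $\fg_\U$ onto $\SSU$. Given $\alpha=[\la a_n\ra]_\U\le[\la|\EE_n|\ra]_\U$, I would first truncate to assume $0\le a_n\le|\EE_n|$ pointwise. The function $f(n)=|\EE_n|+n+1$ is primitive recursive, so by the corollary following Proposition \ref{clos} there exists a set in $\U$ whose enumeration $\{u_0<u_1<\dots\}$ satisfies $u_{k+1}>f(u_k)$, whence $|\EE_n|\in\fu$ via Theorem \ref{FU}$(ii)$. Therefore both $a_n$ and $|\EE_n|-a_n$ admit nondecreasing representatives modulo $\U$; replacing the representative of $a_n$ by its pointwise minimum with $|\EE_n|$ preserves monotonicity (since $|\EE_n|$ is itself nondecreasing), so on a common $V\in\U$ I obtain $a'_n$ with $a'_n$ and $|\EE_n|-a'_n$ both nondecreasing and $a'_n\le|\EE_n|$. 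Enumerating $V=\{v_0<v_1<\dots\}$, the monotonicity of $|\EE_n|-a'_n$ on $V$ yields $a'_{v_{i+1}}-a'_{v_i}\le|\EE_{v_{i+1}}\setminus\EE_{v_i}|$, so I build $X\subseteq\EE$ greedily---place any $a'_{v_0}$ points in $\EE_{v_0}$, then add $a'_{v_{i+1}}-a'_{v_i}$ fresh points from each successive ring, filling the remaining elements of $\EE$ arbitrarily. This gives $|X\cap\EE_n|=a'_n$ for all $n\in V$, so $\fg_\U(\nku(X))=\alpha$, and the proof is complete.
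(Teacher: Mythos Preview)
The paper gives no proof of this theorem: it is stated with an immediate \qed, the intended justification being that it follows by the same arguments as Corollary~\ref{tasy} once one replaces the cubes $\{0,\ldots,n\}^k$ by the sets $\EE_n$ (the paper even remarks that $X_n=X\cap\EE_n$ for $n\ge k$ when $X\subseteq\N^k$). Your proposal is a correct and complete write-up of precisely this implicit argument: the translation formula $(\dagger)$ reduces everything to the ultrapower, the group structure of $\Sk_\U$ gives equivalence, the strong \textsf{(E0)} (exactly as in the proof of Corollary~\ref{tasy}) gives \textsf{(E2)} and order preservation, and the key new ingredient for surjectivity---that $n\mapsto|\EE_n|$ lies in $F_\U$---you obtain correctly via the primitive-recursive corollary to Proposition~\ref{clos} and Theorem~\ref{FU}(ii).

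Two cosmetic remarks. First, your sentence ``filling the remaining elements of $\EE$ arbitrarily'' is superfluous: since $\U$ is nonprincipal, $V$ is cofinal and $\EE=\bigcup_i\EE_{v_i}$, so the ring-by-ring construction already defines $X$ on all of $\EE$. Second, you invoke ``the enhanced form just proved'' for order preservation, but your explicit argument for \textsf{(E0)} is stated only under the hypothesis ``for every $n$''; the enhanced version (hypothesis holding only on a set in $\U$) follows by the identical construction after intersecting $V$ with the witnessing set, and it would be cleaner to say so.
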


  It is worth mentioning that both the ``multiplicative'' 
properties 
\etre\ and \ecinque\  can fail for sets of 
infinite dimension. \Pes $$\{(0,1)\}\*\EE\pincl \{0\}\*\EE\pincl \EE$$ 
have increasing quasi-numerosities, thus contradicting both 
\etre\ and \ecinque.

\section{Final remarks and open questions}\label{froq}

It is interesting to remark that the non-selective quasi-selective 
ultrafilter $\V$ defined in the proof of Theorem \ref{constr} satisfies the 
following ``weakly Ramsey'' property:\footnote{~
Ultrafilters satisfying this property have been introduced in 
\cite{bl74} under the name \emph{weakly Ramsey}, and then generalized 
to $(n+1)$-Ramsey ultrafilters in \cite{ros}.}
\begin{center}
   \emph{for any finite coloring of $[\N]^{2}$ there is $U\in\V$ 
   such that $[U]^{2}$ has only two colors.}    
\end{center}

It is easily seen that if $\V$ is weakly Ramsey, then every function is either 
one-to-one or nondecreasing modulo $\V$.
So both weakly Ramsey and quasi-selective  ultrafilters are  
P-points. However  
these two classes are different whenever there exists a non-selective 
P-point, because the former is closed under isomorphism, whereas the 
latter is not, by Proposition \ref{blp}.

In \zfc, one can draw the following diagram of implications
\begin{eqnarray*}
    {} & \mbox{Quasi-selective} & {}  \\
    \nearrow & {} &\searrow \\
   \mbox{Selective}\ \ \ \   &{}  & \ \ \ \  \mbox{P-point}  \\
   \searrow & {} & \nearrow \\
    {} & \mbox{Weakly Ramsey} & {}
\end{eqnarray*}

 Recall that, assuming \CH, the following facts hold:
\begin{itemize}
    \item  there exist  quasi-selective weakly Ramsey 
ultrafilters that are not selective (Theorem \ref{constr}); 

    \item the class of quasi-selective non-selective ultrafilters is 
not closed under isomorphisms (Proposition \ref{blp});

    \item  there are non-weakly-Ramsey P-points (see Theorem 2 of 
    \cite{bl74}).
\end{itemize}

It follows that, in the diagram above, no arrow can be reversed nor inserted 

The relationships between quasi-selective 
and weakly Ramsey ultrafilters are extensively studied in \cite{qswr}.
In particular, it is proved there that both quasi-selective and weakly 
Ramsey ultrafilters are  
P-points of a special kind, since they share the property that 
every function is equivalent to an
\emph{interval-to-one} function. So the question naturally arises as to 
whether this class of ``interval P-points'' is distinct from either 
one
of the other three classes.

Many weaker conditions than the Continuum Hypothesis have been 
considered in the literature, in order to get more information 
about special classes of 
ultrafilters on $\N$. Of particular interest are (in)equalities among 
the so called ``combinatorial cardinal characteristics of the
continuum''. 
(\Pes\ one has that P-points or selective 
ultrafilters are generic if $\ck=\dk$ or $\ck=\cov(\B)$, respectively..
Moreover if $\cov(\B)<\dk=\ck$ then there are filters that are included 
in P-points, but cannot be 
extended to selective ultrafilters. See the 
comprehensive survey \cite{bl1}.)
We conjecture that similar hypotheses can settle the problems 
mentioned above. 
 
\bigskip

	    It is worth mentioning that, given a quasi-selective 
	    ultrafilter $\U$, the corresponding asymptotic ``quasi-numerosity'' 
	    $\nk_{\U}$ of 
	    Theorem \ref{tperm} can be extended to all subsets of 
	    the \emph{algebraic Euclidean space} 
	    $$\QQ=\bigcup_{k\in\N^{+}} \overline{\Q}^{\,k}, \ \ 
	    \mbox{where}\ 
	    \overline{\Q}\ \mbox{is the field of all algebraic numbers.}$$
To this end,  replace the sequence of finite sets $\EE_{n}$ by
$$\QQ_{n}=\{(\ag_{1},\ldots,\ag_{k})\in\QQ\,\mid\,k\le n,\ \mbox{and}\
\exists  a_{ih}\in\Z,\, |a_{ih}|\le n, 
\sum_{0\le h\le n} a_{ih}\ag_{i}^{h}=0\,
\}.$$ 
Then extend the definition of $\U$-\emph{isometry} to maps 
$\sg:\QQ\to\QQ$ such that
    $$\{n\mid\sg[\QQ_{n}]=\QQ_{n}\}\in\U,$$
and, for $X,Y\incl\QQ$ put
$$X\eq_{\U} Y\ \  \Iff\ \ \mbox{there 
     exists a $\U$-isometry  
    $\sg$ such that}\ \sg[X]=Y.$$

    Remark that the sequence $\la\, 
    |\QQ_{n}|\,\ra_{{n\in\N}}$ belongs 
    to $F_{\U}$, since it is bounded by $\la\, 
    n^{3n^{2}}\,\ra_{{n\in\N}}\,$, 
    say. Hence 
one obtains the following natural extension of Theorem \ref{tperm}.

\begin{thm}\label{alg}
    The relation  $\eq_{\U}$ is an equivalence on $\P(\QQ)$ that satisfies 
    the properties 
    \emph{\easy,\euno},\emph{\edue}, and, when restricted to 
    $\bigcup_{k\in\N}\P(\overline{\Q}^{k})$, also \emph{\etre}\ 
    and \emph{\ecinque}.\\
    The map $$\nku(X)\mapsto\, 
    [\la\, |X\cap \QQ_{n}|\,\ra_{{n\in\N}}]_{\U} $$
    preserves sums and is an order isomorphism  between the set of 
   ``asymptotic quasi-numerosities''
   $\Nku=\P(\QQ)/\!\!\eq_{\U}$ 
    and  the initial segment $\,\TTU\incl \N\ult{\N}{\U}$ determined 
    by $\nk_{\U}(\QQ)=[\la\, |\QQ_{n}|\,\ra_{{n\in\N}}]_{\U}.$
\qed
\end{thm}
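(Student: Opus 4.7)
The plan is to mirror the proof of Theorem \ref{tperm} and Corollary \ref{tasy}, replacing $\EE_n$ by $\QQ_n$ throughout. The crucial technical input that substitutes for the polynomial-growth hypothesis underlying Theorem \ref{tult} is the remark, immediately preceding the statement, that $\la |\QQ_n|\ra \in \fu$. Since any $X\subseteq \QQ$ has cardinality function $n\mapsto |X\cap\QQ_n|$ bounded above by $|\QQ_n|$, Theorem \ref{FU}(i) makes this function $\U$-equivalent to a nondecreasing one --- the key property needed wherever the proofs of Theorem \ref{tperm} and Corollary \ref{tasy} invoke quasi-selectivity.

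The central step is to establish the cardinality characterization
\[
X \eq_{\U} Y \iff \{n \in \N : |X \cap \QQ_n| = |Y \cap \QQ_n|\} \in \U,
\]
in direct analogy with $(\sharp)$. The forward direction is immediate from the definition of $\U$-isometry. For the converse, given $W=\{n : |X\cap\QQ_n|=|Y\cap\QQ_n|\}\in\U$, I would build a $\U$-isometry $\sg:\QQ\to\QQ$ by choosing, for each pair of consecutive $m<m'$ in $W$, an arbitrary bijection sending $X\cap(\QQ_{m'}\setminus\QQ_m)$ onto $Y\cap(\QQ_{m'}\setminus\QQ_m)$ and completing it to a bijection of the finite annulus $\QQ_{m'}\setminus\QQ_m$ onto itself; assembling these yields a permutation of $\QQ$ preserving every $\QQ_n$ with $n\in W$.

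From this characterization, \euno\ and \edue\ follow exactly as in Corollary \ref{tasy}. For \easy, given $|X\cap\QQ_n|\leq|Y\cap\QQ_n|$ for all $n$, each of the sequences $|X\cap\QQ_n|$, $|Y\cap\QQ_n|$, and $|Y\cap\QQ_n|-|X\cap\QQ_n|$ is bounded by $|\QQ_n|\in\fu$ and hence $\U$-equivalent to a nondecreasing function by Theorem \ref{FU}(i); on a common $V\in\U$ where all three are nondecreasing, the greedy construction of Corollary \ref{tasy} isolates $Z\subseteq Y$ with $Z\eq_{\U}X$ by picking, between consecutive $m<m'$ in $V$, exactly $|X\cap\QQ_{m'}|-|X\cap\QQ_m|$ points of $Y\cap(\QQ_{m'}\setminus\QQ_m)$. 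For \etre\ and \ecinque\ in bounded dimension, observe that for $A\subseteq\overline{\Q}^k$ and $n\geq k$ one has $|A\cap\QQ_n|=|A\cap(\overline{\Q}\cap\QQ_n)^k|$; consequently cofinitely many $n$ satisfy $|(A\times\{P\})\cap\QQ_n|=|A\cap\QQ_n|$ (yielding \etre), and $|(A\times B)\cap\QQ_n|=|A\cap\QQ_n|\cdot|B\cap\QQ_n|$ for sufficiently large $n$ (from which \ecinque\ follows through the cardinality characterization).

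The concluding statements about $\fg_{\U}$ then follow routinely: additivity on disjoint unions, injectivity, and order preservation reduce to the cardinality characterization together with \easy, while surjectivity onto $\TTU$ comes from realizing any $\U$-nondecreasing sequence bounded above by $|\QQ_n|$ as $n\mapsto|X\cap\QQ_n|$ for a greedily built $X\subseteq\QQ$. The main obstacle I anticipate lies in the combinatorial construction in \easy: arranging for all three sequences to be simultaneously nondecreasing on a single set $V\in\U$, and thereby deploying the greedy selection of $Z$ from $Y$, hinges essentially on the membership $|\QQ_n|\in\fu$, which itself follows from the primitive recursive bound $|\QQ_n|\leq n^{3n^2}$ together with the corollary on primitive recursive rapidity at the end of Section \ref{qsu} and the evident downward closure of $\fu$ among nondecreasing functions.
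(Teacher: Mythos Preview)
Your proposal is correct and is exactly the argument the paper intends: the statement is given with a bare \qed\ after the remark that $\la|\QQ_n|\ra\in\fu$ (via the bound $|\QQ_n|\le n^{3n^2}$), and is described as ``the following natural extension of Theorem~\ref{tperm}''. Your write-up faithfully transports the proofs of Corollary~\ref{tasy} and Theorem~\ref{tperm} to the $\QQ_n$ setting, with the single substantive change being that the polynomial-growth input is replaced by Theorem~\ref{FU}(i) applied with $f=|\QQ_n|\in\fu$; one minor simplification you could make is that $|X\cap\QQ_n|$ and $|Y\cap\QQ_n|$ are already nondecreasing in $n$, so only the difference needs the appeal to $\fu$.
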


	Similar results hold for point sets over any \emph{countable} line 
	$\L$ equipped 
with a \emph{height function} $h$, 
provided that the corresponding 
function $g:\N\to\N$ defined by $g(n)=|\{x\in\L\mid h(x)\le n\,\}|$ belongs to 
the class $F_{\U}$ of Section \ref{qsu}. If this is not the case, 
one can still maintain a biunique correspondence between  
asymptotic equinumerosities and
	     ultrafilters, by restricting to 
	      ultrafilters $\U$ with the 
	     property that 
	    every function bounded by $g$ is $\U$-equivalent to a 
	    nondecreasing one (\emph{$g$-quasi-selective} ultrafilters).

The question as to whether  there exist equinumerosities which are 
not asymptotic with respect to suitable height functions is still open. 
Of particular interest 
might be the identification of equinumerosities whose existence 
is provable in \zfc\ alone. Actually, the very notion of  gauge ideal 
has been introduced in order to facilitate the investigation of these 
most general 
equinumerosity relations.

\bigskip
\bibliographystyle{amsplain}

\end{document}